\theoremstyle{thmstyleone}%
\newtheorem{theorem}{Theorem}
\theoremstyle{thmstyletwo}%
\newtheorem{remark}{Remark}%
\theoremstyle{thmstylethree}%
\newtheorem{definition}{Definition}%
\newtheorem{lemma}{Lemma}
\newtheorem{korollar}{Corollary}
\newcommand{\R}{\mathbb{R}}
\newcommand{\N}{\mathbb{N}}
\begin{document}

\title[Article Title]{Geometric Optimization of the First Robin Eigenvalue in Exterior Domains}


\author{\fnm{Lukas} \sur{Bundrock}}

\affil{\orgdiv{Department of Mathematics}, \orgname{The University of Alabama}, \orgaddress{\city{Tuscaloosa}, \postcode{AL 35487}, \country{USA},} {lbundrock@ua.edu}}


\abstract{This paper addresses the geometric optimization problem of the first Robin eigenvalue in exterior domains, specifically the lowest point of the spectrum of the Laplace operator under Robin boundary conditions in the complement of a bounded domain. In contrast to the Laplace operator on bounded domains, the spectrum of this operator is not purely discrete. The discrete nature of the first eigenvalue depends on the parameter of the Robin boundary condition. In two dimensions, D. Krejcirik and V. Lotoreichik show that the ball maximizes the first Robin eigenvalue among all smooth, bounded, simply connected sets with given perimeter or given area, provided the eigenvalue is discrete.

We extend these findings to higher dimensions. The discrete spectrum of the Laplace operator under Robin boundary conditions can be characterized through the Steklov eigenvalue problem in exterior domains, a topic studied by G. Auchmuty and Q. Han. Assuming that the lowest point of the spectrum is a discrete eigenvalue, we show that the ball is a local maximizer among nearly spherical domains with prescribed measure. However, in general, the ball does not emerge as the global maximizer for the first Robin eigenvalue under either prescribed measure or prescribed perimeter.

}

\keywords{Exterior Domain, Shape Optimization, Robin Eigenvalue, Steklov Eigenvalue, Isoperimetric Inequality}

\maketitle

\section{Introduction}\label{sec1}
Finding the optimal shape of an object to make it as efficient as possible is a naturally occurring question. In mathematical terms, this means to identify the set that minimizes a given functional among all sets that satisfy given constraints. Specifically, our focus lies in the optimization of eigenvalues of elliptic operators. In this paper, we are concerned with optimizing the lowest point of the spectrum of the Robin Laplacian,
\begin{align}\label{eq:varcharint}
\lambda_1^\alpha(\Omega) := \inf_{0 \neq u \in W^{1,2}(\Omega)}  \frac{\int_{\Omega} | \nabla u |^2 \,  \mathrm{d}x + \alpha \int_{\partial \Omega}|u|^2 \,  \mathrm{d}S}{\int_{\Omega} |u|^2 \,  \mathrm{d}x}.
\end{align}
For bounded domains, the situation is well known. For $n\geq 2$, $\alpha >0$, D. Daners shows in \cite{Daners2006} that the ball $B_R$ minimizes $\lambda_1^\alpha$ among all bounded Lipschitz domains $\Omega \subseteq \mathbb{R}^n $ with $|\Omega| = | B_R|$. For $\alpha < 0$, M. Bareket conjectures in \cite{Bareket1977} that the ball maximizes the first eigenvalue among all smooth, bounded domains with prescribed measure. This is proven not to be true in general. A counterexample is given in \cite[Theorem 1]{FREITAS2015322} by P. Freitas and D. Krejcirik. However, this counterexample does not prove the statement false if $\alpha$ is close to zero. Furthermore, it is shown in \cite[Theorem 2]{FREITAS2015322} that, for sufficiently small $| \alpha |$, the ball is a maximizer among all bounded, planar $\mathcal{C}^2$-domains.

In electromagnetic and gravitational field theories, questions often arise about the solution of boundary value problems involving the Laplacian, which includes the occurrence of such problems on unbounded domains. Thus, we are interested in studying whether similar inequalities apply to the Robin eigenvalue on the complement of a bounded domain. We define $\Omega^\text{ext} := \mathbb{R}^n \setminus \overline{\Omega}$.  The outer unit normal, pointing out of $\Omega$, is denoted by $\nu$, and $\partial_\nu u = \langle \nabla u, \nu \rangle$ is the derivative in normal direction. Throughout the paper, we assume that $\Omega^\text{ext} $ is connected. We consider the eigenvalue problem
\begin{align}\label{eq:RobinExt}
\begin{cases}
\Delta {u} + \lambda {u} = 0  & \, \text{ in } \Omega^\text{ext}, \\
-\partial_{\nu} {u}  + \alpha u=0 & \,  \text{ on } \partial \Omega,
\end{cases}
\end{align}
where $\alpha \in \mathbb{R}$. We understand \eqref{eq:RobinExt} as a spectral problem
for the self-adjoint operator  associated with the bilinear form
\begin{align}\label{eq:bilineExt}
a: W^{1,2}(\Omega^\text{ext}) \times W^{1,2}(\Omega^\text{ext}) \to \mathbb{R}, \, \, a(u,v) := \int_{\Omega^\text{ext}} \langle \nabla u, \nabla v \rangle \, \mathrm{d}x + \alpha \int_{\partial \Omega} u v \,  \mathrm{d}S.
\end{align}
This operator is called the Robin Laplacian in exterior domains and is denoted by $- \Delta_\alpha^{\Omega^\text{ext}}$. In contrast to bounded domains, the embedding $W^{1,2}(\Omega^\text{ext}) \hookrightarrow L^2(\Omega^\text{ext})$ is not compact. Thus, we can neither conclude that $-\Delta_\alpha^{\Omega^\text{ext}}$ has compact resolvent nor a purely discrete spectrum. Indeed, in \cite[Proposition 1]{krejcirik2016optimisation}, D. Krejcirik and V. Lotoreichik show that there is a nonempty essential spectrum of $-\Delta_\alpha^{\Omega^\text{ext}}$, given by
\begin{align*}
\sigma_{\text{ess}} \left( - \Delta_\alpha^{\Omega^\text{ext}} \right) = [0, \infty)
\end{align*} 
for all $\alpha \in \mathbb{R}$, $n \geq 2$ and for all bounded, smooth domains $\Omega$ such that $\Omega^\text{ext}$ is connected. However, it still makes sense to characterize the lowest point of the spectrum by
\begin{align}\label{eq:varcharext}
\lambda_1^\alpha(\Omega^\text{ext}) := \inf_{0 \neq u \in W^{1,2}(\Omega^\text{ext})}  \frac{\int_{\Omega^\text{ext}} | \nabla u |^2 \,  \mathrm{d}x + \alpha \int_{\partial \Omega}|u|^2 \,  \mathrm{d}S}{\int_{\Omega^\text{ext}} |u|^2 \,  \mathrm{d}x}.
\end{align}
If the discrete spectrum is not empty, $\lambda_1^\alpha(\Omega^\text{ext})$ is negative and coincides with the lowest eigenvalue. If there is no discrete eigenvalue, then $\lambda_1^\alpha(\Omega^\text{ext})=0$, where in this case we do not know if the infimum is attained. Provided $\lambda_1^\alpha(\Omega^\text{ext})$ is a discrete eigenvalue, D. Krejcirik and V. Lotoreichik show in \cite[Corollary 5]{krejvcivrik2020optimisation} that the ball maximizes $\lambda_1^\alpha(\Omega^\text{ext})$ in dimension $n=2$ among all smooth, bounded, simply connected open sets with given measure and among all smooth, bounded, simply connected open sets with given perimeter. In dimension $n \geq 3$ the ball does not maximizes $\lambda_1^\alpha(\Omega^\text{ext})$ anymore. Thus, for dimension $n \geq 3$, they define $\mathcal{M}(\Omega) := \frac{1}{| \partial \Omega|} \int_{\partial \Omega} M^{n-1} \, \mathrm{d}S$, where $M$ denotes the mean curvature of $\partial \Omega$, and show in  \cite[Theorem 6]{krejvcivrik2020optimisation} that the ball maximizes the first Robin eigenvalue among all convex, smooth, bounded open sets with $ \mathcal{M}(\Omega) = \mathcal{M}(B_R)$.

We show that the ball, although no longer the global maximizer, is a local maximizer of $\lambda_1^\alpha(\Omega^\text{ext})$ among all domains with prescribed measure in any dimension.

First we characterize the existence of a discrete eigenvalue of the Robin Laplacian in exterior domains. To this end, we consider for $n \geq 3$, the Steklov eigenvalue problem
\begin{align}\label{eq:StekExt}
\begin{cases}
\Delta {u}  = 0  & \, \text{ in } \Omega^\text{ext}, \\
-\partial_{\nu} {u}  = \mu {u} & \,  \text{ on } \partial \Omega.
\end{cases}
\end{align}
In \cite{Auchmuty2014}, G. Auchmuty and Q. Han illustrate that solutions of \eqref{eq:StekExt} are in general not in $L^2(\Omega^\text{ext})$. Thus, they construct the Hilbert space $E^1(\Omega^\text{ext})$, the space of finite energy functions. Considering \eqref{eq:StekExt} in $E^1(\Omega^\text{ext})$, we obtain a well posed problem. 


In Section \ref{subsc:charal}, we characterize the discrete spectrum of the Robin Laplacian in exterior domains, depending on $\alpha$. We show that $\lambda_1^\alpha(\Omega^\text{ext})$ is a discrete eigenvalue if and only if $\alpha < - \mu_1(\Omega^\text{ext})$, where $\mu_1(\Omega^\text{ext})$ denotes the first eigenvalue of \eqref{eq:StekExt}. Provided that $\lambda_1^\alpha(\Omega^\text{ext})$ is a discrete eigenvalue, we are interested in optimizing $\lambda_1^\alpha(\Omega^\text{ext})$  among all domains of given measure or given perimeter. 

Before we discuss this geometric optimization problem, we demonstrate in Section \ref{sec:cont} the continuity of $\lambda_1^\alpha(\Omega^\text{ext})$ and $\mu_1(\Omega^\text{ext})$ with respect to the Hausdorff metric, which implies the existence of an optimal domain in a suitable collection of subsets of $\mathbb{R}^n$. Furthermore, we establish in Section \ref{sec:mon}, a monotonicity result for $\lambda_1^\alpha(\Omega^\text{ext})$ and $\mu_1(\Omega^\text{ext})$ concerning a specific type of domain inclusion. This implies that achieving optimality within a given perimeter results in optimality within a given measure.

Comparing $\lambda_1^\alpha(B^\text{ext})$ with $\lambda_1^\alpha(E^\text{ext})$, where $B$ is a ball and $E$ is an ellipsoid with equal measure, we demonstrate in Section \ref{sec:glob} that the ball cannot be the global maximizer of $\lambda_1^\alpha(\Omega^\text{ext})$ among all smooth domains with given measure. Leveraging the monotonicity discussed in Section \ref{sec:mon}, this extends to the ball not being the global maximizer among smooth domains with a prescribed perimeter.

Lacking a global approach, we study how local perturbations affect the eigenvalue. By proceeding similar to \cite{Wagner1}, as described in Section \ref{sec:DomVarInt}, we derive formulas for the first and second variations of $\lambda_1^\alpha(\Omega^\text{ext})$ in  Section \ref{subsubsec:Eig}. Using the second variation, we show in Section \ref{signofsec} that the ball maximizes $\lambda_1^\alpha(\Omega^\text{ext})$ locally among all nearly spherical domains with prescribed measure. To accomplish this, we establish in section \ref{sec:ineqbess} a new inequality, to the best of our knowledge, for the ratio of Bessel functions. In Section \ref{sec:quantext}, we derive a quantitative inequality.

Some results of this paper have been published previously in \cite{BundPhD}.

\section{Basic Properties of the Robin Eigenvalue in Exterior Domains}\label{sec:1}
If $\Omega$ is a bounded domain, it holds $\lambda_1^\alpha(\Omega) < 0$ if and only if $\alpha < 0$. This can easily be seen by inserting a constant test function into \eqref{eq:varcharint}. Thus, one might guess that for $\alpha < 0$ it holds $\lambda_1^\alpha(\Omega^{\text{ext}}) < 0$. It turns out that this condition is not sufficient in general. In \cite[Proposition 2]{krejvcivrik2020optimisation}, it is shown that there exists a nonpositive constant $\alpha^*(\Omega^{\text{ext}})$ such that $\lambda_1^\alpha(\Omega^{\text{ext}})$ is a discrete, negative eigenvalue of \eqref{eq:RobinExt} if and only if $ \alpha < \alpha^*(\Omega^{\text{ext}})$. More precisely, it holds
\begin{align*}
\begin{cases}
\alpha^*(\Omega^{\text{ext}}) = 0 \text{ for } n \in \{ 1,2 \},\\
\alpha^*(\Omega^{\text{ext}}) < 0 \text{ for } n \geq 3.
\end{cases}
\end{align*}
If $\alpha < \alpha^*(\Omega^{\text{ext}})$, i.e. $\lambda_1^\alpha(\Omega^{\text{ext}})$ is a discrete eigenvalue, it can be shown by standard methods that $\lambda_1^\alpha(\Omega^{\text{ext}})$ is simple and the corresponding eigenfunction can be chosen strictly positive. Since these properties are important for further calculations, we start by characterizing $\alpha^*(\Omega^{\text{ext}})$ for $n \geq 3$ in more detail. The results of Section \ref{subsc:charal} have been published beforehand in \cite[Section 2]{BundPhD}.

\subsection{Characterization of $\alpha^*(\Omega^{\text{ext}})$}\label{subsc:charal}

As in \citep[Proposition 2.1]{AntunesFK}, it can be shown that $\lambda_1^\alpha(\Omega^{\text{ext}})$ is continuous in $\alpha$. Combined with $\lambda_1^\alpha(\Omega^{\text{ext}}) < 0 \Leftrightarrow \alpha < \alpha^*(\Omega^{\text{ext}})  $, this means that
\begin{align*}
\lambda_1^{\alpha^*(\Omega^{\text{ext}})}(\Omega^{\text{ext}}) = \inf_{0 \neq u \in W^{1,2}(\Omega^{\text{ext}})} \frac{\int_{\Omega^{\text{ext}}} | \nabla u |^2 \,  \mathrm{d}x + \alpha^*(\Omega^{\text{ext}}) \int_{\partial \Omega}|u|^2\,  \mathrm{d}S}{\int_{\Omega^{\text{ext}}} |u|^2 \,  \mathrm{d}x} = 0.
\end{align*}
Because $0 \in \sigma_\text{ess}\left( - \Delta_\alpha^{\Omega^\text{ext}} \right)$, we don't know if the infimum is attained. Nevertheless, for all $ u \in W^{1,2}(\Omega^{\text{ext}})$, it holds
\begin{align*}
 0 \leq \int_{\Omega^{\text{ext}}} | \nabla u |^2 \,  \mathrm{d}x + \alpha^*(\Omega^{\text{ext}}) \int_{\partial \Omega}|u|^2\,  \mathrm{d}S,
\end{align*}
which suggests that $-\alpha^*(\Omega^{\text{ext}})$ could potentially be a Steklov eigenvalue. 

A. Auchmuty and Q. Han consider in \cite{Auchmuty2014} the exterior harmonic Steklov eigenvalue problem \eqref{eq:StekExt} for dimensions $n \geq 3$. For $\Omega = B_R$, the radial solutions of \eqref{eq:StekExt} are of the form $u(x) = c_1 + c_2 |x|^{2-n}$. Here, $u \in L^2(B_R^{\text{ext}})$ does not hold true in general. To solve this problem, they define the space of  {finite energy} functions $E^1(\Omega^{\text{ext}})$ to be the subspace of $L^1_{\text{loc}}(\Omega^\text{ext})$ of functions that satisfy
\vspace{1ex}
\begin{enumerate}
\item $\nabla u \in L^2(\Omega^{\text{ext}};\R^n)$,
\item $u$ decays at infinity which means $S_c := \{ x \in \Omega^{\text{ext}}: | u(x) | \geq c \}$ has finite measure for all $c > 0$.
\end{enumerate}
\vspace{1ex}
In addition, $E^1(\Omega^\text{ext})$ is a Hilbert space with respect to the inner product
\begin{align*}
\langle f,g \rangle_{E^1(\Omega^{\text{ext}})} := \int_{\Omega^{\text{ext}}} \langle \nabla f, \nabla g \rangle \,  \mathrm{d}x + \frac{1}{| \partial \Omega|} \int_{\partial \Omega} f g\,  \mathrm{d}S.
\end{align*}
Furthermore, they prove that the boundary trace operator $\gamma: E^1(\Omega^{\text{ext}}) \to L^2(\partial \Omega)$ is compact. Using the methods of \cite[Chapter 8]{GilbargTrudinger}, it can be shown that the first Steklov eigenvalue $\mu_1$ is simple and its corresponding eigenfunction $u_1 \in E^1(\Omega^{\text{ext}})$ is of constant sign. Moreover, since $\gamma: E^1(\Omega^{\text{ext}}) \to L^2(\partial \Omega)$ is compact, the spectrum is purely discrete and we have a sequence of eigenvalues
\begin{align*}
\mu_1(\Omega^{\text{ext}}) \leq \mu_2(\Omega^{\text{ext}}) \leq \ldots,
\end{align*}
accumulating at $\infty$. On the basis of these properties, we can characterize $\alpha^*(\Omega^{\text{ext}})$.
\vspace{1ex}
\begin{theorem}\label{satz:alpha*var'}
Let $\mu_1(\Omega^{\text{ext}})$ be the first eigenvalue of \eqref{eq:StekExt}. Then, $ \alpha^*(\Omega^{\text{ext}}) = -\mu_1(\Omega^{\text{ext}})$.
\end{theorem}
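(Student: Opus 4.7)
The plan is to prove the equality $\alpha^*(\Omega^{\text{ext}}) = -\mu_1(\Omega^{\text{ext}})$ by two matching inequalities, exploiting the dichotomy $\lambda_1^\alpha(\Omega^{\text{ext}}) < 0 \iff \alpha < \alpha^*(\Omega^{\text{ext}})$ together with the variational characterization of $\mu_1$ on the Hilbert space $E^1(\Omega^{\text{ext}})$. The bridge between the two problems is the inclusion $W^{1,2}(\Omega^{\text{ext}}) \subseteq E^1(\Omega^{\text{ext}})$: for $u \in W^{1,2}(\Omega^{\text{ext}})$, $\nabla u \in L^2$ by definition, and Chebyshev's inequality yields $|\{|u| \geq c\}| \leq c^{-2} \|u\|_{L^2}^2 < \infty$, so $u$ decays at infinity in the Auchmuty--Han sense.

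To show $\alpha^*(\Omega^{\text{ext}}) \leq -\mu_1(\Omega^{\text{ext}})$, I would apply the Steklov variational characterization to an arbitrary $u \in W^{1,2}(\Omega^{\text{ext}})$, which is permitted by the inclusion above, to obtain
\begin{align*}
\int_{\Omega^{\text{ext}}} |\nabla u|^2 \, \mathrm{d}x \geq \mu_1(\Omega^{\text{ext}}) \int_{\partial \Omega} u^2 \, \mathrm{d}S.
\end{align*}
Setting $\alpha = -\mu_1(\Omega^{\text{ext}})$, this says that the numerator of the Rayleigh quotient defining $\lambda_1^\alpha(\Omega^{\text{ext}})$ is nonnegative, so $\lambda_1^{-\mu_1(\Omega^{\text{ext}})}(\Omega^{\text{ext}}) \geq 0$, and the dichotomy forces $-\mu_1(\Omega^{\text{ext}}) \geq \alpha^*(\Omega^{\text{ext}})$.

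For the reverse inequality, I would fix $\alpha < -\mu_1(\Omega^{\text{ext}})$ and construct a $W^{1,2}$ test function with negative Robin Rayleigh quotient. The natural candidate is the truncation $v_R := \chi_R u_1$ of the first Steklov eigenfunction $u_1 \in E^1(\Omega^{\text{ext}})$, where $\chi_R$ is a smooth cut-off equal to $1$ on a ball $B_R \supset \overline{\Omega}$, vanishing outside $B_{2R}$, with $|\nabla \chi_R| \leq C/R$. Since $\chi_R \equiv 1$ on $\partial \Omega$, the boundary term is unchanged, and using $\int |\nabla u_1|^2 \, \mathrm{d}x = \mu_1 \int_{\partial \Omega} u_1^2 \, \mathrm{d}S$ together with dominated convergence, the numerator of the Rayleigh quotient of $v_R$ converges to $(\mu_1(\Omega^{\text{ext}}) + \alpha) \int_{\partial \Omega} u_1^2 \, \mathrm{d}S < 0$. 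For sufficiently large $R$ this gives $\lambda_1^\alpha(\Omega^{\text{ext}}) < 0$, hence $\alpha < \alpha^*(\Omega^{\text{ext}})$, and letting $\alpha \uparrow -\mu_1(\Omega^{\text{ext}})$ yields $-\mu_1(\Omega^{\text{ext}}) \leq \alpha^*(\Omega^{\text{ext}})$.

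The main obstacle is the cut-off estimate $\int_{\Omega^{\text{ext}}} u_1^2 |\nabla \chi_R|^2 \, \mathrm{d}x \to 0$ as $R \to \infty$, which controls the gradient error introduced by the truncation and, via Cauchy--Schwarz, the cross terms as well. This is delicate precisely because $u_1 \notin L^2(\Omega^{\text{ext}})$ in dimensions $n \in \{3,4\}$. I would overcome it by invoking the decay of exterior harmonic functions of finite Dirichlet energy, namely $u_1(x) = O(|x|^{2-n})$ as $|x| \to \infty$: on the annulus $R \leq |x| \leq 2R$ of volume $O(R^n)$, the integrand is bounded by $C R^{-2} |x|^{4-2n} = O(R^{2-2n})$, yielding a total bound of $O(R^{2-n})$, which vanishes for $n \geq 3$.
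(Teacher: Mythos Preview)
Your proof is correct and follows the same two-inequality structure as the paper: the inclusion $W^{1,2}(\Omega^{\text{ext}}) \subseteq E^1(\Omega^{\text{ext}})$ gives $\alpha^*(\Omega^{\text{ext}}) \leq -\mu_1(\Omega^{\text{ext}})$, and a cut-off of the Steklov eigenfunction $u_1$ produces a $W^{1,2}$ test function with negative Robin numerator for any $\alpha < -\mu_1(\Omega^{\text{ext}})$.

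The one substantive difference is how you justify the truncation. The paper does not use the pointwise decay $u_1(x) = O(|x|^{2-n})$; instead it invokes a result of Lu and Ou stating that any $u \in W^{1,2}_{\mathrm{loc}}$ with $\nabla u \in L^2$ can, after subtracting its average at infinity $(u)_\infty$, be approximated in the Dirichlet seminorm by compactly supported functions, and then checks $(u_1)_\infty = 0$ via the embedding $E^1(\Omega^{\text{ext}}) \subset L^{2n/(n-2)}(\Omega^{\text{ext}})$ and H\"older's inequality. Your route is more elementary and self-contained, but it is specific to harmonic $u_1$: the estimate $u_1 = O(|x|^{2-n})$ relies on the exterior spherical-harmonic expansion (or Kelvin transform) of a harmonic function with finite Dirichlet energy, whereas the paper's argument works for any $E^1$ function. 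Both lead to the same conclusion $\int |\nabla(\chi_R u_1)|^2 \to \int |\nabla u_1|^2$, and from there the two proofs are identical.
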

\begin{proof}
To prove the theorem, we show the two inequalities $\alpha^*(\Omega^{\text{ext}}) \leq -\mu_1(\Omega^{\text{ext}})$ and $\alpha^*(\Omega^{\text{ext}}) \geq -\mu_1(\Omega^{\text{ext}})$.

Beginning with the first inequality, we can characterize $\mu_1(\Omega^{\text{ext}})$ as
\begin{align}\label{eq:varcharstek}
\mu_1(\Omega^{\text{ext}}) = \inf_{\substack{u \in E^1(\Omega^{\text{ext}}), \\ ||u||^2_{L^2(\partial \Omega)}=1}}   || \nabla u ||^2_{L^2(\Omega^{\text{ext}})}
\end{align}
and since $W^{1,2}(\Omega^{\text{ext}}) \subseteq  E^1(\Omega^{\text{ext}})$, we immediately obtain
\begin{align*}
-\mu_1(\Omega^{\text{ext}}) \geq -\inf_{\substack{u \in W^{1,2}(\Omega^{\text{ext}}), \\ ||u||^2_{L^2(\partial \Omega)}=1}}   || \nabla u ||^2_{L^2(\Omega^{\text{ext}})}.
\end{align*}
This implies $\alpha^*(\Omega^{\text{ext}}) \leq -\mu_1(\Omega^{\text{ext}})$.

To prove the reversed inequality, we approximate the first eigenfunction of the Steklov eigenvalue using functions with compact support. Let $u_1$ be the first eigenfunction of \eqref{eq:StekExt}, with $u_1 \in E^1(\Omega^{\text{ext}})$ and $||u_1||_{L^2(\partial \Omega)} = 1$. Then,
\begin{align*}
0 = \int_{\Omega^{\text{ext}}} | \nabla u_1  |^2 \,  \mathrm{d}x -\mu_1(\Omega^\text{ext}).
\end{align*}
To approximate $u_1$ by smooth functions with compact support, we utilize a result from G. Lu and B. Ou, proven in \cite[Proof of Theorem 5.2, Proposition 2.2]{LuOu}. For a function $u \in W^{1,2}_{\text{loc}}(\Omega^{\text{ext}})$ with $| \nabla u | \in L^2(\Omega^{\text{ext}})$, they define
\begin{align*}
 \left( u \right)_\infty := \lim_{R \to \infty} \frac{1}{|\Omega^{\text{ext}} \cap B_R|} \int_{\Omega^{\text{ext}} \cap B_R} u \,  \mathrm{d}x
\end{align*}
and show that  $w := u - \left( u \right)_\infty$ can be approximated by smooth functions, i.e.  for every $\varepsilon > 0$ there exists $\psi_R \in C_0^\infty(\mathbb{R}^n)$  with $\psi_R(x) = 1$ for $|x|<R$, such that
\begin{align*}
\int_{\Omega^{\text{ext}}} | \nabla w - \nabla (w \,  \psi_R )|^2  \,  \mathrm{d}x < \varepsilon.
\end{align*}
Since $E^1(\Omega^{\text{ext}}) \subset  L^\frac{2n}{n-2}(\Omega^{\text{ext}})$ (see \cite[Corollary 3.4]{Auchmuty2014}), we obtain
\begin{align*}
\frac{ \int_{\Omega^{\text{ext}} \cap B_R} |u_1| \,  \mathrm{d}x}{|\Omega^{\text{ext}} \cap B_R|} &\leq \frac{1}{|\Omega^{\text{ext}} \cap B_R|} \left(\int_{\Omega^{\text{ext}} \cap B_R} |u_1|^\frac{2n}{n-2} \,  \mathrm{d}x \right)^\frac{n-2}{2n} \left(\int_{\Omega^{\text{ext}} \cap B_R} 1 \,  \mathrm{d}x \right)^\frac{n+2}{2n} \\
&= |\Omega^{\text{ext}} \cap B_R|^{\frac{n+2}{2n}-1} \left(\int_{\Omega^{\text{ext}} \cap B_R} |u_1|^\frac{2n}{n-2} \,  \mathrm{d}x \right)^\frac{n-2}{2n}.
\end{align*}
Since $\frac{n+2}{2n}-1 < 0$, it holds $\lim_{R \to \infty} |\Omega^{\text{ext}} \cap B_R|^{\frac{n+2}{2n}-1} = 0$. Also, since $u_1  \in L^\frac{2n}{n-2}(\Omega^{\text{ext}})$, we have
\begin{align*}
 \lim_{R \to \infty}\left(\int_{\Omega^{\text{ext}} \cap B_R} |u_1|^\frac{2n}{n-2} \,  \mathrm{d}x \right)^\frac{n-2}{2n} = ||u_1||_{L^\frac{2n}{n-2}(\Omega^{\text{ext}})}.
\end{align*}
Thus, $u_1 - \left( u_1 \right)_\infty = u_1$ can be approximated by $u_1 \psi_R$. By choosing $R$ large enough, such that $\phi(x) = 1 $ for all $x \in \partial \Omega$, we have
\begin{align*}
\int_{\partial \Omega} (u_1 \, \psi_R )^2\,  \mathrm{d}S = \int_{\partial \Omega} u_1^2\,  \mathrm{d}S = 1.
\end{align*}
For  $\alpha < -\mu_1(\Omega^\text{ext})$, we can choose $R$ large enough such that $\phi := u_1 \,  \psi_R \in W^{1,2}(\Omega^{\text{ext}})$ satisfies
\begin{align*}
\int_{\Omega^{\text{ext}}} | \nabla \phi  |^2 \,  \mathrm{d}x -\mu_1(\Omega^\text{ext}) = \int_{\Omega^{\text{ext}}} | \nabla \phi  |^2  - | \nabla u_1  |^2 \,  \mathrm{d}x <  \frac{-\mu_1(\Omega^\text{ext}) - \alpha}{2} =: \widehat{\varepsilon}(\alpha).
\end{align*}
Therefore, we obtain
\begin{align*}
\int_{\Omega^{\text{ext}}} | \nabla \phi  |^2 \,  \mathrm{d}x + \alpha \int_{ \partial \Omega} | \phi  |^2\,  \mathrm{d}S < \widehat{\varepsilon}(\alpha) +\mu_1(\Omega^\text{ext}) + \alpha = \frac{\mu_1(\Omega^\text{ext}) + \alpha}{2} < 0.
\end{align*}
Thus, $\alpha < -\mu_1(\Omega^\text{ext})$ implies $\lambda_1^\alpha(\Omega^{\text{ext}}) < 0$. Hence, $\alpha^*(\Omega^{\text{ext}}) = - \mu_1(\Omega^{\text{ext}})$.
\end{proof}

Analogous calculations for higher eigenvalues lead to the following corollary.

\vspace{1ex}
\begin{korollar}\label{satz:szek-rob-zusammenhang}
Let $\mu_k(\Omega^{\text{ext}})$ denote the $k$-th eigenvalue of \eqref{eq:StekExt}. The Robin Laplacian has $k$ discrete eigenvalues if and only if $\alpha < - \mu_k(\Omega^{\text{ext}})$.
\end{korollar}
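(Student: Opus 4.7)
The plan is to upgrade the single-function argument of Theorem~\ref{satz:alpha*var'} to a $k$-dimensional min-max argument, exploiting the Courant-Fischer characterizations of both $\lambda_k^\alpha(\Omega^{\text{ext}})$ and $\mu_k(\Omega^{\text{ext}})$. The Robin Laplacian has at least $k$ discrete eigenvalues below the essential spectrum $[0,\infty)$ if and only if there exists a $k$-dimensional subspace $V \subset W^{1,2}(\Omega^{\text{ext}})$ on which the quadratic form $Q_\alpha(u) := \int_{\Omega^{\text{ext}}} |\nabla u|^2 \,\mathrm{d}x + \alpha \int_{\partial\Omega} |u|^2 \,\mathrm{d}S$ is strictly negative, while the Steklov eigenvalue admits the min-max characterization over $k$-dimensional subspaces of $E^1(\Omega^{\text{ext}})$ with Rayleigh quotient $\|\nabla u\|_{L^2(\Omega^{\text{ext}})}^2 / \|u\|_{L^2(\partial\Omega)}^2$.

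For the direction $\alpha < -\mu_k(\Omega^{\text{ext}}) \Rightarrow k$ discrete eigenvalues, I would take the first $k$ Steklov eigenfunctions $u_1, \ldots, u_k \in E^1(\Omega^{\text{ext}})$, orthonormalized in $L^2(\partial\Omega)$. Apply the Lu-Ou approximation used in the theorem to each $u_j$ individually: the argument that $(u_j)_\infty = 0$ goes through verbatim, since every $u_j$ lies in $L^{2n/(n-2)}(\Omega^{\text{ext}})$ by \cite[Corollary 3.4]{Auchmuty2014}. Choosing a common cutoff $\psi_R \in C_0^\infty(\mathbb{R}^n)$ with $\psi_R \equiv 1$ on a neighborhood of $\partial\Omega$, the truncations $\phi_j := u_j \psi_R$ lie in $W^{1,2}(\Omega^{\text{ext}})$ and preserve the boundary trace, so $\int_{\partial\Omega}\phi_j \phi_l \,\mathrm{d}S = \delta_{jl}$. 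For $R$ large, the $k \times k$ Gram matrix $\bigl(\int_{\Omega^{\text{ext}}} \langle \nabla \phi_j, \nabla \phi_l\rangle\bigr)_{j,l}$ is arbitrarily close to $\bigl(\int_{\Omega^{\text{ext}}} \langle \nabla u_j, \nabla u_l\rangle\bigr)_{j,l}$, whose largest eigenvalue is bounded by $\mu_k(\Omega^{\text{ext}})$. By continuity of eigenvalues of symmetric matrices in their entries, the supremum of the Rayleigh quotient over $\mathrm{span}(\phi_1,\ldots,\phi_k)$ stays strictly below $-\alpha$, so $Q_\alpha$ is strictly negative on this $k$-dimensional subspace.

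For the reverse direction, suppose the Robin Laplacian has $k$ discrete eigenvalues and let $v_1, \ldots, v_k \in W^{1,2}(\Omega^{\text{ext}})$ denote the corresponding eigenfunctions, which span a $k$-dimensional subspace on which $Q_\alpha < 0$. Since $W^{1,2}(\Omega^{\text{ext}}) \subset E^1(\Omega^{\text{ext}})$, the same subspace is admissible for the Steklov min-max. No nonzero element of this span can have vanishing trace: such an element would satisfy both $\partial_\nu v = 0$ (from the Robin condition together with $v = 0$) and $v = 0$ on $\partial\Omega$, so by unique continuation $v \equiv 0$. Rewriting $Q_\alpha(v) < 0$ as $\|\nabla v\|_{L^2(\Omega^{\text{ext}})}^2 < -\alpha \|v\|_{L^2(\partial\Omega)}^2$ for every nonzero $v$ in the span yields $\mu_k(\Omega^{\text{ext}}) < -\alpha$ via the min-max formula.

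The main obstacle is in the forward direction: a single-function truncation argument is not enough, because the supremum over a $k$-dimensional space need not be controlled merely by bounding the Rayleigh quotient on each basis element. Handling this requires tracking the full $k \times k$ Gram matrices under truncation and invoking continuity of their eigenvalues, which is essentially the only new ingredient beyond what Theorem~\ref{satz:alpha*var'} already supplies.
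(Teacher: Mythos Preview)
Your approach is the natural min--max extension of Theorem~\ref{satz:alpha*var'} and is essentially what the paper intends by ``analogous calculations''; no separate proof is given there. The Gram-matrix argument in the forward direction is exactly the right upgrade: the weak formulation of the Steklov problem gives $\int_{\Omega^{\text{ext}}}\langle\nabla u_j,\nabla u_l\rangle\,\mathrm{d}x = \mu_j\,\delta_{jl}$, so the original Gram matrix is $\mathrm{diag}(\mu_1,\ldots,\mu_k)$ and continuity of the eigenvalues of the truncated Gram matrix does the job.

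There is one small gap in the reverse direction. The unique-continuation step does not work as written: a linear combination $v=\sum_j c_j v_j$ of Robin eigenfunctions with possibly \emph{distinct} eigenvalues $\lambda_j$ does not satisfy a single second-order elliptic equation, so the Cauchy uniqueness theorem is not directly applicable. Fortunately this step is unnecessary. You have already (correctly) asserted that $Q_\alpha$ is strictly negative on $\mathrm{span}(v_1,\ldots,v_k)$; indeed $Q_\alpha(v)=\sum_j c_j^2\lambda_j<0$ by $L^2(\Omega^{\text{ext}})$-orthogonality of the eigenfunctions. If some nonzero $v$ in that span had vanishing boundary trace, then $Q_\alpha(v)=\|\nabla v\|_{L^2(\Omega^{\text{ext}})}^2\geq 0$, a contradiction. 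Hence the trace is automatically nonzero on the whole span, and your min--max conclusion $\mu_k(\Omega^{\text{ext}})<-\alpha$ follows.
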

\vspace{1ex}

As an example, we consider $\Omega=B_R$. Here, it is possible to find an explicit formula for $\alpha^*(\Omega^{\text{ext}})$ (see \cite[Proposition 3]{krejvcivrik2020optimisation}): For $\alpha < \alpha^*(B_R^{\text{ext}} )$ the first eigenvalue $\lambda_1^\alpha(B_R^{\text{ext}})$ is negative and simple, so the corresponding eigenfunction has to be radial. Thus, \eqref{eq:RobinExt} becomes
\begin{align}\label{eq:problemB_Rext}
\begin{cases}
u''(r) + \frac{n-1}{r} u'(r) + \lambda {u}(r) = 0 \,  & \text{ for } r \in (R,\infty), \\
-{u'}(r) + \alpha {u}(r) = 0 \,  &\text{ for } r=R,
\end{cases}
\end{align}
where $u'$ denotes the derivative of $u$ with respect to $r$. For $n \geq 2$, the solution of \eqref{eq:problemB_Rext} is given by
\begin{align}\label{eq:uExtBall}
u(r)= c \, r^{-\frac{n-2}{2}} K_{\frac{n-2}{2}}(r \sqrt{-\lambda}), \quad c \in \mathbb{R},
\end{align}
where $K_m$ denotes the modified Bessel function of second kind (see Appendix \ref{secA1}). The boundary condition yields a relation between $\alpha$ and $\lambda$. Using \eqref{eq:Bessediff}, we obtain
\begin{align*}
\alpha &= \frac{u'(R)}{u(R)} 
= -\sqrt{-\lambda} \frac{  K_{\frac{n}{2}}(R \sqrt{-\lambda}) }{  K_{\frac{n-2}{2}}(R \sqrt{-\lambda})}.
\end{align*}
This equation indeed provides a unique relation between $\alpha$ and $\lambda$. To prove this, we define for $m = \frac{k}{2}, \, k \in \mathbb{N}$, the mapping
\begin{align*}
f_m:(0,\infty) \to \R, \, \, z \mapsto -z \frac{  K_{m+1}(z) }{  K_{m}(z)}
\end{align*}
and show that $f_m$ is strictly monotonically decreasing. Since $R \, \alpha = f_{\frac{n-2}{2}}\left( R \sqrt{-\lambda} \right)$, this yields the desired uniqueness between $\alpha$ and $\lambda$. Using \eqref{eq:Bessediff} and \eqref{eq:RecK}, we obtain
\begin{align*}
\frac{\mathrm{d}}{\mathrm{d}z}f_m(z) = \frac{z^2 + 2m \frac{zK_{m+1}(z)}{K_m(z)} - \left(\frac{zK_{m+1}(z)}{K_m(z)} \right)^2}{z}.
\end{align*}
From \eqref{eq:segura}, we obtain $\frac{zK_{m+1}(z)}{K_m(z)} \geq m + \sqrt{m^2+z^2} > 2m$. Furthermore, the mapping $x \mapsto 2mx-x^2$ is strictly monotonically decreasing on $(m, \infty)$ and we conclude
\begin{align}\label{eq:mondecf}
\frac{\mathrm{d}}{\mathrm{d}z}f_m(z) &< \frac{z^2 + 2m \left( m + \sqrt{m^2+z^2} \right) - \left( m + \sqrt{m^2+z^2} \right)^2}{z} = 0.
\end{align}
Thus, the mapping
\begin{align*}
\lambda \mapsto-\sqrt{-\lambda} \frac{  K_{\frac{n}{2}}(R \sqrt{-\lambda}) }{  K_{\frac{n-2}{2}}(R \sqrt{-\lambda})}
\end{align*}
is strictly monotonically increasing for $\lambda \in (-\infty,0)$. For $R=1$ and different $n$, this relation is shown below in Figure \ref{fig:f1}. Since it seems more natural to express $\lambda$ depending on $\alpha$, we also plot the inverse relation in Figure \ref{fig:f-1}.
\begin{figure}[h]
\begin{subfigure}{0.5\textwidth}
\centering
\includegraphics[trim = 10mm 110mm 50mm 20mm, width=\textwidth]{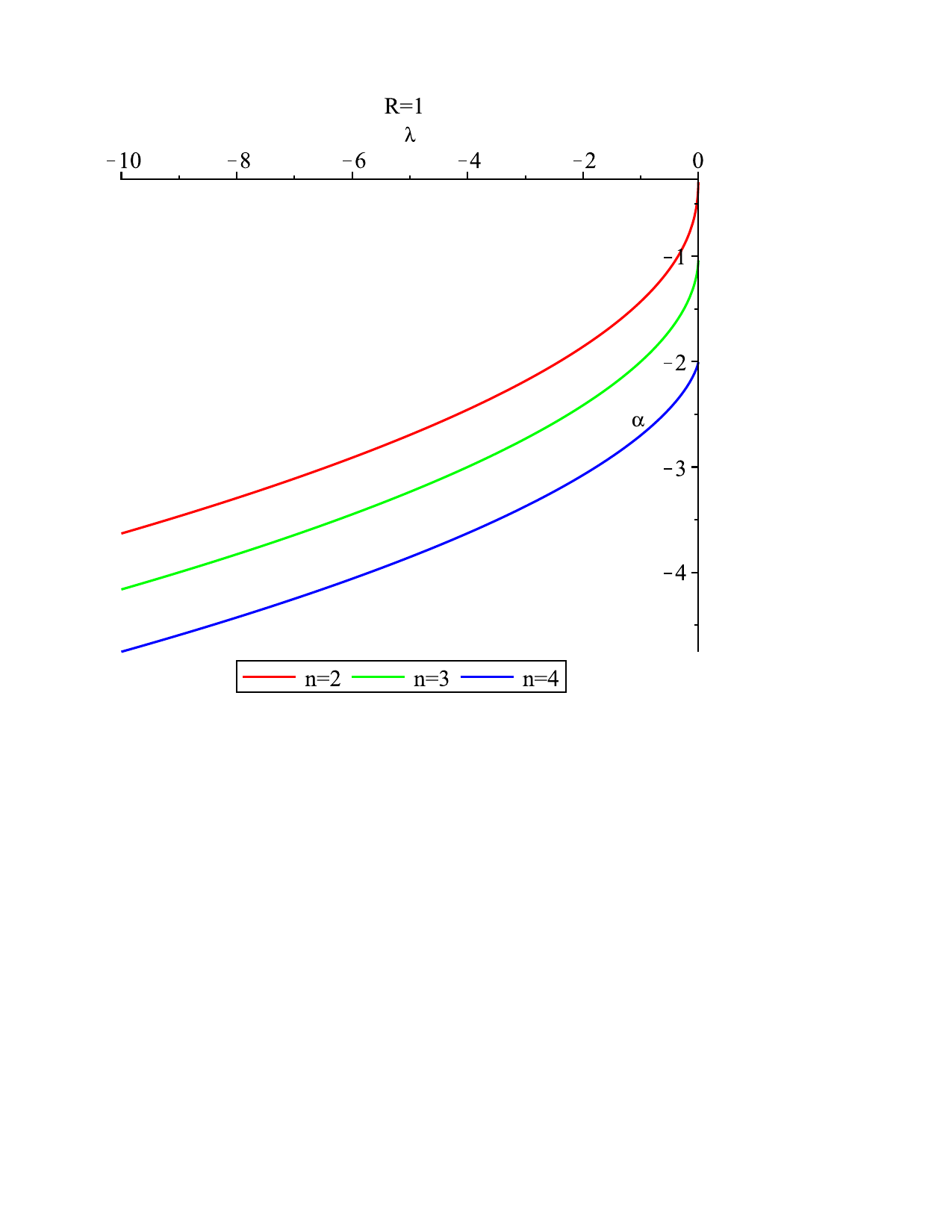}
\caption{$\alpha$ depending on $\lambda_1$}
\label{fig:f1}
\end{subfigure}
\begin{subfigure}{0.5\textwidth}
\centering
\includegraphics[trim = 10mm 110mm 50mm 20mm, width=\textwidth]{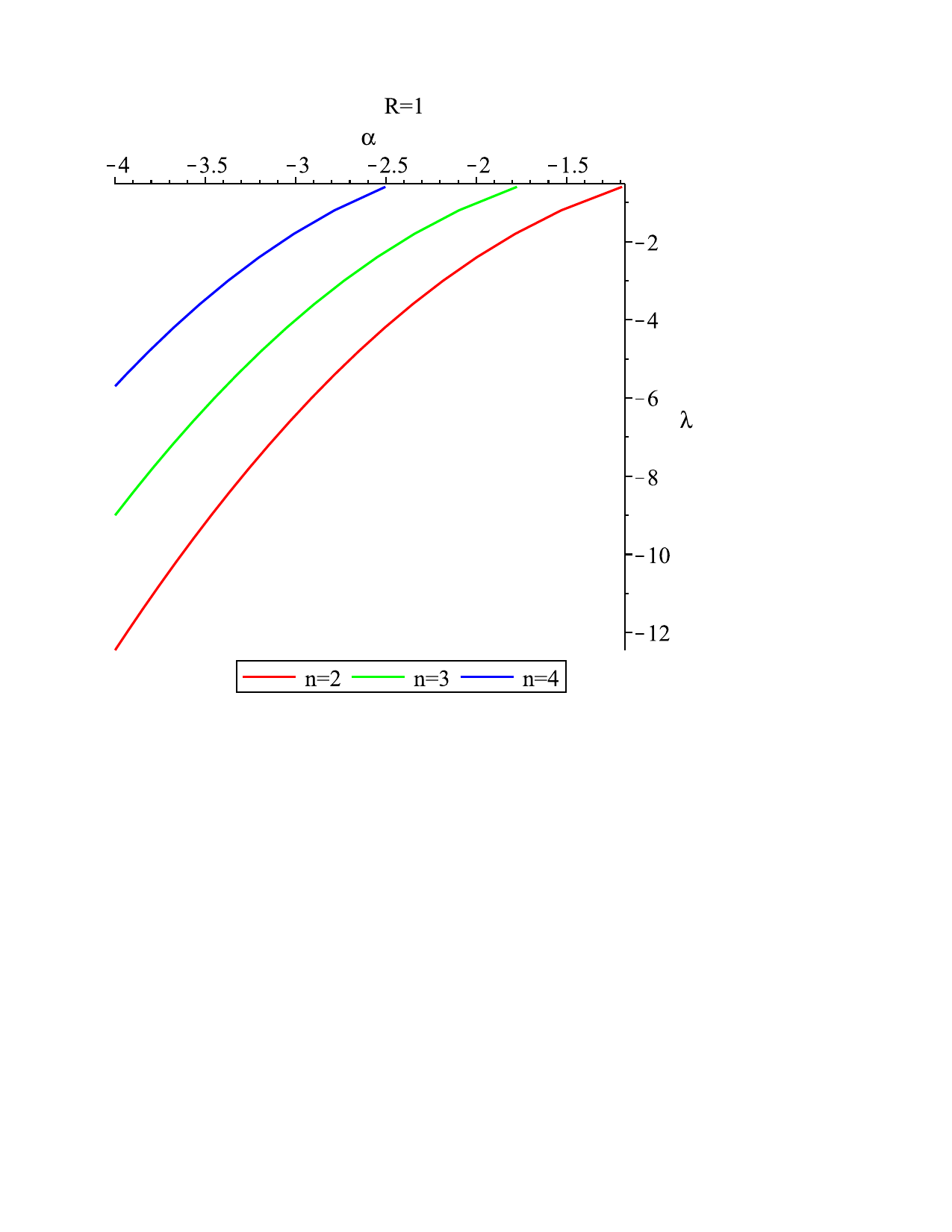}
\caption{$\lambda_1$ depending on $\alpha$}
\label{fig:f-1}
\end{subfigure}
\caption{Relation between $\alpha$ and $\lambda_1$ for the $n$-dimensional unit ball.}
\label{fig:image2}
\end{figure}

To determine $\alpha^*$, we have to determine $\lim_{\lambda \to 0} \alpha(\lambda)$. We use the asymptotic behavior of the Bessel functions, \eqref{eq:BesselKasymp0} and \eqref{eq:BesselKasympinf} and obtain for $n \geq 3$, 
\begin{align*}
\lim_{\lambda \to 0} \, \alpha(\lambda) &= \lim_{\lambda \to 0} \, \frac{f_{\frac{n-2}{2}}(R \sqrt{-\lambda})}{R} = \lim_{\lambda \to 0} \, -\sqrt{-\lambda} \frac{  \frac{\Gamma(\frac{n}{2}) 2^{\frac{n-2}{2}}}{\left( R \sqrt{-\lambda}) \right)^\frac{n}{2}} }{  \frac{\Gamma(\frac{n-2}{2}) 2^{\frac{n-4}{2}}}{\left( R \sqrt{-\lambda}) \right)^\frac{n-2}{2}} } = -\frac{n-2}{R}
\end{align*}
and in view of the monotonicity, we obtain $\lambda <0  \,\Leftrightarrow  \, \alpha < -\frac{n-2}{R}$, which implies
\begin{align}\label{eq:asymptoticsalpha}
\alpha^*(B_R^{\text{ext}}) = - \frac{n-2}{R}.
\end{align}
On the other hand, for $\Omega = B_R$, the first eigenvalue of \eqref{eq:StekExt} is given by $\mu_1(B_R^\text{ext}) = \frac{n-2}{R}$ which confirms Theorem \ref{satz:alpha*var'}.

\subsection{Continuity}\label{sec:cont}
In order to show existence of optimal or critical domains, it is often helpful to have continuity of the eigenvalue. In this section, we show that the mappings $\Omega \mapsto \lambda_1^\alpha(\Omega^\text{ext})$ and $\Omega \mapsto \mu_1(\Omega^\text{ext})$ are continuous with respect to the Hausdorff metric. In particular, in view of Theorem \ref{satz:alpha*var'}, the continuity of $\mu_1(\Omega^\text{ext})$ allows us to deduce that $\alpha < \alpha^*(\Omega_0^\text{ext})$ implies $\alpha < \alpha^*(\Omega^\text{ext})$ for all domains $\Omega$ in a neighborhood of $\Omega_0$.

\vspace{1ex}
\begin{definition}\label{def:HausX}
Let $D \subseteq \mathbb{R}^n$ be a given bounded subset of $\mathbb{R}^n$. Then, we define
\begin{align*}
X:= \left\{ \Omega \subseteq D, \, \Omega \, \text{is a convex domain with } \,  \partial \Omega \in \mathcal{C}^2 \right\}.
\end{align*}
We denote the Hausdorff metric of two domains $A, B \in X$ by
\begin{align*}
\delta(A,B) := \inf \left\{ \varepsilon > 0: A \subseteq B_\varepsilon \text{ and } B \subseteq A_\varepsilon \right\},
\end{align*}
where $B_\varepsilon := \cup_{b \in B} \{ z \in \R^n: ||z-b|| \leq \varepsilon \}$. 
\end{definition}
\vspace{1ex}

The definition of the Hausdorff metric requires less regularity of the boundary, but for now we are only interested in the smooth setting. To prove continuity with respect to the Hausdorff metric, we need the following extension result.

\vspace{1ex}
\begin{lemma}\label{lemma:extension}
Let $\Omega_0 \in X$ and let $\left( \Omega_m \right)_{m \in \N} \subset X$ such that $\lim_{m \to \infty} \delta( \Omega_m, \Omega_0) = 0$. If $ \left( u_m \right)_{m \in \N} \subset W^{1,2}(\Omega_m^{\text{ext}})$  fulfills $|| u_m ||_{W^{1,2}(\Omega_m^{\text{ext}})} \leq K_1$ (independent of $m$), there exist extensions $\widehat{u_m} \in W^{1,2}(\R^n)$ with $u_m(x) = \widehat{u_m}(x)$ for $x \in \Omega_m^{\text{ext}}$ and $|| \widehat{u_m}||_{W^{1,2}(\R^n)} \leq K_2$ (independent of $m$).
\end{lemma}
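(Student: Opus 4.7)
The plan is to reduce everything to a single fixed extension operator on the reference exterior $\Omega_0^{\text{ext}}$. Since $\partial\Omega_0 \in \mathcal{C}^2$ and is compact, the classical Stein extension theorem supplies a bounded linear operator $E_0 : W^{1,2}(\Omega_0^{\text{ext}}) \to W^{1,2}(\R^n)$. My goal is to construct diffeomorphisms $\Phi_m : \R^n \to \R^n$ with $\Phi_m(\Omega_0) = \Omega_m$ and uniform $\mathcal{C}^1$-control on $\Phi_m, \Phi_m^{-1}$, and then define
\begin{equation*}
\widehat{u_m} := E_0\bigl( u_m \circ \Phi_m \bigr) \circ \Phi_m^{-1} .
\end{equation*}
Because $\Phi_m$ is a global diffeomorphism that maps $\Omega_0^{\text{ext}}$ bijectively onto $\Omega_m^{\text{ext}}$, restricting $\widehat{u_m}$ to $\Omega_m^{\text{ext}}$ gives $(u_m \circ \Phi_m)\circ \Phi_m^{-1} = u_m$, so the compatibility requirement is automatic. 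The chain rule and a change of variables then yield
\begin{equation*}
\|\widehat{u_m}\|_{W^{1,2}(\R^n)} \leq C\bigl(\|D\Phi_m\|_\infty, \|D\Phi_m^{-1}\|_\infty, \inf|\det D\Phi_m|\bigr)\, \|E_0\|\, \|u_m\|_{W^{1,2}(\Omega_m^{\text{ext}})},
\end{equation*}
so uniform $\mathcal{C}^1$-bounds on $\Phi_m$ together with $\|u_m\|_{W^{1,2}(\Omega_m^{\text{ext}})} \leq K_1$ furnish the required constant $K_2$.

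For the construction of $\Phi_m$, I would use that $\Omega_0 \in X$ is a bounded convex $\mathcal{C}^2$ domain, so an interior point $p_0 \in \Omega_0$ can be fixed, and $\partial \Omega_0$ written as a radial graph $\partial\Omega_0 = \{p_0 + r_0(\omega)\omega : \omega \in S^{n-1}\}$ with $r_0 \in \mathcal{C}^2(S^{n-1})$ bounded away from zero. By Hausdorff convergence of convex bodies together with convexity, $\partial \Omega_m$ admits an analogous parametrization $r_m \in \mathcal{C}^2(S^{n-1})$ for $m$ large, and the standard fact that Hausdorff convergence of convex bodies (with nonempty interior) implies uniform convergence of the radial (and support) functions yields $r_m \to r_0$ uniformly on $S^{n-1}$. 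I would then define a radial stretching
\begin{equation*}
\Phi_m(p_0 + s\omega) := p_0 + s\, \tfrac{r_m(\omega)}{r_0(\omega)} \omega \quad \text{for } 0 \leq s \leq r_0(\omega),
\end{equation*}
and interpolate smoothly with the identity on the complement of a large ball $B_\rho \supset D$ via a cutoff $\chi$, so $\Phi_m$ is a diffeomorphism of $\R^n$ equal to the identity outside $B_\rho$. The closeness $r_m/r_0 \to 1$ in $\mathcal{C}^0$ produces uniform bounds on $\|\Phi_m - \mathrm{id}\|_\infty$; to get the matching $\mathcal{C}^1$-bounds, I would improve the convergence $r_m \to r_0$ in $\mathcal{C}^1(S^{n-1})$ by combining convexity of $\Omega_m$ (which bounds the derivatives of $r_m$ through the support function) with a compactness / Arzelà–Ascoli argument, passing to a subsequence if necessary and using uniqueness of the limit.

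The main obstacle is exactly this $\mathcal{C}^1$-upgrade of the Hausdorff convergence of the boundaries: Hausdorff convergence alone gives only $\mathcal{C}^0$-convergence of $r_m$, while the push-forward computation above requires uniform bounds on $Dr_m$ (and on $1/r_m$). I expect to circumvent this by exploiting that each $\Omega_m$ is convex and contained in a bounded set $D$, which translates into a uniform modulus of continuity for the outward normals, and hence for $\nabla_{S^{n-1}} r_m$; combined with the lower bound on $r_m$ obtained by noting that for $m$ large $\Omega_m$ contains a fixed ball centered at $p_0$ (again by Hausdorff convergence from a full-dimensional $\Omega_0$), this gives the required uniform $\mathcal{C}^1$-control of $\Phi_m^{\pm 1}$. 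A cleaner alternative, should the diffeomorphism route prove technical, is to invoke a uniform extension theorem for families of Lipschitz domains with uniform Lipschitz constant (the existence of which follows from the reflection construction and a partition of unity with uniformly bounded multiplicity), after verifying that convex $\mathcal{C}^2$-Hausdorff-convergent domains are eventually uniformly Lipschitz.
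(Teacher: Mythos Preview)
Your proposal is correct, but it takes a more laborious route than the paper, and the ``cleaner alternative'' you mention at the end is essentially what the paper actually does. The paper restricts $u_m$ to the bounded annulus $\widehat{\Omega_m}:=\Omega_m^{\text{ext}}\cap B_R$ (choosing $r,R$ with $\overline{B_r}\subset\Omega_m\subset B_R$ for all $m$), observes that convexity of $\Omega_m$ yields a \emph{uniform} cone condition for the family $\widehat{\Omega_m}$, and then applies Chenais' uniform extension theorem to obtain $\widetilde{u_m}\in W^{1,2}(\R^n)$ with $\|\widetilde{u_m}\|_{W^{1,2}(\R^n)}\le K_3$. The final extension is obtained by patching: $\widehat{u_m}=\widetilde{u_m}$ on $B_R$ and $\widehat{u_m}=u_m$ on $\Omega_m^{\text{ext}}$. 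No diffeomorphism is built.

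Your primary approach via a single reference extension operator $E_0$ and diffeomorphisms $\Phi_m$ is viable, but note a small slip: you do \emph{not} get (and do not need) $\mathcal{C}^1$-convergence $r_m\to r_0$. Hausdorff convergence of convex $\mathcal{C}^2$ bodies does not force convergence of $\nabla r_m$; what convexity plus the uniform inner/outer ball gives is a uniform \emph{Lipschitz} bound on $r_m$, hence uniform bi-Lipschitz control of $\Phi_m^{\pm1}$, and that is already enough for the $W^{1,2}$ change-of-variables estimate you wrote down. The payoff of the paper's route is that it replaces this whole construction by a single citation to a uniform extension theorem; the payoff of yours is that it produces genuine extension operators that vary continuously with the domain, which can be useful elsewhere but is not needed here.
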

\begin{proof}
We choose $R,r>0$ such that $\overline{B_r} \subset \Omega_m$ and $\overline{\Omega} \subset B_R$  for all $m \in \N_0$. We consider the bounded domains $\widehat{\Omega_m} := \Omega_m^{\text{ext}} \cap B_R$. Since $\Omega_m$ is convex, $\widehat{\Omega_m}$ fulfills the cone property, given in \cite[Definition 2]{chenais1975existence}.  Hence, \cite[Theorem II.1]{chenais1975existence} yields that there is a uniform extension $\widetilde{u_m}$ of $ u_m \big|_{\widehat{\Omega_m}}$ to $\R^n$,  i.e. there exists a sequence $\widetilde{u_m} \in W^{1,2}(\R^n)$ with $|| \widetilde{u}_m ||_{W^{1,2}(\R^n)} \leq K_3$ and $\widetilde{u_m}(x) = u_m(x)$ for $x \in \widehat{\Omega_m}$. Thus, we define
\begin{align*}
\widehat{u_m}(x) := \begin{cases}
\widetilde{u_m}(x) &\text{ for } x \in B_R,\\
u_m(x) &\text{ for } x \in \Omega_m^{\text{ext}},
\end{cases}
\end{align*}
and obtain $
|| \widehat{u_m} ||^2_{W^{1,2}(\R^n)} \leq  || \widehat{u_m} ||^2_{W^{1,2}(B_R)} + || \widehat{u_m} ||^2_{W^{1,2}(\Omega_m^{\text{ext}})} \leq K_3^2 + K_1^2 =: K_2^2$.
\end{proof}

Using the previous lemma, we can proceed similar to D. Bucur and F. Gazzola in \cite[Theorem 5.1]{bucur2011first} to prove the continuity of the Steklov eigenvalue.
\vspace{1ex}
\begin{lemma}\label{lemma:contf1}
Let $n \geq 3$. The mapping
\begin{align*}
f_1: X \to \R, \, \, \Omega \mapsto \mu_1(\Omega^{\text{ext}})
\end{align*}
is continuous with respect to the Hausdorff metric.
\end{lemma}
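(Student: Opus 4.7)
The plan is to establish both $\limsup_{m \to \infty} \mu_1(\Omega_m^{\text{ext}}) \le \mu_1(\Omega_0^{\text{ext}})$ and $\liminf_{m \to \infty} \mu_1(\Omega_m^{\text{ext}}) \ge \mu_1(\Omega_0^{\text{ext}})$ for every sequence $\Omega_m \to \Omega_0$ in the Hausdorff metric, using the variational characterization \eqref{eq:varcharstek}. The test-function for the upper bound will come from the $\Omega_0$-eigenfunction, restricted to $\Omega_m^{\text{ext}}$, while the lower bound will follow a compactness/weak-limit argument relying on Lemma~\ref{lemma:extension}.

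For the upper bound, I would take the first Steklov eigenfunction $u_0 \in E^1(\Omega_0^{\text{ext}})$ with $\|u_0\|_{L^2(\partial \Omega_0)}^2 = 1$ and $\|\nabla u_0\|_{L^2(\Omega_0^{\text{ext}})}^2 = \mu_1(\Omega_0^{\text{ext}})$, then truncate to a compactly supported function $\phi = u_0 \psi_R \in W^{1,2}(\Omega_0^{\text{ext}})$ following the construction used in the proof of Theorem~\ref{satz:alpha*var'}, so that $\|\nabla \phi\|^2_{L^2(\Omega_0^{\text{ext}})}$ is within $\varepsilon$ of $\mu_1(\Omega_0^{\text{ext}})$ and $\|\phi\|^2_{L^2(\partial \Omega_0)} = 1$. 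Extending $\phi$ across the smooth compact boundary $\partial \Omega_0$ produces $\widehat{\phi} \in W^{1,2}(\mathbb{R}^n)$, and the restriction $\widehat{\phi}|_{\Omega_m^{\text{ext}}}$ is admissible in \eqref{eq:varcharstek}. Since Hausdorff convergence of convex $\mathcal{C}^2$ domains makes the symmetric difference $\Omega_m^{\text{ext}} \triangle \Omega_0^{\text{ext}}$ vanish in measure, $\|\nabla \widehat{\phi}\|^2_{L^2(\Omega_m^{\text{ext}})} \to \|\nabla \phi\|^2_{L^2(\Omega_0^{\text{ext}})}$, and the convergence of surface integrals of continuous functions over Hausdorff-convergent convex $\mathcal{C}^2$ boundaries gives $\|\widehat{\phi}\|^2_{L^2(\partial \Omega_m)} \to 1$. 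Letting $m \to \infty$ and then $R \to \infty$ yields the upper semicontinuity.

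For the lower bound, let $u_m \in E^1(\Omega_m^{\text{ext}})$ be the first eigenfunctions normalized by $\|u_m\|^2_{L^2(\partial \Omega_m)} = 1$, so that $\|\nabla u_m\|^2_{L^2(\Omega_m^{\text{ext}})} = \mu_1(\Omega_m^{\text{ext}})$, which is uniformly bounded by the upper-bound step applied to a fixed reference domain. On the bounded annular region $\widehat{\Omega_m} = \Omega_m^{\text{ext}} \cap B_R$, I would combine the trace bound with a Poincaré-type inequality, uniform in $m$ thanks to the uniform cone property from convexity and $\mathcal{C}^2$ regularity, to obtain $\|u_m\|_{W^{1,2}(\widehat{\Omega_m})} \le K_1$. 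Lemma~\ref{lemma:extension} then supplies extensions $\widehat{u_m} \in W^{1,2}(\mathbb{R}^n)$ with $\|\widehat{u_m}\|_{W^{1,2}(\mathbb{R}^n)} \le K_2$, and a subsequence converges weakly in $W^{1,2}_{\mathrm{loc}}$ and strongly in $L^2_{\mathrm{loc}}$ to some $u_\infty$. Identifying $u_\infty$ with an admissible function in $E^1(\Omega_0^{\text{ext}})$ with $\|u_\infty\|^2_{L^2(\partial \Omega_0)} = 1$, together with weak lower semicontinuity of the Dirichlet integral, gives $\mu_1(\Omega_0^{\text{ext}}) \le \|\nabla u_\infty\|^2_{L^2(\Omega_0^{\text{ext}})} \le \liminf_{m \to \infty} \mu_1(\Omega_m^{\text{ext}})$.

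The main obstacle will be passing to the limit in the boundary integrals, both the normalization $\|u_m\|^2_{L^2(\partial \Omega_m)} \to \|u_\infty\|^2_{L^2(\partial \Omega_0)}$ in the lower-bound step and the convergence $\|\widehat{\phi}\|^2_{L^2(\partial \Omega_m)} \to \|\phi\|^2_{L^2(\partial \Omega_0)}$ in the upper-bound step, since the boundaries move with $m$. The convexity and $\mathcal{C}^2$ regularity in the class $X$ allow uniform local representations of $\partial \Omega_m$ as graphs over $\partial \Omega_0$ that are nearly isometric for large $m$, which combined with \cite[Theorem II.1]{chenais1975existence} should suffice to upgrade Hausdorff convergence into $L^2$ convergence of traces; implementing this rigorously is the technical heart of the proof.
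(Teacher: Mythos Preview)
Your overall architecture---upper semicontinuity plus lower semicontinuity via the variational characterization, uniform extension, weak compactness---matches the paper's, but several key steps are implemented differently and the paper's choices are worth knowing. For the \emph{upper} bound, the paper avoids extending $u_0$ across $\partial\Omega_0$ altogether: it picks dilations $t_m\to 1$ with $\overline{\Omega_0}\subseteq t_m\Omega_m$, so that $(t_m\Omega_m)^{\text{ext}}\subseteq\Omega_0^{\text{ext}}$ and any $u\in E^1(\Omega_0^{\text{ext}})$ is directly admissible on $(t_m\Omega_m)^{\text{ext}}$; the scaling $\mu_1(t_m\Omega_m^{\text{ext}})=\mu_1(\Omega_m^{\text{ext}})/t_m$ then finishes the argument without any boundary-trace convergence or truncation. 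For the \emph{lower} bound, the paper obtains the uniform $L^2$ control on $\Omega_m^{\text{ext}}\cap B_R$ not via a Poincar\'e-type inequality but via the embedding $E^1(\Omega_m^{\text{ext}})\hookrightarrow L^{2n/(n-2)}(\Omega_m^{\text{ext}})$ from \cite[Corollary~3.4]{Auchmuty2014}, observing that the constant is uniform since $\Omega_m\subseteq B_R$, and then applies H\"older on the bounded annulus; it also works with weak convergence in the Hilbert space $E^1(B_r^{\text{ext}})$ rather than $W^{1,2}_{\mathrm{loc}}$, which automatically places the weak limit in the finite-energy class (a point your sketch leaves implicit). For the hardest step---convergence of $\|u_m\|_{L^2(\partial\Omega_m)}^2$---the paper simply cites \cite[(5.5)]{bucur2011first}, which supplies exactly the trace-convergence statement you identify as the technical heart. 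Your plan would go through, but the dilation trick and the $E^1\hookrightarrow L^{2n/(n-2)}$ embedding streamline the argument considerably.
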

\begin{proof}
The proof is structured as the proof of \cite[Theorem 5.1]{bucur2011first}: We consider a sequence $ \left( \Omega_m \right)_{m \in \N} \subset X $ that converges in the Hausdorff metric. We start by proving the upper semicontinuity. This gives an upper bound for the sequence of eigenfunctions corresponding to $\mu_1(\Omega_m^\text{ext})$, which allows us to deduce the existence of a weakly convergent subsequence. This can be used to infer the convergence of $\mu_1(\Omega_m^\text{ext})$.

We start by proving the upper semicontinuity. Let $ \left( \Omega_m \right)_{m \in \N} \subset X $ be a sequence with $\lim_{m \to \infty} \delta(\Omega_m, \Omega_0)=0$ and we choose $r,R>0$ such that $\overline{B_r} \subseteq \Omega_m, \, \overline{\Omega_m} \subseteq B_R$ for all $m \in \mathbb{N}_0$.  In addition, we consider a sequence $(t_m)_{m \in \N} \subset \mathbb{R}$ with $\lim_{m \to \infty} t_m = 1$ and $ \overline{\Omega_0} \subseteq t_m \Omega_m $. Then, $ (t_m \Omega_m)^\text{ext} \subseteq \Omega_0^\text{ext}$ and any $u \in E^1(\Omega_0^\text{ext})$ fulfills
\begin{align*}
\lim_{m \to \infty} \int_{(t_m \Omega_m)^\text{ext}} | \nabla u|^2 \, \mathrm{d} x = \int_{\Omega_0^\text{ext}} | \nabla u|^2 \, \mathrm{d} x \, \, \text{ and } \, \, \lim_{m \to \infty} \int_{ \partial (t_m \Omega_m)} u^2 \, \mathrm{d} x = \int_{ \partial \Omega_0} u^2 \, \mathrm{d} x.
\end{align*} 
Hence, for any $u \in E^1(\Omega_0^\text{ext})$, it holds
\begin{align*}
\limsup_{m \to \infty} \mu_1 ( (t_m \Omega_m)^\text{ext} ) \leq \lim_{m \to \infty} \frac{||\nabla u||_{L^2((t_m \Omega_m)^\text{ext})}^2}{||u||_{L^2( \partial (t_m \Omega_m))}^2} =  \frac{||\nabla u||_{L^2(\Omega_0^\text{ext})}^2}{||u||_{L^2( \partial \Omega_0)}^2}.
\end{align*}
Since  $\mu_1( t_m \Omega_m^\text{ext} ) = \frac{\mu_1( \Omega_m^\text{ext} )}{t_m}$, we obtain 
\begin{align*}
\mu_1( \Omega_0^\text{ext} ) \geq \limsup_{m \to \infty} \mu_1(t_m \Omega_m^\text{ext} ) = \limsup_{m \to \infty} \mu_1(\Omega_m^\text{ext}).
\end{align*}

In the second step, we prove the lower semicontinuity. With $u_m \in E^1(\Omega_m^\text{ext})$,  we denote a sequence of functions satisfying $||u_m||_{L^2(\partial \Omega_m)} = 1$ and 
\begin{align*}
\mu_1(\Omega_m^\text{ext}) =  \frac{|| \nabla u_m||_{L^2(\Omega_m^\text{ext})}^2}{||  u_m||_{L^2( \partial \Omega_m)}^2}=  || \nabla u_m||_{L^2(\Omega_m^\text{ext})}^2.
\end{align*}
By the upper semicontinuity, there exists a $K>0$ such that $\mu_1(\Omega_m^\text{ext}) \leq K$. Therefore, it holds $||u_m||_{E^{1}(\Omega_m^\text{ext})}^2 \leq K + \frac{1}{|\partial \Omega_m|} \leq K+\frac{1}{|\partial B_r|}$. By \cite[Corollary 3.4]{Auchmuty2014}, there exists a constant $C_{\Omega_m}$ such that 
\begin{align*}
\frac{1}{C_{\Omega_m}} ||u_m||_{L^\frac{2n}{n-2}(\Omega_m^\text{ext})} \leq ||u_m||_{E^{1}(\Omega_m^\text{ext})} \leq \sqrt{K+\frac{1}{|\partial B_r|}}.
\end{align*}
In view of the construction of $C_{\Omega_m}$, it can be chosen independent of $m$ since $\Omega_m \subseteq B_R$. Thus, there exists a constant $K_1$ with  
\begin{align*}
||u_m||_{L^\frac{2n}{n-2}(\Omega_m^\text{ext}\cap B_R)} \leq ||u_m||_{L^\frac{2n}{n-2}(\Omega_m^\text{ext})} \leq K_1.
\end{align*}
Since $\frac{2n}{n-2} \geq 2$, the H\"older inequality leads to
\begin{align*}
||u_m||_{L^2(\Omega_m^\text{ext}\cap B_R)} \leq |\Omega_m^\text{ext}\cap B_R|^{\frac{1}{2}-\frac{n-2}{2n}} ||u_m||_{L^\frac{2n}{n-2}(\Omega_m^\text{ext}\cap B_R)} < |B_R \setminus B_r|^\frac{1}{n} K_1.
\end{align*}
Thus, there exists a constant $K_2>0$, independent of $m$ with $||u_m||_{W^{1,2}(\Omega_m^\text{ext}\cap B_R)} \leq K_2$. Analogous to Lemma \ref{lemma:extension}, there are extensions of $u_m$, denoted by $\widehat{u_m} \in W^{1,2}(B_R) $, such that $ || \widehat{u_m}||_{W^{1,2}(B_R)} \leq K_3$. Furthermore, we define the function $\widetilde{u_m} \in E^1(B_r^\text{ext})$ by
\begin{align*}
\widetilde{u_m}(x):=\begin{cases}
\widehat{u_m}(x) \, &\text{ if } x \in B_R \setminus \overline{B_r}, \\
u_m(x)\, &\text{ if } x \in \Omega_m^\text{ext}.
\end{cases}
\end{align*}
 Then, since the trace operator on $B_R \cap B_r^\text{ext}$ is bounded, we have
\begin{align*}
|| \widetilde{u_m}||_{E^1(B_r^\text{ext})} &\leq C || \widehat{u_m}||_{W^{1,2}(B_r^\text{ext} \cap B_R)} + || u_m||_{E^1(\Omega_m^\text{ext})}  \leq  C K_3 + \sqrt{K+\frac{1}{| \partial B_r |}}.
\end{align*}
Hence, there is a subsequence - again denoted by $\widetilde{u_m}$ - which is weakly convergent.  We denote the limit by $\widetilde{u} \in E^1(B_r^\text{ext}) $. Let $\chi_m$ denote the characteristic function on $\Omega_m^\text{ext}$ and $\chi$ denotes the characteristic function on $\Omega_0^\text{ext}$. For any $\phi \in C_0^\infty$, it holds
\begin{align*}
&\int_{\Omega_m^\text{ext}} | \nabla \widetilde{ u_m}| \phi \, \mathrm{d} x - \int_{\Omega_0^\text{ext}} | \nabla \widetilde{u}| \phi \, \mathrm{d} x \\
=& \int_{B_r^\text{ext}} | \nabla \widetilde{ u_m}| \chi_m \phi - | \nabla \widetilde{u}| \chi \phi \, \mathrm{d} x \\ 
\leq & \left| \int_{B_r^\text{ext}} \left( | \nabla \widetilde{ u_m}|  - | \nabla \widetilde{u}| \right) \chi_m \phi \, \mathrm{d} x \right| + \left| \int_{B_r^\text{ext}}  | \nabla \widetilde{u}| \phi  \left( \chi_m  -  \chi \right) \, \mathrm{d} x \right|.
\end{align*}
The last integral obviously vanishes for $m \to \infty$. Furthermore, $|\nabla \widetilde{ u_m}|  - | \nabla \widetilde{u}|$ converges weakly to zero and $\chi_m \phi$ converges strongly to $\chi \phi$. Thus, the first integral also vanishes for $m \to \infty$. Hence, $|  \nabla \widetilde{ u_m}| \chi_m$ converges weakly to $|\nabla \widetilde{u}| \chi$ in $L^2(B_r^\text{ext})$. In view of the lower semicontinuity of the $L^2$-norm with respect to weak convergence, we obtain
\begin{align*}
\liminf_{m \to \infty} || \nabla u_m ||_{L^2(\Omega_m^\text{ext})}^2 = \liminf_{m \to \infty} ||  \chi_m \nabla \widetilde{u_m} ||_{L^2(B_r^\text{ext})}^2 \geq  ||  \chi \nabla \widetilde{u} ||_{L^2(B_r^\text{ext})}^2 =||   \nabla \widetilde{u} ||_{L^2(\Omega_0^\text{ext})}^2.
\end{align*}
For the boundary integral, we can use \cite[(5.5)]{bucur2011first}, i.e.
\begin{align*}
\lim_{m \to \infty} \int_{ \partial \Omega_m} u_m^2 dS = \int_{\partial \Omega_0} \widetilde{u}^2 dS.
\end{align*}
Therefore, we obtain
\begin{align*}
\liminf_{ m \to \infty} \mu_1(\Omega_m^\text{ext}) \geq \frac{||   \nabla {\widetilde{u}} ||_{L^2(\Omega_0^\text{ext})}^2}{||   \widetilde{u} ||_{L^2( \partial \Omega_0)}^2} \geq \mu_1(\Omega_0^\text{ext}).
\end{align*}
\end{proof}

In a similar way, we obtain the continuity of the Robin eigenvalue.
\vspace{1ex}
\begin{theorem}
Let $\alpha < \alpha^*(\Omega_0^\text{ext}) = - \mu_1(\Omega_0^\text{ext})$ for a fixed $\Omega_0 \in X$. The mapping
\begin{align*}
f_2: X \to \R, \, \Omega \mapsto \lambda_1^\alpha(\Omega^{\text{ext}})
\end{align*}
is continuous with respect to the Hausdorff metric at $\Omega_0$.
\end{theorem}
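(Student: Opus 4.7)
The plan is to mirror the two-step structure of Lemma \ref{lemma:contf1}: upper semicontinuity via a suitable test function, then lower semicontinuity via weak compactness. By the continuity of $\mu_1$ just established and the hypothesis $\alpha < -\mu_1(\Omega_0^{\text{ext}})$, we have $\alpha < -\mu_1(\Omega_m^{\text{ext}})$ for all $m$ large, so by Theorem \ref{satz:alpha*var'} each $\lambda_1^\alpha(\Omega_m^{\text{ext}})$ is a discrete, negative eigenvalue realized by an $L^2$-normalized eigenfunction $u_m \in W^{1,2}(\Omega_m^{\text{ext}})$. For the upper semicontinuity I would let $v \in W^{1,2}(\Omega_0^{\text{ext}})$ be the first eigenfunction on $\Omega_0^{\text{ext}}$, use Lemma \ref{lemma:extension} to extend it to $\hat v \in W^{1,2}(\R^n)$, and test \eqref{eq:varcharext} on $\Omega_m^{\text{ext}}$ against $\hat v$. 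Dominated convergence (using $\chi_{\Omega_m^{\text{ext}}} \to \chi_{\Omega_0^{\text{ext}}}$ almost everywhere and the integrable majorants $|\nabla \hat v|^2,\,|\hat v|^2 \in L^1(\R^n)$) handles both volume integrals, while the boundary term converges by the same Hausdorff-convergence argument \cite[(5.5)]{bucur2011first} that was invoked in Lemma \ref{lemma:contf1}, giving $\limsup_{m\to\infty}\lambda_1^\alpha(\Omega_m^{\text{ext}}) \leq \lambda_1^\alpha(\Omega_0^{\text{ext}})$.

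For the lower semicontinuity my first task is a uniform $W^{1,2}$-bound on the normalized $u_m$. An Ehrling-type trace estimate of the form $\|u\|_{L^2(\partial\Omega_m)}^2 \leq \epsilon \|\nabla u\|_{L^2(\Omega_m^{\text{ext}})}^2 + C_\epsilon \|u\|_{L^2(\Omega_m^{\text{ext}})}^2$, which holds with a constant $C_\epsilon$ uniform in $m$ thanks to the uniform cone property of the family $\{\Omega_m\}$, combined with the identity $\|\nabla u_m\|^2 = \lambda_1^\alpha(\Omega_m^{\text{ext}}) + |\alpha|\|u_m\|_{L^2(\partial\Omega_m)}^2$ and $\lambda_1^\alpha(\Omega_m^{\text{ext}}) \leq 0$, gives $\|\nabla u_m\|_{L^2(\Omega_m^{\text{ext}})}^2 \leq K$ upon choosing $\epsilon = \tfrac{1}{2|\alpha|}$. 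I would then extend the $u_m$ to $\widetilde{u_m} \in E^1(B_r^{\text{ext}})$ exactly as in the proof of Lemma \ref{lemma:contf1}, pass to a weakly convergent subsequence with limit $\widetilde u$, and reuse the characteristic-function argument of that proof to obtain $\liminf_m \|\nabla u_m\|_{L^2(\Omega_m^{\text{ext}})}^2 \geq \|\nabla \widetilde u\|_{L^2(\Omega_0^{\text{ext}})}^2$, together with the boundary convergence $\int_{\partial\Omega_m}u_m^2 \, \mathrm{d}S \to \int_{\partial\Omega_0}\widetilde u^2 \, \mathrm{d}S$ via \cite[(5.5)]{bucur2011first}.

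The main obstacle, genuinely new compared with the Steklov case, is showing $\|\widetilde u\|_{L^2(\Omega_0^{\text{ext}})} = 1$: because $W^{1,2}(B_r^{\text{ext}})\hookrightarrow L^2(B_r^{\text{ext}})$ is not compact, a priori some of the mass of $u_m$ could escape to infinity. To rule this out I would use that the upper semicontinuity already forces $\lambda_1^\alpha(\Omega_m^{\text{ext}}) \leq \tfrac{1}{2}\lambda_1^\alpha(\Omega_0^{\text{ext}}) =: -\delta < 0$ for $m$ large, so each $u_m$ is a ground state at energy uniformly bounded away from $0$. Since $\overline{\Omega_m}\subseteq B_R$, on $B_R^{\text{ext}}$ the function $u_m$ solves $-\Delta u_m + |\lambda_1^\alpha(\Omega_m^{\text{ext}})| u_m = 0$ and can be compared via the maximum principle with the positive radial supersolution $c\,|x|^{-(n-2)/2} K_{(n-2)/2}(|x|\sqrt{\delta})$ from \eqref{eq:uExtBall}, where $c$ is controlled uniformly in $m$ by the $W^{1,2}$-bound together with interior elliptic regularity and the trace theorem on a fixed annulus around $\partial B_R$. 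This yields uniform exponential decay $|u_m(x)| \leq C e^{-\sqrt{\delta}|x|}$ for $|x|$ large, so $\int_{\Omega_m^{\text{ext}}\setminus B_\rho}|u_m|^2\,\mathrm{d}x \to 0$ uniformly in $m$ as $\rho\to\infty$. Combined with strong $L^2$-convergence on bounded regions this gives $\|\widetilde u\|_{L^2(\Omega_0^{\text{ext}})}=1$, and testing \eqref{eq:varcharext} against $\widetilde u$ then yields $\lambda_1^\alpha(\Omega_0^{\text{ext}}) \leq \|\nabla \widetilde u\|^2 + \alpha\|\widetilde u\|_{L^2(\partial\Omega_0)}^2 \leq \liminf_m \lambda_1^\alpha(\Omega_m^{\text{ext}})$.
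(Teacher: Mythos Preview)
Your proposal is correct, but it takes a longer route than the paper in two places, and the comparison is instructive.

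For upper semicontinuity, the paper does not extend the eigenfunction of $\Omega_0^{\text{ext}}$ to $\mathbb{R}^n$; instead it dilates $\Omega_m$ by factors $t_m\downarrow 1$ so that $(t_m\Omega_m)^{\text{ext}}\subseteq\Omega_0^{\text{ext}}$, tests there with $u_0$ directly, and then undoes the dilation via $\lambda_1^{\alpha/t_m}(t_m\Omega^{\text{ext}})=t_m^{-2}\lambda_1^{\alpha}(\Omega^{\text{ext}})$ together with continuity in $\alpha$. Your extension argument works equally well and is perhaps more transparent; the scaling trick avoids invoking dominated convergence and the boundary-convergence lemma at this stage.

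For lower semicontinuity, the paper normalizes on the \emph{boundary}, $\|u_m\|_{L^2(\partial\Omega_m)}=1$, rather than in $L^2(\Omega_m^{\text{ext}})$. With this choice $\|\nabla u_m\|^2<-\alpha$ is immediate from $\lambda_m<0$, and a bound on $\|u_m\|_{L^2(\Omega_m^{\text{ext}})}$ follows from $\lambda_m\le -K_1<0$ combined with $\|\nabla u_m\|^2\ge\mu_1(\Omega_m^{\text{ext}})$; no Ehrling estimate is needed. More importantly, the paper never proves that no mass escapes to infinity. It only uses weak lower semicontinuity of both $\|\nabla u_m\|_{L^2(\Omega_m^{\text{ext}})}$ \emph{and} $\|u_m\|_{L^2(\Omega_m^{\text{ext}})}$, together with the observation that the numerator $\|\nabla u_m\|^2+\alpha$ is negative: replacing a negative numerator by something possibly smaller and the denominator by something possibly smaller still yields a valid lower bound on the (negative) quotient. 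In your normalization the same trick is available: once you know $\|\widetilde u\|_{L^2(\Omega_0^{\text{ext}})}^2\le 1$ (weak lsc) and $\widetilde u\not\equiv 0$ (forced by $\|u_m\|_{L^2(\partial\Omega_m)}^2\ge\delta/|\alpha|$ from $\lambda_m\le-\delta$), the inequality $\lambda_1^\alpha(\Omega_0^{\text{ext}})\le N/\|\widetilde u\|^2\le N\le\liminf\lambda_m$ holds whenever $N:=\|\nabla\widetilde u\|^2+\alpha\|\widetilde u\|_{L^2(\partial\Omega_0)}^2<0$, and the case $N\ge 0$ is trivial. So your uniform exponential decay argument via comparison with the radial Bessel solution, while correct and a nice piece of analysis in its own right, is not needed for the theorem.
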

\begin{proof}
Let $(\Omega_m)_{m \in \mathbb{N}} \subset X$ be a sequence as in the proof of Lemma \ref{lemma:contf1}. Since $f_1$ is continuous with respect to the Hausdorff metric, the condition $\alpha < \alpha^*(\Omega_0^{\text{ext}})$ implies $\alpha < \alpha^*(\Omega_m^{\text{ext}})$ for all sufficiently large $m$. Without restriction, we can assume that $\alpha < \alpha^*(\Omega_m^{\text{ext}})$ holds for all $m \in \N_0$.

We start by proving the upper semicontinuity. To this end, we consider a sequence $(t_m)_{m \in \N} \subset (1, \infty)$ with $\lim_{m \to \infty} t_m = 1$ and $ \overline{\Omega_0} \subseteq t_m \Omega_m $. If $u_0$ denotes the eigenfunction corresponding to $\lambda_1^\alpha(\Omega_0^\text{ext})$, we obtain, analogously to Lemma \ref{lemma:contf1},
\begin{align*}
\lambda_1^\alpha(\Omega_0^\text{ext})  = \lim_{m \to \infty} \frac{\int_{t_m \Omega_m^\text{ext}} | \nabla u_0 |^2 \, \mathrm{d} x + \alpha \int_{\partial (t_m \Omega_m)} u_0^2 dS}{\int_{t_m \Omega_m^\text{ext}}  u_0^2 \, \mathrm{d} x} \geq \limsup_{m \to \infty} \lambda_1^\alpha( t_m \Omega_m^\text{ext}).
\end{align*}
With $\lambda_1^\frac{\alpha}{t_m}(t_m \Omega^\text{ext}) = \frac{1}{t_m^2} \lambda_1^\alpha( \Omega^\text{ext})$, and the continuity of $\alpha \mapsto \lambda_1^\alpha(\Omega^\text{ext})$, we obtain
\begin{align*}
\lambda_1^\alpha(\Omega_0^\text{ext}) \geq \limsup_{m \to \infty} \lambda_1^\alpha( t_m \Omega_m^\text{ext}) = \limsup_{m \to \infty} \frac{1}{t_m^2} \lambda_1^{ t_m \alpha}( \Omega_m^\text{ext}) = \limsup_{m \to \infty} \lambda_1^\alpha( \Omega_m^\text{ext}).
\end{align*}
It remains to show the lower semicontinuity. Let $u_m \in W^{1,2}(\Omega_m^\text{ext})$ be the eigenfunction corresponding to $\lambda_1^\alpha(\Omega_m^\text{ext})$ with $||u_m||_{L^2(\partial \Omega_m)}=1$. Due to the upper semicontinuity, there exists a $K_1 > 0$ such that
\begin{align*}
-K_1 \geq \frac{|| \nabla u_m||_{L^2(\Omega_m^\text{ext})}^2 + \alpha }{|| u_m||_{L^2(\Omega_m^\text{ext})}^2} \geq \frac{\mu_1(\Omega_m^\text{ext}) + \alpha }{|| u_m||_{L^2(\Omega_m^\text{ext})}^2},
\end{align*}
where we used \eqref{eq:varcharstek} for the second step. Since we assumed $\alpha < \alpha^*(\Omega_m^\text{ext})$, it holds $ \mu_1(\Omega_m^\text{ext}) + \alpha  <0$. Thus, it holds
\begin{align*}
\lim_{m \to \infty} || u_m||_{L^2(\Omega_m^\text{ext})}^2 <\lim_{m \to \infty} \frac{|\mu_1(\Omega_m^\text{ext}) + \alpha |}{2 K_1} = \frac{|\mu_1(\Omega_0^\text{ext}) + \alpha |}{2 K_1}.
\end{align*}
Furthermore, $|| \nabla u_m||_{L^2(\Omega_m^\text{ext})}^2 < - \alpha$ because $\lambda_1^\alpha(\Omega_m^\text{ext})<0$. Hence, there is a constant $K_2>0$ with $||u_m||_{W^{1,2}(\Omega_m^\text{ext})} \leq K_2$.
In view of Lemma \ref{lemma:extension}, there are extensions $\widetilde{u_m} \in W^{1,2}(B_r^\text{ext})$ of $u_m$ with $|| u_m ||_{W^{1,2}(B_r^\text{ext})} \leq K$. Thus, there is a weakly convergent subsequence with limit $\widetilde{u} \in W^{1,2}(B_r^\text{ext})$. Analogous to Lemma \ref{lemma:contf1}, it holds
\begin{align*}
\liminf_{m \to \infty} ||  \nabla u_m ||_{L^2(\Omega_m^\text{ext})}^2 = \liminf_{m \to \infty} || | \chi_m \nabla \widetilde{u_m} ||_{L^2(B_r^\text{ext})}^2 \geq  ||  \chi \nabla \widetilde{u} ||_{L^2(B_r^\text{ext})}^2 =||   \nabla \widetilde{u} ||_{L^2(\Omega_0^\text{ext})}^2
\end{align*}
and
\begin{align*}
\int_{\Omega_m^\text{ext}}  \widetilde{ u_m}  \phi \, \mathrm{d} x - \int_{\Omega_0^\text{ext}}    \widetilde{u}  \phi \, \mathrm{d} x =& \int_{B_r^\text{ext}}    \widetilde{ u_m}  \chi_m \phi -    \widetilde{u}  \chi \phi \, \mathrm{d} x \\ 
\leq & \left| \int_{B_r^\text{ext}} \left(     \widetilde{ u_m}   -     \widetilde{u}  \right) \chi_m \phi \, \mathrm{d} x \right| + \left|  \int_{B_r^\text{ext}}      \widetilde{u}  \phi  \left( \chi_m  -  \chi \right) \, \mathrm{d} x \right|.
\end{align*}
Analogous to Lemma \ref{lemma:contf1}, we obtain
\begin{align*}
\liminf_{m \to \infty} ||  u_m ||_{L^2(\Omega_m^\text{ext})}^2 = \liminf_{m \to \infty} ||  \chi_m  \widetilde{u_m} ||_{L^2(B_r^\text{ext})}^2 \geq  ||  \chi  \widetilde{u} ||_{L^2(B_r^\text{ext})}^2 =||   \widetilde{u} ||_{L^2(\Omega_0^\text{ext})}^2.
\end{align*}
In total, considering $|| \nabla u_m ||_{L^2(\Omega_m^\text{ext})}^2 + \alpha ||  u_m ||_{L^2( \partial \Omega_m)}^2 <0$, we obtain
\begin{align*}
\liminf_{m \to \infty} \lambda_1^\alpha(\Omega_m^\text{ext})  \geq& \, \liminf_{m \to \infty} \frac{|| \nabla \widetilde{u} ||_{L^2(\Omega_0^\text{ext})}^2 + \alpha ||  u_m ||_{L^2( \partial \Omega_m)}^2 }{||  \widetilde{u} ||_{L^2(\Omega_0^\text{ext})}^2} \\
=& \, \frac{|| \nabla \widetilde{u} ||_{L^2(\Omega_0^\text{ext})}^2 + \alpha ||  \widetilde{u} ||_{L^2( \partial \Omega_0)}^2 }{||  \widetilde{u} ||_{L^2(\Omega_0^\text{ext})}^2} \geq \lambda_1^\alpha(\Omega_0^\text{ext}).
\end{align*}
Thus, the lower and the upper semicontinuity of $f_2$ are shown.
\end{proof}

The continuity of $f_1$ and $f_2$ allows us to conclude the existence of an optimal domain. However, in Definition \ref{def:HausX}, we imposed the condition $\Omega \subseteq D$, where $D$ is a bounded domain. To deduce the existence of a global maximizer, one must ensure that a minimizing sequence cannot become arbitrarily thin. Currently, we are unable to guarantee this and the example in Section \ref{sec:glob} indicates that this could be problematic.

\vspace{1ex}
\begin{korollar}
Among all domains $\Omega \in X$ with the same measure as the unit ball, there exists a domain $\Omega_0$, that maximizes $\mu_1 (\Omega^\text{ext})$.

For $\alpha < \alpha^*(\Omega_0^\text{ext}) = - \mu_1 (\Omega_0^\text{ext})$, there is a domain $\Omega_1$, maximizing $\lambda_1^\alpha(\Omega^\text{ext})$ among all domains $X$ with the same measure as the unit ball.
\end{korollar}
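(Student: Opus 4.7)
The plan is to apply the direct method of the calculus of variations, combining the Blaschke selection theorem, which supplies compactness of $X$ in the Hausdorff metric, with the continuity results established in Lemma \ref{lemma:contf1} and the theorem that immediately precedes this corollary.

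For the first statement I would pick a maximizing sequence $(\Omega_m)_{m\in\N} \subset X$ with $|\Omega_m| = |B_1|$ and $\mu_1(\Omega_m^{\text{ext}})$ tending to $\sup \{ \mu_1(\Omega^{\text{ext}}) : \Omega \in X, \, |\Omega| = |B_1| \}$. Since each $\Omega_m$ is convex and contained in the fixed bounded set $D$, the Blaschke selection theorem extracts a subsequence (not relabeled) converging in the Hausdorff metric to some convex limit $\Omega_0 \subseteq \overline{D}$. For convex bodies inside a bounded set, Hausdorff convergence upgrades to convergence of Lebesgue measure, so $|\Omega_0| = |B_1|$; in particular $\Omega_0$ is nondegenerate. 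The continuity of $f_1$ provided by Lemma \ref{lemma:contf1} then forces $\mu_1(\Omega_0^{\text{ext}})$ to equal the supremum, and $\Omega_0$ is the desired maximizer.

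For the second statement I would first combine this maximality with Theorem \ref{satz:alpha*var'} to observe that the hypothesis $\alpha < -\mu_1(\Omega_0^{\text{ext}})$ already implies $\alpha < \alpha^*(\Omega^{\text{ext}})$ for every $\Omega \in X$ with $|\Omega| = |B_1|$. Hence $\lambda_1^\alpha(\Omega^{\text{ext}})$ is a discrete, strictly negative eigenvalue throughout the admissible class, and the preceding continuity theorem for $f_2$ applies at every point of the class. Repeating the Blaschke-plus-continuity argument, now with $f_2$ in place of $f_1$, then produces the maximizer $\Omega_1$.

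The main obstacle is the regularity of the Hausdorff limit: the family of convex $\mathcal{C}^2$ bodies in $D$ is not Hausdorff-closed, since a limit of smooth convex bodies can develop flat pieces or corners, so a priori $\Omega_0$ and $\Omega_1$ only sit in the Hausdorff closure of $X$. To place the maximizer back in $X$ itself one has to either relax the admissible class slightly, for instance to convex bodies with Lipschitz boundary — the cone-property argument in Lemma \ref{lemma:extension} already depends only on convexity, which is encouraging — or to extract additional regularity of an extremizer. A secondary, closely related difficulty, already flagged in the remark just preceding this corollary, is that without the confining set $D$ a maximizing sequence could drift to infinity or become arbitrarily thin, which is precisely why the ambient bounded set $D$ must be fixed in advance.
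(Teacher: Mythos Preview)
Your approach is essentially identical to the paper's: take a maximizing sequence, apply Blaschke selection (using $\Omega_m\subseteq D$), and invoke the continuity of $f_1$ (resp.\ $f_2$) to pass the supremum to the limit. Your observation that $\alpha<-\mu_1(\Omega_0^{\text{ext}})$ forces $\alpha<\alpha^*(\Omega^{\text{ext}})$ for \emph{every} admissible $\Omega$ (because $\Omega_0$ maximizes $\mu_1$) is exactly the reasoning the paper is relying on, though the paper states it more tersely.

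You are also right to flag the $\mathcal{C}^2$-regularity of the Hausdorff limit as an obstacle: the paper's proof does not address this point either, so your proposal is at least as complete as the original. The paper simply asserts that the supremum is attained after Blaschke selection, without checking that the limiting convex body lies in $X$ rather than merely in its Hausdorff closure.
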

\begin{proof}
We set $\widehat{X}:= \{ \Omega \in X: | \Omega | = |B_1 | \}$. Let $ \left( \Omega_m \right)_{m \in \mathbb{N}} \subset \widehat{X}$ be a sequence such that
\begin{align*}
\lim_{m \to \infty} \mu_1 (\Omega_m^\text{ext}) = \sup_{\Omega \in \widehat{X}} \mu_1 (\Omega^\text{ext}). 
\end{align*}
Since $\Omega_m \subseteq D$, the Blaschke selection theorem gives the existence of a converging subsequence (in the Hausdorff metric). Thus, the supremum is attained.

Now, let $\alpha < \alpha^*(\Omega_0^\text{ext})$ and let $ \left( \Omega_m \right)_{m \in \mathbb{N}} \subset \widehat{X}$ be a sequence such that
\begin{align*}
\lim_{m \to \infty} \lambda_1^\alpha (\Omega_m^\text{ext}) = \sup_{\Omega \in \widehat{X}} \lambda_1^\alpha (\Omega^\text{ext}). 
\end{align*}
Since $\alpha < \alpha^*(\Omega_0^\text{ext})$, we have continuity. Again, the Blaschke selection theorem gives the existence of a converging subsequence. Thus, the supremum is attained.
\end{proof}

\subsection{Monotonicity}\label{sec:mon}
In this section, we present a monotonicity result with respect to a certain kind of domain inclusion. We proceed analogues to \cite{giorgi2005monotonicity}, where T. Giorgi and R.G. Smits give a similar result for the first Robin eigenvalue on bounded domains. 
\vspace{1ex}
\begin{theorem}\label{theo:tiz}
Let $\Omega \subseteq \mathbb{R}^n$ be a Lipschitz domain and $B_r \subseteq \Omega$. Then, 
 \begin{align*}
 \mu_1(\Omega^{\text{ext}}) < \mu_1(B_r^{\text{ext}}) \, \text{ for } n \geq 3.
\end{align*}
For $n = 2$ and $\alpha <0$ or $n \geq 3$ and  $\alpha < -\mu_1(B_r^{\text{ext}})$, it holds
\begin{align*}
\lambda_1^\alpha(\Omega^{\text{ext}}) \leq \lambda_1^\alpha(B_r^{\text{ext}}).
\end{align*}
\end{theorem}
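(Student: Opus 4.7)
The approach is the classical test-function argument, parallel to \cite{giorgi2005monotonicity}: because $B_r\subseteq\Omega$ yields $\Omega^\text{ext}\subseteq B_r^\text{ext}$, I plan to insert the first eigenfunction of the eigenvalue problem on $B_r^\text{ext}$ as a trial function in the Rayleigh quotient characterizing the eigenvalue on $\Omega^\text{ext}$.

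For the Steklov inequality, take $u_1(x)=|x|^{2-n}$, the radial first Steklov eigenfunction on $B_r^\text{ext}$ corresponding to $\mu_1(B_r^\text{ext})=(n-2)/r$. Since $u_1$ is harmonic and decays sufficiently at infinity, Green's identity on $\Omega^\text{ext}$ gives $\int_{\Omega^\text{ext}}|\nabla u_1|^2\,\mathrm{d}x = (n-2)\int_{\partial\Omega}|x|^{2-2n}\langle x,\nu\rangle\,\mathrm{d}S$. The variational principle \eqref{eq:varcharstek} will then reduce the claim to the pointwise estimate
\begin{align*}
|x|^{2-2n}\langle x,\nu\rangle \;\leq\; |x|^{3-2n} \;\leq\; \frac{|x|^{4-2n}}{r} \quad\text{on } \partial\Omega,
\end{align*}
which is immediate from $\langle x,\nu\rangle\leq|x|$ and $|x|\geq r$ (the latter holding since $B_r\subseteq\Omega$). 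Both inequalities are strict on a set of positive surface measure whenever $\Omega\neq B_r$, giving the strict sign.

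For the Robin case, let $u_1(x)=v(|x|)$ with $v(s)=s^{-(n-2)/2}K_{(n-2)/2}(\beta s)$ and $\beta=\sqrt{-\lambda_1^\alpha(B_r^\text{ext})}$ (replace $v$ by $K_0(\beta s)$ when $n=2$); then $u_1>0$, $-\Delta u_1=\lambda_1^\alpha(B_r^\text{ext})u_1$, and $v'(r)/v(r)=\alpha$. Applying Green's identity together with the eigenvalue equation yields
\begin{align*}
a(u_1,u_1) = \lambda_1^\alpha(B_r^\text{ext})\int_{\Omega^\text{ext}}u_1^2\,\mathrm{d}x + \int_{\partial\Omega}u_1\bigl(\alpha u_1-\partial_\nu u_1\bigr)\,\mathrm{d}S,
\end{align*}
so it suffices to show the boundary integral is non-positive. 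Writing $g(s):=v'(s)/v(s)$ (so $g(r)=\alpha<0$) and $\cos\theta:=\langle x,\nu\rangle/|x|$, the integrand factors as $v(|x|)^2\bigl(\alpha-g(|x|)\cos\theta\bigr)$, which is pointwise non-positive once one shows $|g(s)|\leq|\alpha|$ for every $s\geq r$: then $g(s)\cos\theta\geq-|g(s)|\geq\alpha$, handling both signs of $\cos\theta$.

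The hardest part will therefore be the monotonicity claim $|g(s)|\leq|\alpha|$ for $s\geq r$. Since $|g(s)|=\beta K_{n/2}(\beta s)/K_{(n-2)/2}(\beta s)$, this is equivalent to the modified Bessel ratio $z\mapsto K_{m+1}(z)/K_m(z)$ being monotonically decreasing on $(0,\infty)$ for $m=(n-2)/2\geq 0$. This is a classical property of the modified Bessel functions of the second kind, provable for instance from the integral representation $K_\nu(z)=\int_0^\infty e^{-z\cosh t}\cosh(\nu t)\,\mathrm{d}t$ by a concentration argument as $z$ grows, and fits naturally with the Bessel inequalities assembled in Section~\ref{sec:ineqbess}.
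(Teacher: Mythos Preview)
Your argument is correct and, for the Robin inequality, essentially identical to the paper's: both insert the radial eigenfunction of $B_r^{\text{ext}}$ into the Rayleigh quotient on $\Omega^{\text{ext}}$, and both reduce the comparison to the pointwise boundary inequality $\alpha \leq g(|x|)\langle x/|x|,\nu\rangle$ via (i) $\langle x/|x|,\nu\rangle\leq 1$ combined with $g<0$, and (ii) the monotonicity $|g(s)|\leq |g(r)|$ for $s\geq r$. The paper phrases this as $\widehat{\alpha}(y):=\partial_\nu\phi/\phi>\alpha$ on $\partial\Omega$, but the content is the same.

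Two points of comparison worth noting. First, for the Steklov inequality you argue directly with the explicit harmonic test function $|x|^{2-n}$, whereas the paper instead deduces $\mu_1(\Omega^{\text{ext}})<\mu_1(B_r^{\text{ext}})$ \emph{a posteriori} from the Robin inequality together with the characterization $\alpha^*(\Omega^{\text{ext}})=-\mu_1(\Omega^{\text{ext}})$ of Theorem~\ref{satz:alpha*var'}: since $\alpha<-\mu_1(B_r^{\text{ext}})$ forces $\lambda_1^\alpha(\Omega^{\text{ext}})\leq\lambda_1^\alpha(B_r^{\text{ext}})<0$, one gets $\alpha<-\mu_1(\Omega^{\text{ext}})$ for all such $\alpha$, hence $\mu_1(\Omega^{\text{ext}})\leq\mu_1(B_r^{\text{ext}})$. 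Your direct route is more self-contained and yields the strict inequality more transparently. Second, the Bessel fact you single out --- that $z\mapsto K_{m+1}(z)/K_m(z)$ is strictly decreasing --- is precisely what both arguments require. The paper appeals to \eqref{eq:mondecf}, which literally shows that $z\,K_{m+1}(z)/K_m(z)$ is \emph{increasing}; this is a different (and by itself insufficient) statement, so your identification of the needed monotonicity is sharper. The decreasing property of the bare ratio is, as you say, classical (it follows for instance from the log-convexity of $\nu\mapsto K_\nu(z)$ or from the integral representation), so there is no gap here --- just be sure to cite it rather than \eqref{eq:mondecf}.
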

\begin{proof}
Let $\phi$ denote the eigenfunction corresponding to $\lambda_1^\alpha(B_r^{\text{ext}})$. The function $\phi$ is radial and we write $\phi(x) = f(|x|)$. We define the function
\begin{align*}
\widehat{\alpha}: \partial \Omega \to \R, \, \, y \mapsto \frac{\langle \nabla \phi(y), \nu(y) \rangle}{\phi(y)},
\end{align*}
where $\nu(y)$ denotes the outer normal on $\partial \Omega$. Since $f'(|y|) < 0$ and $\langle \frac{y}{|y|} , \nu \rangle < 1$, it holds
\begin{align*}
\langle \nabla \phi(y), \nu \rangle = \langle \frac{y}{|y|} f'(|y|), \nu \rangle = f'(|y|) \langle \frac{y}{|y|} , \nu \rangle > f'(|y|).
\end{align*}
Thus, $\widehat{\alpha}(y) > \frac{f'(|y|)}{f(|y|)}$. In view of \eqref{eq:mondecf}, $- \frac{f'(x)}{f(x)}$ is monotonically decreasing, which implies
\begin{align*}
\widehat{\alpha}(y) > \frac{f'(|y|)}{f(|y|)} > \frac{f'(r)}{f(r)} = \alpha.
\end{align*}
Furthermore, $\phi$ solves the equation
\begin{align*}
\begin{cases}
\Delta \phi + \lambda_1^\alpha(B_r^{\text{ext}}) \phi = 0 &\text{ in } \Omega^{\text{ext}} \subseteq B_r^{\text{ext}}, \\
- \partial_\nu \phi + \widehat{\alpha} \phi = 0 &\text{ on } \partial \Omega.
\end{cases}
\end{align*}
Thus, integration by parts yields
\begin{align*}
\lambda_1^\alpha(B_r^{\text{ext}}) &=  \frac{\int_{\Omega^{\text{ext}}} | \nabla \phi |^2 \, \mathrm{d} x +  \int_{\partial \Omega} \widehat{\alpha} \phi^2 \, \mathrm{d} S }{\int_{\Omega^{\text{ext}}}  \phi^2 \, \mathrm{d} x} >\frac{\int_{\Omega^{\text{ext}}} | \nabla \phi |^2 \, \mathrm{d} x +  \int_{\partial \Omega} \alpha  \phi^2 \, \mathrm{d}S }{\int_{\Omega^{\text{ext}}}  \phi^2 \, \mathrm{d}x} \\
& \geq \inf_{u \in W^{1,2}(\Omega^{\text{ext}})} \frac{\int_{\Omega^{\text{ext}}} | \nabla u |^2 \, \mathrm{d}x +  \alpha \int_{\partial \Omega}   u^2  \,\mathrm{d}S }{\int_{\Omega^{\text{ext}}}  u^2 \, \mathrm{d}x} = \lambda_1^\alpha(\Omega^{\text{ext}}).
\end{align*}
The inequality for the Steklov eigenvalue is a consequence of  $\lambda_1^\alpha(\Omega^{\text{ext}}) \leq \lambda_1^\alpha(B_r^{\text{ext}})$ and  $\lambda_1^\alpha(\Omega^{\text{ext}}) < 0 \Leftrightarrow \alpha < - \mu_1(\Omega^{\text{ext}})$.
\end{proof}

However, monotonicity with respect to domain inclusions does not hold true in general, even if we consider convex domains, as demonstrated in the following example.

\vspace{1ex}
\begin{remark}
For $a > 0$, let $S_a:= (a,a)^2 \in \mathbb{R}^2$. For $\alpha \to - \infty$, it holds
\begin{align*}
\lambda_1^\alpha(S_1^\text{ext}) < \lambda_1^\alpha(B_1^\text{ext}) \, \, \text{ and } \, \, \lambda_1^\alpha\left( S_{\frac{1}{\sqrt{2}}}^\text{ext} \right) < \lambda_1^\alpha(B_1^\text{ext}). 
\end{align*}
\end{remark}
\begin{proof}
In \cite[Theorem 1.1]{HYNEK},  K. Kova{\v{r}}{\'\i}k, and K. Pankrashkin show an asymptotic behavior for the Robin eigenvalue, which yields
\begin{align}\label{eq:KoPa}
\lambda_1^\alpha(\Omega^\text{{ext}}) = -\alpha^2 + (n-1) H_{\text{{max}}}(\Omega^{\text{{ext}}}) \alpha + o(\alpha) \, \, \text{ as } \, \, \alpha \to - \infty
\end{align}
for domains $\Omega \subseteq \mathbb{R}^n$ with $\partial \Omega \in \mathcal{C}^{1,1}$. Here, $H_{\text{max}}$ denotes the maximal mean curvature. Note that $H(\Omega^\text{ext}) = - H(\Omega)$, i.e. $H_\text{max}(\Omega^\text{ext})$ is the maximal (minimal in absolute value) curvature of $\partial \Omega^\text{ext}$. Thus, it holds
\begin{align*}
H_\text{max}(B_1^\text{ext}) = -1 \, \, \text{ and } \, \, H_\text{max}(S_a^\text{ext}) = 0 \, \text{ for all } a > 0.
\end{align*}
Based on the results from Section \ref{sec:cont}, we can approximate $S_a$ by smooth domains. Therefore, we obtain
\begin{align*}
H_\text{max}(B_1^\text{ext}) = - \alpha^2 - \alpha + o(\alpha)  \, \, \text{ and } \, \, H_\text{max}(S_a^\text{ext}) = - \alpha^2 + o(\alpha).
\end{align*}
Since $S_{\frac{1}{\sqrt{2}}} \subseteq B_1 \subseteq S_1$, this shows that in general there can be no monotonicity with respect to domain inclusion.
\end{proof}

\subsection{Global Optimization}\label{sec:glob}
In dimension $n=2$, the ball is the global maximizer of $\lambda_1^\alpha(\Omega^\text{ext})$ among all smooth, bounded, simply connected sets $\Omega$ with given perimeter or given area. In higher dimensions, this is not true anymore. D. Krejcirik and V. Lotoreichik  construct in \cite[Section 5]{krejcirik2016optimisation}  a convex domain $\Omega$ with $\partial \Omega \in \mathcal{C}^{1,1}$ and $|B_R| = |\Omega|$ such that 
\begin{align*}
 \lambda_1^\alpha ( \Omega^\text{ext}) >  \lambda_1^\alpha ( B_R^\text{ext}) \, \, \text{ as } \, \, \alpha \to - \infty.
\end{align*}
Following this idea, we construct for $n \geq 3$ a convex domain $E \subseteq \mathbb{R}^n$ with $|E| = |B_R|$ and $\partial E \in \mathcal{C}^\infty$ such that $ \lambda_1^\alpha ( E^\text{ext}) >  \lambda_1^\alpha ( B_R^\text{ext})$ for $\alpha$ sufficiently negative. For $a \in (0,1) $ and $n \geq 3$, we consider the hyperellipsoid
\begin{align*}
E(a):= \left\{ x \in \R^n: (a x_1)^2 + \sum_{k=2}^n x_k^2 \leq 1 \right\}.
\end{align*}
Due to \cite[Theorem B]{Leung}, the principal curvatures of $\partial E(a)$  are given by
\begin{align*}
\kappa_1 = \ldots = \kappa_{n-2} = \frac{1}{\sqrt{1+a^2(a^2-1)x_1^2}} \, \, \text{ and } \, \, \kappa_{n-1} =\frac{a^2}{\left( 1+ a^2(a^2-1) x_1^2 \right)^\frac{3}{2}}.
\end{align*}
Therefore, the mean curvature of $\partial E(a)^\text{ext}$ equals
\begin{align*}
H(x) = -\frac{\frac{n-2}{\sqrt{1+a^2(a^2-1)x_1^2}}+\frac{a^2}{\left( 1+ a^2(a^2-1) x_1^2 \right)^\frac{3}{2}}}{n-1}.
\end{align*}
Since $a^2-1 < 0$, this becomes maximal (minimal in absolute value) for $x_1 = 0$ with
\begin{align*}
H_{\text{max}}(E(a)^\text{ext}) = -\frac{n-2+a^2}{n-1}.
\end{align*}
It holds that $|E(a)| = \frac{1}{a} |B_1| =  \left| B_{\frac{1}{\sqrt[n]{a}}} \right|$. Thus, we want to compare $\lambda_1^\alpha \left( B^\text{ext} \right)$ and $\lambda_1^\alpha ( E(a)^\text{ext})$, where $B:=B_{\frac{1}{\sqrt[n]{a}}}$. Using \eqref{eq:KoPa}, we obtain
\begin{align*}
\lambda_1^\alpha ( E(a)^\text{ext}) &= - \alpha^2 - (n-2+a^2) \alpha + o(\alpha),\\
\lambda_1^\alpha ( B^\text{ext}) &= - \alpha^2 - \alpha (n-1)\sqrt[n]{a} + o(\alpha).
\end{align*} 
For small $a$ and $n \geq 3$, it holds  
\begin{align}\label{eq:Hynak}
n-2+a^2 > (n-1)\sqrt[n]{a}.
\end{align}
Thus, it holds that $\lambda_1^\alpha ( E(a)^\text{ext}) > \lambda_1^\alpha \left( B^\text{ext} \right)$ for $\alpha$ sufficiently negative. Thus, the ball is not in general the maximizer of the first Robin eigenvalue among all smooth convex domains with given measure.

If the ball maximized $\lambda_1^\alpha (\Omega^\text{ext})$ among all smooth domains with given perimeter, every domain $\Omega$ with $| \partial \Omega| = |\partial B_{R_1}|$ would fulfill $\lambda_1^\alpha (\Omega^\text{ext}) < \lambda_1^\alpha (B_{R_1}^\text{ext})$. Let $B_{R_2}$ be the ball such that $| \Omega| = |B_{R_2}|$. The classic isoperimetric inequality yields $R_2 \leq R_1$. Thus, Theorem \ref{theo:tiz} yields $\lambda_1^\alpha(R_1^\text{ext}) < \lambda_1^\alpha(R_2^\text{ext})$ and we obtain 
\begin{align*}
\lambda_1^\alpha(\Omega^\text{ext}) < \lambda_1^\alpha(R_1^\text{ext}) < \lambda_1^\alpha(R_2^\text{ext}),
\end{align*}
which means that the ball also maximizes $\lambda_1^\alpha (\Omega^\text{ext})$ among all smooth domains with given measure. So the ball is neither the global maximizer of the first Robin eigenvalue under measure nor under perimeter constrains for all $\alpha$.

 However, \eqref{eq:Hynak} is not true for $a$ close to $1$, i.e. $E(a)$ close to $B_1$. Hence, the ball might be a local maximizer of $\lambda_1^\alpha(\Omega^\text{ext})$.


\section{Domain Variations in Exterior Domains}\label{sec:DomVat}
In the absence of a global approach, our study focuses on the effect of local perturbations on the eigenvalue. For bounded domains, the concept of domain variations is explained in \cite{Henrot} by A. Henrot and M. Pierre. Similarly, C. Bandle and A. Wagner apply this approach in \cite{Wagner1}, demonstrating its applicability to a range of problems. In this paper, we proceed in a similar way. Notably, some of the results presented in Section \ref{sec:DomVat} have already been published in \cite{BundPhD}.
\subsection{Basic Concepts}\label{sec:DomVarInt}
Let $\Omega \subseteq \mathbb{R}^n$ be a connected, bounded, and smooth domain. For a fixed $t_0 > 0$, let $\Phi: (-t_0,t_0) \times \mathbb{R}^n \setminus \Omega \to  \mathbb{R}^n$ be smooth in $t$ and $x$. Then, we call $\left( \Omega_t^\text{ext} \right)_{|t| < t_0}$, given by
\begin{align*}
\Omega_t^{\text{ext}} := \Phi(t,\Omega^{\text{ext}}),
\end{align*}
a  \textit{family of perturbations} of $\Omega^{\text{ext}}$. If $\Omega^{\text{ext}}$ maximizes a functional $\mathcal{G}: A \to \mathbb{R}$, where $A$ is a suitable collection of subsets of $\mathbb{R}^n$ and $\Omega_t^\text{ext} \in A$ for all $|t| < t_0$, then the mapping $t \mapsto \mathcal{G}(\Omega_t^\text{ext})$ must have a maximum at $t=0$. 

To apply this to $\lambda_1^\alpha (\Omega)$, we assume that $ \partial \Omega \in \mathcal{C}^{2, \beta}$, $0 < \beta \leq 1$, and $\Phi$ is of the form
\begin{align}\label{eq:Phiext}
\Phi(t,x) = x + t v(x) + \frac{t^2}{2} w(x) + f(t,x),
\end{align}
where $v,w, f(t, \cdot) \in \mathcal{C}^{2, \beta}(\R^n \setminus \Omega)$ and $f(t,x) = o(t^2)$ as $t \to 0$ . In addition, we assume
\vspace{1ex}
\begin{enumerate}[(a)]
\item the mapping $\Phi(t,\cdot) - id$ has compact support, \label{enum:1}
\item $ \left| \left| \frac{\Phi(t,\cdot)-id}{t} \right| \right|_{\mathcal{C}^{1}(\Omega^\text{ext})} \leq C(\Omega)$ for all $|t| < t_0$ and $C(\Omega)$ is defined in Lemma \ref{lemma:meanvalext}, \label{enum:ot1}
\item $\Phi$ is a \textit{Hadamard perturbation}, i.e. $v = \langle v, \nu \rangle \nu$ and $w = \langle w, \nu \rangle \nu$. \label{enum:perturexta}
\end{enumerate}
\vspace{1ex}
The Jacobian matrix of $\Phi(t, \cdot)$, i.e. $ \left( D{\Phi(t, x)} \right)_{i,j} := \frac{\partial \Phi(t, x)_i}{\partial x_j}$, equals
\begin{align*}
D {\Phi(t, x)} = I + t Dv(x) + \frac{t^2}{2} Dw(x) + D f(t,x).
\end{align*}
Applying Jacobi's formula for determinants, as in \cite[Section A.1]{Wagner1}, results in
\begin{align*}
J(t,x) :=&\, \operatorname{det} \left( D {\Phi(t, x)} \right) \\
=& \,1 + t \operatorname{div}(v(x)) \nonumber + \frac{t^2\left[  \operatorname{div}(v(x))^2 + \operatorname{div}(w(x)) - \operatorname{tr} \left(  (D v(x))^2 \right) \right]}{2}  + g(t,x),
\end{align*}
where $g(t,x) = o(t^2)$. Given the condition \eqref{enum:ot1}, there exists a constant $c>0$ that remains independent of the specific perturbation, such that $ \left| g(t,x) \right| \leq c |t|$. Consequently, we can choose $t_0$ such that $J(t,x) > 0$ for all $|t| < t_0$ and for all $x \in \Omega^{\text{ext}}$. Thus, $\Phi(t, \cdot) : \Omega^{\text{ext}} \to \Omega_t^{\text{ext}}$ is a local diffeomorphism. To show that $\Phi(t, \cdot)$ is a global diffeomorphism, it remains to show that it is bijective.

\vspace{1ex}
\begin{lemma}\label{lemma:meanvalext}
Let $\operatorname{geo}:\partial \Omega \times \partial \Omega \to \mathbb{R}$ denote the geodesic distance between to points on the boundary. We define the quantity 
\begin{align*}
L(\Omega) := \sup_{x, y \in \partial \Omega, x \neq y} \frac{\operatorname{geo}(x,y)}{|x-y|}.
\end{align*}
If we choose $C(\Omega) < \frac{1}{L(\Omega) t_0}$ in condition \eqref{enum:ot1}, then $\Phi(t, \cdot) : \Omega^{\text{ext}} \to \Omega_t^{\text{ext}}$ is bijective. 
\end{lemma}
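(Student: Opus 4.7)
The plan is to observe that surjectivity is automatic from the definition and then reduce the lemma to injectivity, which I would obtain from a path-integral Lipschitz estimate controlled by the geometric constant $L(\Omega)$.

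By definition $\Omega_t^{\text{ext}} = \Phi(t, \Omega^{\text{ext}})$, so the map $\Phi(t, \cdot): \Omega^{\text{ext}} \to \Omega_t^{\text{ext}}$ is surjective for free; only injectivity requires an argument. Writing $\Psi(t, x) := \Phi(t, x) - x$ and invoking condition (b), the gradient of $\Psi(t, \cdot)$ is bounded on $\Omega^{\text{ext}}$ by $|t|\, C(\Omega)$. The main tool will be the following geometric claim: any two distinct points $x_1, x_2 \in \overline{\Omega^{\text{ext}}}$ can be joined by a rectifiable path contained in $\overline{\Omega^{\text{ext}}}$ of length at most $L(\Omega)\,|x_1 - x_2|$.

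To construct such a path I would follow the Euclidean segment $[x_1, x_2]$ until it first hits $\partial \Omega$ at a point $y_1$ (if at all), then replace the middle portion by a boundary geodesic to the point $y_2$ where the segment last leaves $\partial \Omega$, and finally continue along the segment from $y_2$ to $x_2$. By the definition of $L(\Omega)$, the boundary-geodesic detour has length at most $L(\Omega)\,|y_1 - y_2|$, so the whole path lies in $\overline{\Omega^{\text{ext}}}$ and has length at most $L(\Omega)\bigl(|x_1 - y_1| + |y_1 - y_2| + |y_2 - x_2|\bigr) = L(\Omega)\,|x_1 - x_2|$; the degenerate case where $[x_1, x_2]$ already lies in $\overline{\Omega^{\text{ext}}}$ is covered since $L(\Omega) \geq 1$. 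Integrating the bound on $\nabla \Psi(t, \cdot)$ along arc length then yields
\begin{align*}
|\Psi(t, x_1) - \Psi(t, x_2)| \leq |t|\, C(\Omega)\, L(\Omega)\, |x_1 - x_2|.
\end{align*}

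Combining this with the reverse triangle inequality gives
\begin{align*}
|\Phi(t, x_1) - \Phi(t, x_2)| \geq \bigl(1 - |t|\, C(\Omega)\, L(\Omega)\bigr)\, |x_1 - x_2|,
\end{align*}
and the hypotheses $|t| < t_0$ together with $C(\Omega) < 1/(L(\Omega)\, t_0)$ force the coefficient on the right to be strictly positive, so $\Phi(t, x_1) = \Phi(t, x_2)$ implies $x_1 = x_2$. The main obstacle is the geometric path-length lemma: one must verify that replacing the interior chord $[y_1, y_2]$ by a boundary geodesic produces a rectifiable curve staying in $\overline{\Omega^{\text{ext}}}$ while preserving $L(\Omega)$-comparability with $|x_1 - x_2|$, and handling the possibility that $[x_1, x_2]$ crosses $\partial \Omega$ several times is cleanest by taking the first and last such crossings, as above.
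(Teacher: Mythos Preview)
Your proposal is correct and follows essentially the same approach as the paper: both argue injectivity by integrating the derivative of $\Phi(t,\cdot)-\mathrm{id}$ along a path in $\overline{\Omega^{\text{ext}}}$ of length at most $L(\Omega)\,|x_1-x_2|$, and then conclude via the bound from condition~\eqref{enum:ot1}. Your version is in fact somewhat more explicit than the paper's about how to build the path (first and last boundary crossings joined by a boundary geodesic), whereas the paper simply asserts that such a curve exists; otherwise the arguments are the same.
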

\begin{proof}
We start by assuming that for a fixed $|t| < t_0$, there exist $x_1, x_2 \in \Omega^\text{ext}$ such that $\Phi(t,x_1) = \Phi(t,x_2)$ and $x_1 \neq x_2$. In addition, we define $\widehat{\Phi}(t,x):= \frac{\Phi(t,x)-x}{t}$. Then, 
\begin{align*}
|x_1 - x_2| = \left| t \frac{x_1 - x_2 + \Phi(t,x_2) - \Phi(t,x_1)}{t} \right| = |t| \left| \widehat{\Phi}(t,x_1) - \widehat{\Phi}(t,x_2) \right|.
\end{align*}
We can construct a piecewise continuously differentiable curve $\psi: [0,1] \to \mathbb{R} \setminus \Omega$ such that $\psi(0) = x_1,$ $\psi(1) = x_2$ and $ |\psi '(s)|$ is constant. Specifically, this curve can be chosen such that  $| \psi ' (s) |  \leq L(\Omega) |x_1 - x_2|$. With $g:[0,1] \to \mathbb{R}^n, \, g(s) := \widehat{\Phi}(t,{\psi}(s))$ ,we have 
\begin{align*}
|x_1 - x_2| &= |t| \left| \widehat{\Phi}(t,x_1) - \widehat{\Phi}(t,x_2) \right| = |t| \left| g(0) - g(1) \right| =|t| \left| \int_0^1 g'(s) \, \mathrm{d}s \right| \\
&< t_0 \int_0^1 || D \widehat{\Phi}(t, \cdot)||_{L^\infty(\Omega^\text{ext})} | \psi'(s)|  \, \mathrm{d}s < t_0 C(\Omega) L(\Omega) |x_2 - x_1|.
\end{align*}
Thus, the choice of $C(\Omega) < \frac{1}{L(\Omega) t_0}$ contradicts $x_1 \neq x_2$.
\end{proof}

Hence, for $f \in L^1(\Omega_t^{\text{ext}})$, it holds
\begin{align*}
\int_{\Omega_t^{\text{ext}}} f(x) \, \mathrm{d}x = \int_{\Omega^{\text{ext}}} f \left( \Phi(t,x) \right) J(t,x) \, \mathrm{d}x   .
\end{align*}
This formula is a direct consequence of the transformation formula applied to bounded domains and condition \eqref{enum:1}. Similarly, we can transform boundary integrals. Assuming $\partial \Omega$ is smooth, it can be represented by local coordinates, i.e. let $V \subseteq \R^n$ be an open set such that $V \cap \partial \Omega = \{ \widetilde{x}(\xi) : \xi \in U \subseteq \R^{n-1} \}$.
Define the matrix $G\in \mathbb{R}^{(n-1) \times (n-1)}$ by $G_{i,j}  := \langle \widetilde{x}_{\xi_i},\widetilde{x}_{\xi_j} \rangle$, where $ \widetilde{x}_{\xi_i}$ is the derivative of $\widetilde{x}(\xi)$ with respect to $ \xi_i$. Then, the surface element is given by $\mathrm{d}S = \sqrt{\operatorname{det}(G)} \, \mathrm{d} \xi$, i.e.
\begin{align*}
\int_{V \cap \partial \Omega} f(x) \,  \mathrm{d}S = \int_U f(\widetilde{x}(\xi)) \sqrt{\operatorname{det}(G(\xi))} \, \mathrm{d} \xi 
\end{align*}
holds for any continuous function $f$. There is a finite covering of open sets $\{ V_i \}_{i=1}^m$ and a partition of unity $\{ p_i \}_{i=1}^m$ such that
\begin{align*}
\int_{\partial \Omega} f(x) \,  \mathrm{d}S = \sum_{i=1}^m \int_{V_i \cap \partial \Omega} f(x) p_i(x) \,  \mathrm{d}S.
\end{align*}
This yields a formula for the surface element on the entire boundary. The boundary of $\Omega_t$ can locally be written as
\begin{align*}
 \left\{ \widetilde{y}(\xi) = \widetilde{x}(\xi) + t \widetilde{v}(\xi) + \frac{t^2}{2} \widetilde{w}(\xi) + o(t^2) : \xi \in U \subseteq \R^{n-1} \right\}.
\end{align*}
Thus, the surface element on $\partial \Omega_t$ equals $\mathrm{d} S_t = \sqrt{\operatorname{det}(K)} \, \mathrm{d} \xi$, where $K \in \mathbb{R}^{(n-1) \times (n-1)}$ is defined as $K_{i,j} = \langle \widetilde{y}_{\xi_i},  \widetilde{y}_{\xi_j} \rangle$. To express $\mathrm{d} S_t$ in the form $\mathrm{d} S_t = m(t,x) \,  \mathrm{d} S$, we introduce the matrices $A, B \in \mathbb{R}^{(n-1) \times (n-1)}$, given by
\begin{align*}
A_{i,j} :=& \langle \widetilde{x}_{\xi_i},  \widetilde{v}_{\xi_j} \rangle + \langle \widetilde{x}_{\xi_j},  \widetilde{v}_{\xi_i} \rangle, \\
B_{i,j} :=& 2 \langle \widetilde{v}_{\xi_i},  \widetilde{v}_{\xi_j} \rangle + \langle \widetilde{w}_{\xi_i},  \widetilde{x}_{\xi_j} \rangle + \langle \widetilde{x}_{\xi_i},  \widetilde{w}_{\xi_j} \rangle.
\end{align*}
Then, $K=G+tA + \frac{t^2}{2} B +o(t^2)= G \left( I + tG^{-1}A + \frac{t^2}{2} G^{-1}B \right) + o(t^2)$. This implies
\begin{align}\label{eq:surfaceelement}
m(t,x) = \frac{\sqrt{\det(K)}}{\sqrt{\det(G)}} = \left[ \operatorname{det} \left( I + tG^{-1}A + \frac{t^2}{2} G^{-1}B \right) \right]^\frac{1}{2} + o(t^2).
\end{align}
Simplifications of $m(t,x)$ can be found in \cite{Wagner1}. Thus, for $f \in \mathcal{C}^0(\partial \Omega_t)$, we obtain
\begin{align*}
\int_{\partial \Omega_t} f(x) \,  \mathrm{d}S_t = \int_{\partial \Omega} f \left(   \Phi(t,x) \right) m(t,x)\,  \mathrm{d}S.
\end{align*}
Condition \eqref{enum:perturexta} imposes no restrictions, as any perturbation can be expressed as a Hadamard perturbation by a reparameterization of the boundary, as demonstrated in \cite[Theorem 2.1]{Wagner1}. Moreover, for $f \in W^{1,2}(\Omega^{\text{ext}})$, $g \in W^{1,2}(\Omega^{\text{ext}};\R^n)$, it holds
\begin{align*}
\int_{\Omega^{\text{ext}}} f \operatorname{div}(g) \, \mathrm{d}x = \int_{ \partial \Omega} f \langle g, -\nu \rangle\,  \mathrm{d}S - \int_{\Omega^{\text{ext}}} \langle g, \nabla f \rangle \, \mathrm{d}x,
\end{align*}
where $\nu$ denotes the outward-pointing normal to $\Omega$, meaning $-\nu$ represents the outer unit normal of the exterior domain.

The derivative of a function $u$ with respect to $t$, called the \textit{first variation}, is denoted by $\dot{u}$. We define the volume or measure of $\Omega_t$ as $V(t) :=  \left| \mathbb{R}^n \setminus \Omega_t^\text{ext}  \right|$. A perturbation satisfying $\dot{V}(0)= 0$, is called \textit{measure preserving of first order}. Using the divergence theorem and considering $\dot{J}(0,x) = \operatorname{div}(v(x))$, this can be characterized as 
\begin{align*}
\dot{V}(0) = 0 \Leftrightarrow \int_{\partial \Omega} \langle v, \nu \rangle \, \mathrm{d}S = 0.
\end{align*}
If additionally $\ddot{V}(0) = 0$, the perturbation is called \textit{measure preserving of second order}. The \textit{barycenter} of a bounded domain $\Omega \subseteq \mathbb{R}^n$ is defined as $\frac{1}{| \Omega|}\int_{\Omega} x \, \mathrm{d}x$. A perturbation satisfies the \textit{barycenter condition} if the barycenter is unchanged in first order. As in \cite[Definition 2.5]{Wagner1}, this condition is, for measure preserving perturbations, equivalent to
\begin{align*}
\int_{\partial \Omega} \langle v , \nu \rangle x \, \mathrm{d}S = 0.
\end{align*}
We often consider a family of perturbations $\left( \Omega_t \right)_{|t| < t_0}$, and a family of functions $\left( u(y,t) \right)_{|t| < t_0}$, where $u( \cdot , t): \Omega_t^\text{ext} \to \mathbb{R}$. Since the domain of $u$ depends on $t$, it is useful to consider $\widetilde{u}(x,t) := u( \Phi(t,x), t) : \Omega^\text{ext} \to \mathbb{R}$. Assuming $\widetilde{u}$ is smooth in $t$,  it holds
\begin{align*}
\dot{\widetilde{u}}(x,t) = \frac{\mathrm{d}}{\mathrm{d}t} u(\Phi(t,x),t) = \langle \nabla_y u(y,t) \big|_{y=\Phi(t,x)} , \partial_t \Phi(t,x)  \rangle + \partial_t u(y,t) \big|_{y = \Phi(t,x)}.
\end{align*}
We define the \textit{shape derivative} as $u'(x) := \partial_t u(x,t) \big|_{t=0}$ and obtain
\begin{align}\label{eq:defshape}
\dot{\widetilde{u}}(x,0) = \langle \nabla u(x,0), v(x) \rangle + u'(x).
\end{align}

\subsection{Domain Variation for the Robin Eigenvalue}\label{subsubsec:Eig}
To analyze the behavior of the first Robin eigenvalue in exterior domains, we apply the methods outlined in  Section \ref{sec:DomVarInt}. To this end, consider a family of perturbations $\Omega_t^{\text{ext}}:= \Phi(t,\Omega^{\text{ext}})$, satisfying conditions \eqref{enum:1}-\eqref{enum:perturexta}, and let $\alpha < \alpha^*(\Omega^{\text{ext}})$. In view of  Lemma \ref{lemma:contf1}, we can choose $t_0$ such that $\alpha < \alpha^*(\Omega_t^{\text{ext}})$ holds for all $|t|< t_0$. Consequently,
\begin{align}\label{eq:problemgest}
\begin{cases}
\Delta_y {u}(y,t) + \lambda {u}(y,t) = 0  & \, \text{ in } \Omega_t^{\text{ext}}, \\
-\partial_{\nu_t} {u}(y,t) + \alpha {u}(y,t) = 0 & \,  \text{ on } \partial \Omega_t,
\end{cases}
\end{align}
has a simple, negative eigenvalue $\lambda_1^\alpha(t)$ for all $|t| < t_0$. We proceed as in \cite[Section 4]{Wagner1} to transform \eqref{eq:problemgest} to a problem on $\Omega^\text{ext}$, i.e. transforming the equation
\begin{align*}
\lambda_1^\alpha (t) \int_{\Omega_t^\text{ext}} u(y,t)^2 \, \mathrm{d}y =\int_{\partial \Omega_t} \alpha u(y,t)^2\,  \mathrm{d}S_t + \int_{\Omega_t^\text{ext}} | \nabla u(y,t) |^2 \, \mathrm{d} y
\end{align*}
into an equation involving integrals over $\Omega^\text{ext}$ and $\partial \Omega$.   By introducing the transformations $y = \Phi(t,x)$ and $\widetilde{u}(x,t) = u( \Phi(t,x) , t)$ we, immediately obtain
\begin{align*}
\int_{\Omega_t^\text{ext}} u(y,t)^2 \, \mathrm{d}y &= \int_{\Omega^\text{ext}} \widetilde{u}(x,t)^2 J(t,x) \, \mathrm{d}x, \\
\int_{\partial \Omega_t} u(y,t)^2\,  \mathrm{d}S_t &= \int_{\partial \Omega} \widetilde{u}(x,t)^2 m(t,x)\,  \mathrm{d}S.
\end{align*}
For the last integral, we use the chain rule, where we write $\partial_{x_i}$ short for $\frac{\partial}{\partial x_i}$. Denoting $\Psi_t$ as the inverse of $\Phi(t, \cdot)$, and using $\frac{\partial x_i}{\partial y_k} =\partial_{y_k} (\Psi_t)_{i} \left( \Phi(t,x) \right)$, we have
\begin{align*}
\partial_{y_k} u(y,t) = \partial_{y_k} \widetilde{u}(\Psi_t(y),t)  = \sum_{i=1}^n \partial_{x_i} \widetilde{u}(x,t) \partial_{y_k} (\Psi_t)_{i} \left( \Phi(t,x) \right),
\end{align*}
which leads to
\begin{align*}
| \nabla u(y,t) |^2  = \sum_{i,j,k=1}^n \partial_{y_k} (\Psi_t)_{i} (\Phi(t,x)) \partial_{x_i} \widetilde{u}(x,t) \partial_{y_k} (\Psi_t)_{j} (\Phi(t,x))  \partial_{x_j} \widetilde{u}(x,t).
\end{align*}
Thus, we can define
\begin{align*}
A_{i,j}(t,x):= \sum_{k=1}^n \left[ \partial_{y_k} (\Psi_t)_{i} (\Phi(t,x)) \partial_{y_k} (\Psi_t)_{j} (\Phi(t,x))  \right] J(t,x)
\end{align*}
and deduce
\begin{align*}
\int_{\Omega_t^\text{ext}} | \nabla u(y,t) |^2 \,\mathrm{d}y = \int_{\Omega^\text{ext}} \sum_{i,j=1}^n \partial_{x_i} \widetilde{u} (x,t) \partial_{x_j} \widetilde{u} (x,t) A_{i,j}(t,x) \, \mathrm{d}x.
\end{align*} 
This results in
\begin{align}\label{eq:lambdadarst}
\lambda_1^\alpha(t) = \frac{\int_{\Omega^\text{ext}} \sum_{i,j=1}^n \partial_{x_i} \widetilde{u} (x,t) \partial_{x_j} \widetilde{u} (x,t) A_{i,j}(t,x) \, \mathrm{d}x + \alpha \int_{\partial \Omega} \widetilde{u}(x,t)^2 m(t,x)\,  \mathrm{d}S}{\int_{\Omega^\text{ext}} \widetilde{u}(x,t)^2 J(t,x) \, \mathrm{d}x}
\end{align}
or alternatively, if we define
\begin{align*}
L_{A(t,x)} := \sum_{i,j=1}^n \partial_{x_j} \left( A_{i,j}(t,x) \partial_{x_i} \right) \, \text { and } \, \partial_{\nu_{A(t,x)}} := \sum_{i,j=1}^n \nu_i A_{i,j}(t,x) \partial_{x_j},
\end{align*}
we obtain that \eqref{eq:problemgest} is equivalent to
\begin{align}\label{eq:problemtrafo}
\begin{cases}
L_{A(t,x)} \widetilde{u}(x,t) + \lambda^\alpha(t) \widetilde{u}(x,t) J(t,x) = 0  & \, \text{ in } \Omega^{\text{ext}}, \\
-\partial_{\nu_{A(t,x)}} \widetilde{u}(x,t) + \alpha m(t,x) \widetilde{u}(x,t) = 0 & \,  \text{ on } \partial \Omega.
\end{cases}
\end{align}
In \cite[Theorem 5.5, p. 489]{Chow}, it is demonstrated that when $\lambda_1^\alpha$ is a simple eigenvalue, the associated eigenfunction is analytic in $t$. Moreover, $J, A$ and $m$ are smooth in $t$ since $\partial \Omega$ and $\Phi$ are smooth.  Consequently, we are able to differentiate \eqref{eq:lambdadarst} with respect to $t$.

\subsubsection{First and Second Variation}\label{sec:firandsecvar}

The expressions for the first and second variation of $\lambda_1^\alpha(t)$ can be obtained analogously to those for the Robin eigenvalue on bounded domains, as discussed in \cite[Section 11]{Wagner1}. The proofs of Corollary \ref{koro:firstvarlambdat0} and Theorem \ref{satz:dotlambdaext}  are given in Section \ref{secA2}. Notably, the formula for the first variation is independent of the shape derivative, allowing a straightforward evaluation if the eigenfunction is known.
\vspace{1ex}
\begin{theorem}\label{satz:dotlambdaext}
The first and second variation of $\lambda_1^\alpha$ are given by
\begin{align}\label{eq:lambdadot1}
\dot{\lambda}_1^\alpha(t) =& \int_{\Omega^{\text{ext}}} \left( \nabla \widetilde{u}(x,t) \right)^T \dot{A}(t,x) \nabla \widetilde{u}(x,t) - \lambda_1^\alpha(t)  {\widetilde{u}}(x,t)^2  \dot{J}(t) \, \mathrm{d}x \nonumber \\
&+\alpha \int_{\partial \Omega}  \widetilde{u}(x,t)^2 \dot{m}(t,x)\,  \mathrm{d}S, \\
\ddot{\lambda}_1^\alpha(t) =&\int_{\Omega^{\text{ext}}} \left( \nabla \widetilde{u}(x,t) \right)^T \ddot{A}(t,x) \nabla \widetilde{u}(x,t)    -2  (\nabla \dot{\widetilde{u}}(x,t))^T {A}(t,x) \nabla \dot{\widetilde{u}}(x,t)  \, \mathrm{d}x \nonumber \\
&+\alpha \int_{\partial \Omega}  \widetilde{u}(x,t)^2 \ddot{m}(t,x)   -2    m(t,x) \dot{\widetilde{u}}(x,t)^2 \,  \mathrm{d}S \nonumber \\
&- \dot{\lambda}_1^\alpha(t) \int_{\Omega^{\text{ext}}} {\widetilde{u}}(x,t)^2  \dot{J}(t,x) - \dot{\widetilde{u}}(x,t)  \widetilde{u}(x,t) J(t,x)  \, \mathrm{d}x \nonumber \\
&- \lambda_1^\alpha(t) \int_{\Omega^{\text{ext}}} {\widetilde{u}}(x,t)^2  \ddot{J}(t,x) - 2\dot{\widetilde{u}}(x,t)^2 J(t,x) \, \mathrm{d}x. \nonumber
\end{align}
\end{theorem}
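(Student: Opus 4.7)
The plan is to follow the differentiation strategy of \cite[Section 11]{Wagner1}, working with the fixed-domain Rayleigh quotient representation \eqref{eq:lambdadarst}. Because $\lambda_1^\alpha(t)$ is a simple, isolated eigenvalue (thanks to $\alpha<\alpha^*(\Omega^\text{ext})$ and Theorem \ref{satz:alpha*var'}), the transformed eigenfunction $\widetilde{u}(\cdot,t)$ is analytic in $t$ by \cite[Theorem 5.5, p.~489]{Chow}, so term-by-term differentiation is legitimate. I would normalize $\widetilde{u}(\cdot,t)$ so that $\int_{\Omega^\text{ext}}\widetilde{u}(x,t)^2 J(t,x)\,\mathrm{d}x\equiv 1$, which identifies $\lambda_1^\alpha(t)$ with the numerator of \eqref{eq:lambdadarst}.

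To obtain $\dot{\lambda}_1^\alpha(t)$, I would differentiate this simplified expression in $t$. Besides the explicit contributions producing the $\dot{A}$ and $\dot{m}$ terms of the target formula, two cross terms containing $\dot{\widetilde{u}}$ appear. The key identity is the weak form of the transformed problem \eqref{eq:problemtrafo} tested against $\dot{\widetilde{u}}$,
\begin{align*}
\int_{\Omega^\text{ext}}(\nabla\widetilde{u})^T A\,\nabla\dot{\widetilde{u}}\,\mathrm{d}x+\alpha\int_{\partial\Omega}\widetilde{u}\,\dot{\widetilde{u}}\,m\,\mathrm{d}S=\lambda_1^\alpha(t)\int_{\Omega^\text{ext}}\widetilde{u}\,\dot{\widetilde{u}}\,J\,\mathrm{d}x,
\end{align*}
which eliminates the $\dot{\widetilde{u}}$-cross term up to a multiple of $\lambda_1^\alpha\int\widetilde{u}\dot{\widetilde{u}}J$. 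That remainder is converted into $-\lambda_1^\alpha\int\widetilde{u}^2\dot{J}$ by differentiating the normalization constraint $\int\widetilde{u}^2 J\,\mathrm{d}x=1$, which yields exactly the stated formula for $\dot{\lambda}_1^\alpha(t)$.

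The second variation is obtained by differentiating $\dot{\lambda}_1^\alpha(t)$ once more. Besides the natural $\ddot{A},\ddot{m},\ddot{J}$ contributions, this creates a cross term $2\int(\nabla\dot{\widetilde{u}})^T\dot{A}\nabla\widetilde{u}\,\mathrm{d}x$ together with several $\widetilde{u}\,\ddot{\widetilde{u}}$ and $\widetilde{u}\,\dot{\widetilde{u}}$ contributions. The $\widetilde{u}\,\ddot{\widetilde{u}}$ terms are removed by differentiating the displayed weak identity once more in $t$, which re-expresses them in terms of $\int(\nabla\dot{\widetilde{u}})^T A\nabla\dot{\widetilde{u}}\,\mathrm{d}x$, $\int m\,\dot{\widetilde{u}}^2\,\mathrm{d}S$, $\int J\,\dot{\widetilde{u}}^2\,\mathrm{d}x$, and the mixed products with $\dot{A},\dot{m},\dot{J}$. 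The residual $\widetilde{u}\,\dot{\widetilde{u}}$ contributions are then absorbed using the second derivative of the normalization, namely $\int(2\dot{\widetilde{u}}^2 J+2\widetilde{u}\,\ddot{\widetilde{u}}\,J+4\widetilde{u}\,\dot{\widetilde{u}}\,\dot{J}+\widetilde{u}^2\ddot{J})\,\mathrm{d}x=0$, combined with the first-variation formula to account for the $\dot{\lambda}_1^\alpha$ prefactor in the target.

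The main obstacle is this second-variation bookkeeping: many intermediate terms must be reorganized using the once- and twice-differentiated weak form together with the second-order normalization identity, and signs are easy to drop, especially because the outward unit normal of $\Omega^\text{ext}$ along $\partial\Omega$ points \emph{into} $\Omega$ and enters through the conormal $\partial_{\nu_{A(t,x)}}$ in \eqref{eq:problemtrafo}. Careful tracking of these sign conventions, together with the Hadamard assumption $v=\langle v,\nu\rangle\nu$ (so that shape derivatives couple only to the normal component of the velocity), should leave only routine algebraic simplification to recover the stated expression for $\ddot{\lambda}_1^\alpha(t)$.
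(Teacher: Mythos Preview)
Your plan is essentially the paper's proof: normalize so that the denominator of \eqref{eq:lambdadarst} equals $1$, differentiate, and remove the $\dot{\widetilde u}$-cross terms via the weak form of \eqref{eq:problemtrafo} (respectively its $t$-derivative) tested against $\dot{\widetilde u}$, together with the differentiated normalization identity. One small clarification: once $\dot\lambda_1^\alpha(t)$ is in the form \eqref{eq:lambdadot1} it contains no $\dot{\widetilde u}$, so differentiating again produces only $\dot{\widetilde u}$-cross terms (no $\widetilde u\,\ddot{\widetilde u}$ terms), and neither the second-order normalization identity nor the Hadamard hypothesis is needed at this stage---those enter only in the subsequent simplification for the ball.
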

\vspace{1ex}
Evaluating $\dot{\lambda}_1^\alpha(t)$ at $t=0$ allows a significant simplification. The resulting formula depends only on the eigenfunction, $\alpha$, and the mean curvature of $\partial \Omega$. We use the notation $u(x):=\widetilde{u}(x,0)$  and omit arguments of the function where context permits.

\vspace{1ex}
\begin{korollar}\label{koro:firstvarlambdat0}
It holds
\begin{align}\label{eq:firstvarlambdat0}
\dot{\lambda}_1^\alpha(0) =& -\int_{\partial \Omega} \langle v, \nu \rangle \left[ | \nabla u|^2 - 2 \alpha^2 u^2 +\alpha u^2 (n-1) H(\Omega^\text{ext})  - \lambda_1^\alpha(0) u^2 \right]\,  \mathrm{d}S,
\end{align}
where $H(\Omega^\text{ext}) = - H(\Omega)$ denotes the mean curvature of $\partial \Omega^\text{ext}$. 
\end{korollar}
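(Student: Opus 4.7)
The plan is to start from the expression for $\dot{\lambda}_1^\alpha(t)$ in Theorem \ref{satz:dotlambdaext}, evaluate it at $t=0$, and reduce the resulting volume integral to a boundary integral via integration by parts, using only $\Delta u = -\lambda_1^\alpha(0) u$ in $\Omega^\text{ext}$, the Robin condition $\partial_\nu u = \alpha u$ on $\partial\Omega$, and the Hadamard structure of $v$. Note that the shape derivative $u'$ does not appear in Theorem \ref{satz:dotlambdaext}, so the entire calculation takes place on the fixed domain $\Omega^\text{ext}$.

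First I would identify the three first-order quantities. Jacobi's formula gives $\dot{J}(0,x) = \operatorname{div}(v)$. For $\dot{A}(0,x)$, the identity $\Psi_t \circ \Phi(t,\cdot) = \operatorname{id}$ yields $D\Psi_t(\Phi(t,x)) = (D\Phi(t,x))^{-1}$, so $A(t,x) = ((D\Phi)(D\Phi)^T)^{-1} J(t,x)$; expanding $D\Phi = I + tDv + O(t^2)$ and $J = 1 + t\operatorname{div}(v) + O(t^2)$ produces $\dot{A}(0,x) = \operatorname{div}(v)\,I - Dv - (Dv)^T$. For $\dot{m}(0,x)$, I would use \eqref{eq:surfaceelement} together with the Hadamard hypothesis $v = \langle v,\nu\rangle\nu$ and the relation $\langle \widetilde{x}_{\xi_i},\nu_{\xi_j}\rangle = -h_{ij}$ between tangent derivatives and the second fundamental form, yielding the classical shape derivative $\dot{m}(0,x) = (n-1)H(\Omega)\langle v,\nu\rangle = -(n-1)H(\Omega^\text{ext})\langle v,\nu\rangle$.

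The core calculation is to simplify
\[
\int_{\Omega^\text{ext}} \Bigl[ \operatorname{div}(v)\bigl(|\nabla u|^2 - \lambda_1^\alpha(0) u^2\bigr) - 2(\nabla u)^T Dv\, \nabla u \Bigr]\, \mathrm{d}x.
\]
Applying the divergence theorem to the first term produces the boundary contribution $-\int_{\partial\Omega}(|\nabla u|^2 - \lambda_1^\alpha(0)u^2)\langle v,\nu\rangle\,\mathrm{d}S$ and an interior term involving $v\cdot\nabla|\nabla u|^2$ and $2\lambda_1^\alpha(0) u\langle v,\nabla u\rangle$. Integrating the second term by parts in each $x_j$ coordinate and replacing $\Delta u$ by $-\lambda_1^\alpha(0) u$ generates exactly the same two interior contributions with opposite sign (hence they cancel), plus the boundary term $-2\int_{\partial\Omega}\partial_\nu u\,\langle v,\nabla u\rangle\,\mathrm{d}S$. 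Condition \eqref{enum:1}, the compact support of $\Phi(t,\cdot)-\operatorname{id}$, ensures no contribution from infinity, and the fact that the outward normal of $\Omega^\text{ext}$ on $\partial\Omega$ is $-\nu$ is what produces the overall minus sign in front of each boundary term.

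It remains to combine the surviving boundary terms. The Hadamard condition gives $\langle v,\nabla u\rangle = \langle v,\nu\rangle\partial_\nu u$ on $\partial\Omega$, and the Robin condition $\partial_\nu u = \alpha u$ converts $2\partial_\nu u\,\langle v,\nabla u\rangle$ into $2\alpha^2 u^2\langle v,\nu\rangle$; adding the boundary contribution $\alpha \int_{\partial\Omega} u^2 \dot{m}(0,x)\,\mathrm{d}S = -\alpha(n-1)\int_{\partial\Omega} H(\Omega^\text{ext}) u^2\langle v,\nu\rangle\,\mathrm{d}S$ then assembles exactly \eqref{eq:firstvarlambdat0}. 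I expect the main obstacle to be bookkeeping: tracking the opposite orientation of the outward normal on $\Omega^\text{ext}$ versus $\Omega$, tracking the sign $H(\Omega^\text{ext}) = -H(\Omega)$ in the derivative of the surface element, and verifying the cancellation of all interior terms in the integration by parts.
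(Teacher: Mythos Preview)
Your proposal is correct and follows essentially the same route as the paper: both start from Theorem~\ref{satz:dotlambdaext}, plug in $\dot J(0)=\operatorname{div}(v)$, $\dot A(0)=\operatorname{div}(v)I-Dv-(Dv)^T$, $\dot m(0)=-(n-1)H(\Omega^{\mathrm{ext}})\langle v,\nu\rangle$, and reduce the volume integral to a boundary integral via the Pohozaev-type divergence identities (the paper phrases this as adding and subtracting $2\int\Delta u\,\langle\nabla u,v\rangle$ and recognizing $\operatorname{div}(|\nabla u|^2 v)$ and $\operatorname{div}(\langle\nabla u,v\rangle\nabla u)$, which is exactly your integration by parts). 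One bookkeeping slip: the boundary term coming from integrating $-2(\nabla u)^T Dv\,\nabla u$ by parts is $+2\int_{\partial\Omega}\partial_\nu u\,\langle v,\nabla u\rangle\,\mathrm dS$, not $-2$, which is in fact what you use in the last paragraph when you convert it to $+2\alpha^2 u^2\langle v,\nu\rangle$.
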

\vspace{1ex}

If we consider $\Omega = B_R \subseteq \mathbb{R}^n$ for $n \geq 2$, the first eigenfunction is given in \eqref{eq:uExtBall}. Since $u$ is radial and $H$ is constant on $\partial B_R$, it holds
\begin{align*}
| \nabla u|^2 - 2 \alpha^2 u^2 +\alpha u^2 (n-1) H(B_R^\text{ext})  - \lambda_1^\alpha(0) u^2 = const \, \text{ on } \, \partial B_R.
\end{align*} 
In view of $-\partial_\nu u(x) + \alpha u=0$ on $\partial B_R$, we obtain $|\nabla u|^2 = (\partial_\nu u)^2 = \alpha^2 u^2$ and it holds $ H(B_R^\text{ext}) = - \frac{1}{R}$.  Hence, by defining $K:=\alpha^2 + \alpha \frac{n-1}{R} + \lambda_1^\alpha(0)$ and writing $u(R)$ as $u(x)$ for $x \in \partial B_R$, \eqref{eq:firstvarlambdat0} simplifies to
\begin{align}\label{eq:dotlambda1radial}
\dot{\lambda}_1^\alpha(0) =  u^2(R) \left( \alpha^2 + \alpha \frac{n-1}{R} + \lambda_1^\alpha(0) \right) \int_{\partial \Omega} \langle v, \nu \rangle \,  \mathrm{d}S =  u^2(R) K \int_{\partial \Omega} \langle v, \nu \rangle \,  \mathrm{d}S.
\end{align}
Given $\dot{V}(0) = \int_{\partial \Omega} \langle v, \nu \rangle \, \mathrm{d} S$, we obtain $\dot{\lambda}_1^\alpha(0) =  u^2(R) K \dot{V}(0)$. Thus, $\dot{\lambda}_1^\alpha(0)$ vanishes for all measure preserving perturbations of the ball. To determine whether the ball is a local minimizer, maximizer, or neither, we are interested in the sign of the second variation. In addition to conditions \eqref{enum:1} - \eqref{enum:perturexta}, we now also assume
\vspace{1ex}
\begin{enumerate}[(a)]
  \setcounter{enumi}{4}
\item $\Phi$ is measure preserving of second order, i.e. $\dot{V}(0) = \ddot{V}(0) = 0$,
\item $\Phi$ satisfies the barycenter condition, i.e. $\int_{\partial \Omega} \langle v, \nu \rangle x \, \mathrm{d}S = 0$.\label{enum:f}
\end{enumerate}
\vspace{1ex}
A useful property of the shape derivative  arises from differentiating \eqref{eq:problemtrafo} with respect to $t$, as elaborated in \cite[(6.2.12), Lemma 6.1]{Wagner1}. This leads to the following lemma.
\vspace{1ex}
\begin{lemma}\label{lemma:u'Kugel}
Let $u$ denote the first eigenfunction of \eqref{eq:problemgest} for $\Omega = B_R$. Then, $u'$ satisfies
\begin{align*}
\begin{cases}
\Delta u' + \lambda_1^\alpha(0) u' = 0   &\text{ in } B_R^{ext} ,\\
-\partial_\nu u' + \alpha u' = -K u \langle v, \nu \rangle &\text{ on } \partial B_R.
\end{cases}
\end{align*}
\end{lemma}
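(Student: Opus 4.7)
I would differentiate the eigenvalue problem for $u(y,t)$ with respect to the deformation parameter at $t=0$ and convert to the shape derivative via \eqref{eq:defshape}, in analogy with the bounded-domain treatment in \cite[Lemma 6.1]{Wagner1}.

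For the interior equation, fix any $x_0 \in B_R^{\text{ext}}$. By the compact-support condition (a), a neighborhood of $x_0$ lies in every $\Omega_t^{\text{ext}}$ for sufficiently small $|t|$, so $\Delta_y u(y,t) + \lambda_1^\alpha(t) u(y,t) = 0$ can be differentiated in $t$ at $(x_0,0)$ to yield
\[
\Delta u'(x_0) + \dot\lambda_1^\alpha(0)\,u(x_0) + \lambda_1^\alpha(0)\,u'(x_0) = 0.
\]
By \eqref{eq:dotlambda1radial} together with the measure-preserving hypothesis $\dot V(0)=0$, the coefficient $\dot\lambda_1^\alpha(0)$ vanishes, giving the desired interior PDE for $u'$.

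For the boundary condition I would differentiate the pulled-back Robin condition $-\partial_{\nu_{A(t,x)}}\widetilde u + \alpha m(t,x)\widetilde u = 0$ from \eqref{eq:problemtrafo} at $t=0$, using $A(0,x)=I$, $m(0,x)=1$, the identity $\dot A(0)=\operatorname{div}(v)\,I - Dv - (Dv)^{T}$ obtained from the chain-rule description of $A$, and the expression for $\dot m(0,x)$ derivable from \eqref{eq:surfaceelement}. Substituting $\dot{\widetilde u}(x,0)=\langle\nabla u,v\rangle+u'(x)$ via \eqref{eq:defshape} recasts the differentiated boundary equation as
\[
-\partial_\nu u'+\alpha u' = \partial_\nu\langle\nabla u,v\rangle - \alpha\langle\nabla u,v\rangle + \sum_{i,j}\nu_i\dot A_{i,j}(0,x)\,\partial_{x_j}u - \alpha\dot m(0,x)\,u \quad\text{on }\partial B_R.
\]
The geometry of the ball now simplifies matters considerably: $u$ is radial with $\nabla u = \alpha u\,\nu$ on $\partial B_R$ (from $\partial_\nu u=\alpha u$) and vanishing tangential gradient; the Hadamard condition gives $v=\phi\,\nu$ with $\phi := \langle v,\nu\rangle$, hence $\langle\nabla u,v\rangle = \alpha u\phi$; the radial form of the PDE $u_{rr}+\tfrac{n-1}{r}u_r + \lambda_1^\alpha(0) u = 0$ forces $u_{rr}(R)=-\tfrac{n-1}{R}\alpha u - \lambda_1^\alpha(0)u$; and $H(B_R^{\text{ext}})=-1/R$ is constant.

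The main obstacle is checking that all extension-dependent tangential-derivative contributions of $\phi$ cancel and that the radial pieces reorganize into the single coefficient $K=\alpha^{2}+\alpha\tfrac{n-1}{R}+\lambda_1^\alpha(0)$. The $\alpha^{2}u\phi$ contribution originates from $-\alpha\langle\nabla u,v\rangle$ together with the $\dot A$-term, both processed through $\nabla u\parallel\nu$; the $\alpha\tfrac{n-1}{R}u\phi$ contribution comes from $-\alpha\dot m(0,x)u$, using that the first variation of the surface element on the sphere reduces to the tangential divergence $\operatorname{div}_\tau v = (n-1)R^{-1}\phi$; and the $\lambda_1^\alpha(0)u\phi$ contribution emerges from $\partial_\nu\langle\nabla u,v\rangle$ after invoking the radial expression for $u_{rr}$. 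Radial symmetry of $u$ is essential because it forces every $\nabla u$-term to align with $\nu$, so no tangential derivative of $\phi$ ever survives. Assembling these pieces produces the stated boundary condition $-\partial_\nu u'+\alpha u' = -K u\,\langle v,\nu\rangle$ on $\partial B_R$.
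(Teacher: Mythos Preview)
Your approach is correct and matches the paper's, which simply defers to \cite[(6.2.12), Lemma~6.1]{Wagner1}: differentiate \eqref{eq:problemtrafo} at $t=0$, substitute $\dot{\widetilde u}(0)=\langle\nabla u,v\rangle+u'$, and exploit the radial symmetry of $u$ together with $\dot\lambda_1^\alpha(0)=0$.

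One small conceptual slip: the ``extension-dependent'' contributions you worry about are not the \emph{tangential} derivatives of $\phi=\langle v,\nu\rangle$ (those are intrinsic to $\partial B_R$) but the \emph{normal} derivative $\partial_\nu\phi$, which does depend on how $v$ is extended off the boundary. The required cancellation is between the $+u_r\,\partial_\nu\phi$ coming from $\partial_\nu\langle\nabla u,v\rangle$ and the $-u_r\,\partial_\nu\phi$ hidden in the $\dot A$-term via $\langle\nu,(Dv)\nu\rangle$; once those cancel, the remaining pieces $u_{rr}\phi$, $-\alpha u_r\phi$, $u_r\tfrac{n-1}{R}\phi$, and $-\alpha\dot m(0)u$ assemble into $-K u\phi$ exactly as you outline. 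Your attribution of the three summands of $K$ to specific terms is slightly imprecise (the $\dot A$-term also produces a $\tfrac{n-1}{R}$ piece that cancels against one from $u_{rr}$), but the bookkeeping works out.
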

We can use Lemma \ref{lemma:u'Kugel} as in \cite[Corollary 3.6]{BundPhD}, to simplify the second variation, detailed in Section \ref{secA2}. The result is
\begin{align}\label{eq:ddotlambda0}
\ddot{\lambda}_1^\alpha(0) =2 u^2(R) \alpha K \int_{ \partial B_R}    \langle v, \nu \rangle^2  \,  \mathrm{d}S +\alpha u^2(R) \ddot{\mathcal{S}}(0)  -2\mathcal{Q}(u'),
\end{align}
where $\mathcal{S}(t) := | \partial \Omega_t|$ and
\begin{align}\label{eq:DefQ}
\mathcal{Q}(u') := \int_{B_R^{\text{ext}}} | \nabla u' |^2 - \lambda_1^\alpha(0)  \left( u' \right)^2 \, \mathrm{d}x + \alpha \int_{\partial B_R} \left( u' \right) ^2\,  \mathrm{d}S.
\end{align}
The main challenge at this stage is determining the sign of $\ddot{\lambda}_1^\alpha(0)$.

\subsection{The Sign of the Second Variation for the Ball}\label{signofsec}

Due to the technical complexity of calculating $\ddot{\lambda}_1^\alpha(0)$, we introduce the following notation, where $y = f_n(z)$ and $R^2 K = a_n(z)$ as per \eqref{eq:uExtBall}.

\vspace{1ex}
\begin{definition}\label{def:fn}
For $\alpha < \alpha^*$ and $\lambda_1^\alpha < 0$, we set $y:= - \alpha R$ and $z:= \sqrt{- \lambda_1^\alpha(0)} R$. Furthermore, we define the functions
\begin{align*}
&f_n: (0, \infty) \to \left( n-2, \infty \right),   &&x \mapsto x \frac{K_{\frac{n}{2}}(x)}{K_{\frac{n-2}{2}}(x)},\\
&a_n: (0, \infty) \to \R,  &&x \mapsto  f_n(x)^2 - (n-1) f_n(x) - x^2.
\end{align*}
\end{definition}

\subsubsection{Inequalities Involving the Modified Bessel Functions}\label{sec:ineqbess}

We start by establishing some useful inequalities involving $f_n$ and $a_n$. While \eqref{eq:segura} provides a lower bound for the ratio of Bessel functions, the subsequent lemma introduces an alternative lower bound. Specifically, for $\frac{(n-2)^2-1}{4} < z$, Lemma \ref{lemma:fmonfal} enhances \eqref{eq:segura}, and as $z$ tends to infinity, it becomes sharp. 
\vspace{1ex}
\begin{lemma}\label{lemma:fmonfal}
For all $z \in (0,\infty)$ and $n \geq 3$, it holds that
\begin{align*}
z \frac{K_{\frac{n}{2}}(z)}{K_{\frac{n-2}{2}}(z)} = f_n(z)  \geq z + \frac{n-1}{2}.
\end{align*}
\end{lemma}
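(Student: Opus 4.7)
The plan is to reduce the stated inequality to a minimum-principle argument for a Riccati-type ODE satisfied by $f_n$.

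Let $m := (n-2)/2$ and write $r(z) := K_{m+1}(z)/K_m(z)$, so that $f_n(z) = z\, r(z)$. Starting from the standard identities \eqref{eq:Bessediff} and \eqref{eq:RecK} (equivalently, $K_m'(z) = -K_{m+1}(z) + (m/z) K_m(z)$ and the analogous relation for $K_{m+1}'$), I would differentiate $r$ directly to obtain the Riccati equation $r'(z) = r(z)^2 - ((2m+1)/z)\, r(z) - 1$, which upon substituting $r = f_n/z$ becomes
\begin{equation*}
z\, f_n'(z) \;=\; f_n(z)^2 - 2m\, f_n(z) - z^2.
\end{equation*}
This is essentially the same calculation used after \eqref{eq:mondecf} to establish monotonicity of $f_m$, so no genuinely new work is needed here.

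Next, introduce the deficit $h(z) := f_n(z) - z - (n-1)/2$ and substitute $f_n = h + z + m + 1/2$ into the displayed ODE. After routine cancellations the result is
\begin{equation*}
z\, h'(z) \;=\; h(z)^2 + (2z+1)\, h(z) - c, \quad \text{where } c := \tfrac{(n-3)(n-1)}{4} \geq 0.
\end{equation*}
The boundary values of $h$ follow from the asymptotics \eqref{eq:BesselKasymp0} and \eqref{eq:BesselKasympinf} of $K_m$: one obtains $f_n(0^+) = n-2$, hence $h(0^+) = (n-3)/2 \geq 0$, while at infinity $h(z) \to 0$, with the leading correction being nonnegative since $4m^2 - 1 \geq 0$ for $n \geq 3$.

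The argument concludes by a second-derivative test. Suppose for contradiction that $h(z^*) < 0$ for some $z^* > 0$. Since $h$ is analytic on $(0,\infty)$ and nonnegative at both endpoints, it attains its global infimum at some interior point $z_0$ with $h(z_0) < 0$, $h'(z_0) = 0$, and $h''(z_0) \geq 0$. Differentiating the Riccati-type ODE once and evaluating at $z_0$ gives
\begin{equation*}
z_0\, h''(z_0) \;=\; 2\, h(z_0) \;<\; 0,
\end{equation*}
contradicting $h''(z_0) \geq 0$. The only real obstacle is the algebraic verification of the ODE for $h$ with the correct sign of $c$; once that is in place, the minimum-principle step is a one-line contradiction, and the equality case $f_3(z) = z+1$ (where $c = 0$ and $h \equiv 0$) is consistent with the argument.
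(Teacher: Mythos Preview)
Your argument is correct and takes a genuinely different route from the paper. The paper proceeds by induction with step $n \mapsto n+4$ via the recurrence $f_{n+4}(z) = (n+2) + z^2/(z^2/f_n(z)+n)$, after verifying the base cases $n=3,4,5,6$ by hand (explicit half-integer formulas for $n=3,5$ and the bounds of \cite{Bessel2} for $n=4,6$). Your approach instead derives the Riccati-type identity $z f_n' = f_n^2 - (n-2)f_n - z^2$, rewrites it as $z h' = h^2 + (2z+1)h - (n-3)(n-1)/4$ for the deficit $h = f_n - z - (n-1)/2$, and excludes interior negative minima by differentiating once more and using $z_0 h''(z_0) = 2h(z_0)$ at a critical point. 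The main advantage of your method is uniformity: all $n \geq 3$ are handled at once, no external Bessel-quotient bounds are needed, and the equality case $n=3$ (where $c=0$ and $h\equiv 0$) drops out transparently. The paper's induction, by contrast, is more elementary in that it avoids any differential analysis, but it requires separate verification of four base cases and imports inequalities from the literature. One small remark: your comment about the sign of the leading correction at infinity is correct (indeed $h(z)\sim (n-3)(n-1)/(8z)\geq 0$) but not needed; knowing $h(0^+)=(n-3)/2\geq 0$ and $h(z)\to 0$ already forces any negative value of $h$ to produce a strictly interior global minimum, which is all the second-derivative step requires.
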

\begin{proof}
We prove the statement by induction. To this end, we consider $n=3, \ldots,6$ separately. The Bessel functions of half integer order can be expressed by elementary functions (see e.g. \cite[Chapter 9]{abramowitz1968handbook}), yielding
\begin{align*}
K_{\frac{1}{2}}(z) &= \sqrt{\frac{\pi}{2}} \frac{e^{-z}}{\sqrt{z}}, \, \, K_{\frac{3}{2}}(z) =  \sqrt{\frac{\pi}{2}} \frac{e^{-z}}{\sqrt{z}} \frac{z+1}{z}, \, \, K_{\frac{5}{2}}(z) = \sqrt{\frac{\pi}{2}} \frac{e^{-z}}{\sqrt{z}} \frac{z^2+3z+3}{z^2}.
\end{align*}
Hence, Lemma \ref{lemma:fmonfal} directly follows for $n \in \{3,5 \}$. From \cite[Remark 3.11]{Bessel2}, we obtain $\frac{2(z+1)^2}{z(2z+1)} < \frac{K_2(z)}{K_1(z)} < \frac{4z^2+9z+6}{z(4z+3)} $. Therefore,
\begin{align*}
f_4(z) &= z \frac{K_2(z)}{K_1(z)} > \frac{2(z+1)^2}{2z+1} = z + \frac{3}{2} + \frac{1}{2(2z+1)} >  z + \frac{3}{2}.
\end{align*}
By using $\frac{K_{m+1}(x)}{K_m(x)} = \frac{K_{m-1}(x)}{K_m(x)} + \frac{2m}{x}$,
we obtain
\begin{align*}
f_6(z) &= 4 + z \frac{K_1(z)}{K_2(z)} > 4+ z \frac{z(4z+3)}{4z^2+9z+6} = z + \frac{5}{2} + \frac{15z+18}{8z^2+18z+12} >z + \frac{5}{2}.
\end{align*}
Thus, Lemma \ref{lemma:fmonfal} is shown for $3 \leq n \leq 6$. For $n \geq 7$, the recurrence relation \eqref{eq:RecK} yields
\begin{align}\label{eq:nochmal}
f_n(z) = z \frac{K_{\frac{n}{2}}(z)}{K_{\frac{n-2}{2}}(z)} = z  \frac{K_{\frac{n-4}{2}}(z)}{K_{\frac{n-2}{2}}(z)} +  n-2  =  \frac{z^2}{f_{n-2}(z)} + n-2.
\end{align}
Assuming $f_n(z) \geq z + \frac{n-1}{2}$, we obtain
\begin{align*}
& \,f_{n+4}(z) - z - \frac{n+4-1}{2} = \frac{z^2}{f_{n+2}(z)} +n +2- z - \frac{n+3}{2} \\
=&\,\frac{z^2}{\frac{z^2}{f_n(z)} + n} +n+2 - z - \frac{n+3}{2}   \\
=& \, \frac{1}{z^2 + n  f_n(z)} \left( f_n(z) \left[ z^2  -n z + n\frac{n+1}{2}\right]  -z^3  + z^2 \frac{n+1}{2}   \right) \\
>& \, \frac{1}{z^2 + n  f_n(z)} \left( \left[  z + \frac{n-1}{2} \right] \left[ z^2  -n z + n\frac{n+1}{2}\right]  -z^3  + z^2 \frac{n+1}{2}   \right) \\
=& \, \frac{1}{z^2 + n  f_n(z)} n \left[ z + \frac{n-1}{2}\frac{n+1}{2}   \right] > 0.
\end{align*}
Thus, the statement is shown by induction.
\end{proof}

 Lemma \ref{lemma:fmonfal} does not hold for $n=2$, since, by \eqref{eq:asymptoticsalpha}, $\lim_{z \to 0} f_2(z) = 0$. The following lemma demonstrates the negativity of $K$. 
\vspace{1ex}
\begin{lemma}\label{bem:a_n}
For all $z \in (0, \infty), n \geq 2$ it holds that $a_n(z) < 0$.
\end{lemma}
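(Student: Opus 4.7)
My plan is to recognize that $a_n(z) = z^2 R'(z)$, where $R(z) := f_n(z)/z = K_{n/2}(z)/K_{(n-2)/2}(z)$, and then to establish the strict monotonicity of $R$ by a Riccati/phase-line argument.

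First, using the Bessel recurrences \eqref{eq:Bessediff} and \eqref{eq:RecK} exactly as in the computation preceding \eqref{eq:mondecf}, I would obtain $f_n'(z) = (f_n(z)^2 - (n-2) f_n(z) - z^2)/z$. A one-line algebraic manipulation then gives
\begin{align*}
a_n(z) = f_n(z)^2 - (n-1) f_n(z) - z^2 = z f_n'(z) - f_n(z) = z^2 R'(z),
\end{align*}
and, dividing the same identity by $z^2$, that $R$ satisfies the Riccati equation
\begin{align*}
R'(z) = R(z)^2 - \tfrac{n-1}{z} R(z) - 1.
\end{align*}
The right-hand side of this equation, regarded as a quadratic in $R$, has the unique positive root $R_+(z) := (n-1 + \sqrt{(n-1)^2 + 4z^2})/(2z)$. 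Since $R(z) > 0$, the inequality $R'(z) < 0$ is equivalent to $R(z) < R_+(z)$, and the lemma reduces to establishing this bound for all $z > 0$ and all $n \geq 2$.

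Next, I would record four facts: (i) $R_+$ is strictly decreasing on $(0,\infty)$, by direct differentiation; (ii) $R_+(z) > 1$ for all $z > 0$, since $\sqrt{(n-1)^2 + 4z^2} > 2z$; (iii) by the asymptotic \eqref{eq:BesselKasympinf}, $R(z) \to 1$ as $z \to \infty$; (iv) by \eqref{eq:BesselKasymp0} one has $R(z) \sim (n-2)/z < (n-1)/z \sim R_+(z)$ as $z \to 0^+$ when $n \geq 3$, while for $n = 2$ the logarithmic expansion of $K_0$ gives $R(z) \sim 1/(z|\log z|) \ll 1/z \sim R_+(z)$. So $R < R_+$ for all sufficiently small $z > 0$. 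I would then argue by contradiction: if $z_0 := \inf\{z > 0 : R(z) \geq R_+(z)\}$ is finite, continuity and (iv) force $R(z_0) = R_+(z_0)$. The Riccati equation gives $R'(z_0) = 0$ while (i) yields $R_+'(z_0) < 0$, so $(R - R_+)'(z_0) > 0$ and $R > R_+$ on some $(z_0, z_0 + \varepsilon)$. But on any interval where $R > R_+$, the Riccati equation forces $R' > 0$, so $R$ is strictly increasing on $[z_0, \infty)$ and stays above $R_+(z_0) > 1$, contradicting (iii).

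The main obstacle will be this final phase-line step. The more tempting strategy of plugging the lower bound of Lemma~\ref{lemma:fmonfal} into the recurrence $f_n = (n-2) + z^2/f_{n-2}$ yields only $f_n(z) \leq z + (n-1)/2 + ((n-3)/2)^2/(z + (n-3)/2)$, and a short polynomial check shows this upper bound is sharp enough to give $a_n < 0$ only for moderate $n$ and moderate $z$; it fails once $n$ is large and $z$ is large. The Riccati approach sidesteps this obstruction by exploiting the global dynamics of $R$ rather than a single static estimate, and simultaneously covers both the $n = 2$ and $n \geq 3$ cases of the lemma.
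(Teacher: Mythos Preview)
Your proof is correct and takes a genuinely different route from the paper's. The paper observes that $a_n(z)<0$ is \emph{equivalent} to the strict upper Segura bound $f_n(z)<\tfrac{n-1}{2}+\sqrt{\tfrac{(n-1)^2}{4}+z^2}$ and simply invokes \eqref{eq:segura}: assuming $a_n(z_0)=0$, one has $z_0^2=y_0^2-(n-1)y_0$ with $y_0=f_n(z_0)$, and substituting into the strict Segura upper bound yields $y_0<y_0$, a contradiction; the sign of $a_n$ is then fixed by checking the limit $z\to 0$. Your argument instead \emph{reproves} that Segura-type bound from scratch: the identity $a_n=z^2R'$ with $R=f_n/z$ and the Riccati equation $R'=R^2-\tfrac{n-1}{z}R-1$ recast $a_n<0$ as $R<R_+$, and your barrier/phase-line step (using only the standard asymptotics \eqref{eq:BesselKasymp0}, \eqref{eq:BesselKasympinf}) shows $R$ can never touch $R_+$, since at a first touching point $R'=0>R_+'$ would force $R$ to rise above the decreasing barrier $R_+$ and stay there, contradicting $R\to 1<R_+$.

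What this buys: your proof is self-contained and does not rely on the external reference \cite{Segura}; it also handles $n=2$ and $n\ge 3$ uniformly, whereas the paper treats the $n=2$ limit separately. The paper's proof is shorter precisely because it outsources the main estimate. It is worth noting that your Riccati argument is in the same spirit as Segura's own proof of \eqref{eq:segura}, so you are essentially rediscovering the mechanism behind the cited inequality rather than using a different inequality.
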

\begin{proof}
We assume there exists a $z_0 > 0$ with $a_n(z_0)=0$, and denote $f_n(z_0) = y_0$. Then, it holds $y_0^2 - (n-1) y_0 - z_0^2 = 0$, which is equivalent to $z_0 = \sqrt{y_0^2-(n-1)y_0}$. This implies
\begin{align*}
f_n \left( \sqrt{y_0^2-(n-1)y_0} \right) = y_0.
\end{align*}
Using \eqref{eq:segura}, we obtain
\begin{align*}
f_n \left( \sqrt{y_0^2-(n-1)y_0}\right) - y_0 &< \frac{n-1}{2} + \sqrt{\frac{(n-1)^2}{4}+y_0^2-(n-1)y_0}-y_0 \\
&= \frac{n-1}{2} + \sqrt{\left[y_0-\frac{n-1}{2} \right]^2}-y_0 = 0.
\end{align*}
Here, we used \eqref{eq:asymptoticsalpha}, i.e. $y_0  \geq \lim_{z \to 0} f_n(z) = n-2$. Hence, $a_n(z)$ has no zeros in $(0, \infty)$. 

For $n \geq 3$, we have $\lim_{z \to 0} a_n(z) = -(n-2) < 0$, and $a_n(z)$ is continuous. Thus, $a_n(z) < 0$ for all $n \geq 3$ and $z \in (0,\infty)$.

For $n=2$, the asymptotic behavior of the Bessel functions for small arguments, as given in \eqref{eq:BesselKasymp0}, yields
\begin{align*}
a_2(z) &= \left( z \frac{K_1(z)}{K_0(z)} \right)^2 - z \frac{K_1(z)}{K_0(z)} - z^2  \approx  \frac{1}{-\ln(\frac{z}{2})-\gamma}\left(  \frac{1}{-\ln(\frac{z}{2})-\gamma} -1\right) - z^2,
\end{align*}
where $\gamma$ is Euler's constant. Hence, $a_2(z)$ is also negative for small $z$, and we conclude $a_2(z) < 0$ for all $z > 0$.
\end{proof}
While Lemma \ref{bem:a_n} provides an upper bound for $a_n$, we also establish a lower bound in the next lemma.
\vspace{1ex}
\begin{lemma}\label{lemma:an}
For all $z>0$, $n \geq 2$, it holds that
\begin{align}\label{eq:lemma:an}
a_n(z) \geq \begin{cases}
-(n-2)  & \, \text{ for } n \geq 3, \\
-\frac{1}{2} & \,  \text{ for } n=2.
\end{cases}
\end{align}
\end{lemma}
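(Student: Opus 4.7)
The plan is to split into $n \geq 3$ and $n = 2$. The former is handled cleanly by the recurrence \eqref{eq:nochmal} together with Lemma \ref{bem:a_n}; the latter requires a differential-inequality argument since the recurrence breaks down at $n = 2$, where $f_0$ has logarithmic behavior near the origin.

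For $n \geq 3$, note that $f_n(z) > n-2 \geq 1$, so
\[
a_n(z) + (n-2) = f_n^2 - (n-1)f_n + (n-2) - z^2 = (f_n - 1)(f_n - (n-2)) - z^2,
\]
and both factors on the right are positive. Using $f_n - (n-2) = z^2/f_{n-2}$ from \eqref{eq:nochmal}, the desired inequality $a_n(z) \geq -(n-2)$ is equivalent to $f_n(z) - 1 \geq f_{n-2}(z)$. Applying the recurrence once more to replace $f_n$ on the left yields the equivalent formulation $f_{n-2}(z)^2 - (n-3) f_{n-2}(z) - z^2 \leq 0$, i.e. $a_{n-2}(z) \leq 0$. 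For $n \geq 4$ this is exactly Lemma \ref{bem:a_n} applied to $n-2 \geq 2$. The base case $n = 3$ is immediate: $K_{-1/2} = K_{1/2}$ gives $f_1(z) = z$, so $a_1(z) \equiv 0$, whence $a_3(z) \equiv -1$ and the bound is sharp.

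For $n = 2$, completing the square reformulates $a_2(z) \geq -1/2$ as $(f_2(z) - 1/2)^2 \geq z^2 - 1/4$, which is trivial for $z \leq 1/2$. For $z > 1/2$ it suffices to show that $\psi(z) := f_2(z) - 1/2 - \sqrt{z^2 - 1/4} \geq 0$. The derivative identity $f_2'(z) = (f_2(z)^2 - z^2)/z$, obtained directly from $K_0' = -K_1$ and $K_1' = -K_0 - K_1/z$, implies that at any hypothetical zero $z_0 > 1/2$ of $\psi$ one has $f_2(z_0)^2 - z_0^2 = \sqrt{z_0^2 - 1/4}$, so
\[
\psi'(z_0) = \frac{\sqrt{z_0^2 - 1/4}}{z_0} - \frac{z_0}{\sqrt{z_0^2 - 1/4}} = -\frac{1}{4 z_0 \sqrt{z_0^2 - 1/4}} < 0.
\]
One verifies $\psi(1/2^+) > 0$ from \eqref{eq:segura} at $m = 0$ (which gives $f_2(z) > z$), and $\psi(z) > 0$ for all large $z$ by the asymptotic expansion of $K_1/K_0$, yielding $\psi(z) = 1/(8 z^2) + O(1/z^{3})$. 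Were $\psi$ negative somewhere, the right endpoint of a maximal interval of negativity would be a zero of $\psi$ with $\psi' \geq 0$, contradicting the strict negativity above. Hence $\psi \geq 0$ throughout.

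The main obstacle is the $n = 2$ case: the recurrence-driven reduction that handles $n \geq 3$ so cleanly has no analogue at $n = 2$, and the lower bound $-1/2$ is saturated only asymptotically as $z \to \infty$. A Riccati-type identity for $f_2$ combined with asymptotic control at infinity therefore appears unavoidable.
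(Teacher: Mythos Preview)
Your proof is correct, and for $n\geq 3$ it is a genuinely cleaner route than the paper's. The paper establishes the four base cases $n=3,4,5,6$ by explicit half-integer Bessel formulas and the external bound $\frac{2(z+1)^2}{z(2z+1)}<\frac{K_2}{K_1}<\frac{4z^2+9z+6}{z(4z+3)}$, and then runs an induction with step $4$ via the identity
\[
a_{n+4}(z)=-(n+2)+\frac{z^2}{(nf_n+z^2)^2}\bigl(z^2 a_n(z)+nf_n(z)^2\bigr).
\]
Your factorisation $a_n+(n-2)=(f_n-1)(f_n-(n-2))-z^2$ together with a single application of \eqref{eq:nochmal} collapses the whole induction to the identity
\[
a_n(z)+(n-2)=-\frac{z^2\,a_{n-2}(z)}{f_{n-2}(z)^2},
\]
so that $a_n\geq -(n-2)$ is \emph{equivalent} to $a_{n-2}\leq 0$, which is precisely Lemma~\ref{bem:a_n} for $n\geq 4$ and the direct computation $a_1\equiv 0$ for $n=3$. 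This is shorter, avoids external Bessel estimates, and makes transparent why the constant $-(n-2)$ is exactly right (and sharp at $n=3$).

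For $n=2$ the two arguments reach the same intermediate target $f_2(z)>\tfrac12+\sqrt{z^2-\tfrac14}$, but by different means: the paper quotes the ready-made bound $f_2(z)>z\bigl(1+\tfrac{1}{2z+1/2}\bigr)$ from \cite{Bessel2}, whereas you give a self-contained Riccati argument using $f_2'=(f_2^2-z^2)/z$ and the asymptotics \eqref{eq:BesselKasympinf}. One small remark: the paper records \eqref{eq:segura} as strict only for $m\neq 0$, so to justify $\psi(1/2^+)>0$ it is safer to invoke the elementary fact $K_1(z)>K_0(z)$ for $z>0$ (monotonicity of $K_\nu$ in $\nu$) rather than the Segura lower bound at $m=0$; with that adjustment your argument is watertight.
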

\begin{proof}
To prove the claim by induction, we establish the statement separately for $n = 3, \ldots, 6$. For odd $n$, we have, analogously to the proof of Lemma \ref{lemma:fmonfal} ,
\begin{align*}
a_3 (z) &= \left( z \frac{K_{\frac{3}{2}}(z)}{K_{\frac{1}{2}}(z)} \right)^2 - 2z \frac{K_{\frac{3}{2}}(z)}{K_{\frac{1}{2}}(z)}-z^2 = \left( z \frac{z+1}{z} \right)^2 - 2z \frac{z+1}{z}  -z^2 = -1,\\
a_5(z) &= \left( z \frac{K_{\frac{5}{2}}(z)}{K_{\frac{3}{2}}(z)} \right)^2 - 4z \frac{K_{\frac{5}{3}}(z)}{K_{\frac{1}{2}}(z)}-z^2 =\frac{-2z^2-6z-3}{(z+1)^2} > -3.
\end{align*}
For even $n$, we utilize \eqref{eq:nochmal} to obtain
\begin{align*}
a_4(z) = \left( \frac{z^2}{f_{2}(z)} + 2 \right)^2-3\left( \frac{z^2}{f_{2}(z)} + 2 \right)-z^2 = -2 + z^2 \left( \frac{z^2}{f_2^2(z)  }+\frac{1}{f_2(z)} -1 \right).
\end{align*}
Now, \eqref{eq:segura} with $m=0$ yields $ f_2(z) < \frac{1}{2} + \sqrt{\frac{1}{4}+z^2}$. Therefore, 
\begin{align*}
\frac{z^2}{f_2^2(z)  }+\frac{1}{f_2(z)} -1 >&\frac{z^2}{\left( \frac{1}{2} + \sqrt{\frac{1}{4}+z^2} \right)^2}+\frac{1}{\frac{1}{2} + \sqrt{\frac{1}{4}+z^2}} -1 = 0.
\end{align*}
Thus, we can conclude $a_4(z) > -2$. Proceeding in the same manner yields
\begin{align*}
a_6(z) &= \left( \frac{z^2}{f_4(z)} +4 \right)^2 -5\left( \frac{z^2}{f_4(z)} +4 \right)-z^2 =-4 + z^2 \left( \frac{z^2}{f_4^2(z)} + \frac{3}{f_4(z)} -1 \right).
\end{align*}
As before, \eqref{eq:segura} yields $f_4(z) < \frac{3}{2} + \sqrt{\frac{9}{4}+z^2}$ which implies $a_6(z) >-4$.

The same procedure applies for any $n \geq 3$, i.e.
\begin{align*}
a_{n+4}(z) =& \left( n+2 + \frac{z^2 f_n(z)}{z^2 + n f_n(z)} \right)^2 - (n+4-1) \left( n+2 + \frac{z^2 f_n(z)}{z^2 + n f_n(z)} \right) -z^2\\
=&-(n+2) + \frac{z^2}{(n f_n(z)+z^2)^2} \left( f_n^2(z) ( n +z^2) - (n-1)z^2 f_n(z) -z^4  \right)  \\
=& -(n+2) + \frac{z^2}{(n f_n(z)+z^2)^2} \left( z^2 a_n(z) + n f_n^2(z)  \right). 
\end{align*}
Assuming $a_n(z) > -(n-2)$, the last bracket is positive because of
\begin{align*}
 z^2 a_n(z) + n f_n^2(z) > -z^2 (n-2) + n z^2 = 2z^2 > 0,
\end{align*}
where we used $f_n(z) > z$ (see \eqref{eq:segura}). Thus, \eqref{eq:lemma:an} follows inductively for all $n \geq 3$.

We now consider the case $n=2$. Assuming the existence of a positive $z$ such that $f_2^2(z) - f_2(z) -z^2 + \frac{1}{2}=0$, we would have
\begin{align*}
f_2(z) = \frac{1}{2} \pm \sqrt{z^2 - \frac{1}{4}}.
\end{align*}
This expression is only well-defined for $z \geq \frac{{1}}{2}$. By \cite[Theorem 3.10]{Bessel2} it holds 
\begin{align*}
f_2(x) > x \left( 1+ \frac{1}{2x+\frac{1}{2}} \right) > \frac{1}{2} + \sqrt{x^2 - \frac{1}{4}}.
\end{align*} 
Thus, we conclude $f_2(z)>\frac{1}{2} \pm \sqrt{z^2 - \frac{1}{4}}$. Consequently, $f_2^2(z) - f_2(z) -z^2 + \frac{1}{2}=0$ has no solution. Furthermore, we observe $\lim_{z \to 0} f_2^2(z) - f_2(z) -z^2 + \frac{1}{2}= \frac{1}{2}$, thereby confirming the statement.
\end{proof}

\subsubsection{Series Expansion}
To evaluate the sign of the second variation,  we adopt the approach outlined in \cite[Section 8]{Wagner1}, which entails representing $u'$ and $\langle v, \nu \rangle$ in an appropriate orthonormal basis. To this end, we employ the Steklov eigenvalue problem
\begin{align}\label{eq:SteklovExt}
\begin{cases}
\Delta \phi_k + \lambda_1^\alpha(0) \phi_k = 0  &\text{ in } B_R^{\text{ext}} ,\\
-\partial_\nu \phi_k + \alpha \phi_k = \mu_k \phi_k &\text{ on } \partial B_R.
\end{cases}
\end{align}
For $\lambda_1^\alpha(0) \neq 0$, the corresponding bilinear form
\begin{align*}
a: W^{1,2}(\Omega^{\text{ext}}) \times W^{1,2}(\Omega^{\text{ext}}) \to \R, \, \,  (u,v) \mapsto \int_{\Omega^{\text{ext}}} \langle \nabla u, \nabla v \rangle \, \mathrm{d}x -\lambda_1^\alpha(0) \int_{\Omega^{\text{ext}}}  u v \, \mathrm{d}x
\end{align*}
fulfills 
\begin{align*}
\min \{ 1, | \lambda_1^\alpha(0) | \} ||u ||_{W^{1,2}(\Omega^{\text{ext}})}^2 \leq a(u,u) \leq \max \{ 1, | \lambda_1^\alpha(0) | \} ||u ||_{W^{1,2}(\Omega^{\text{ext}})}^2.
\end{align*}
Hence, \eqref{eq:SteklovExt} possesses a sequence of eigenvalues $ \left( \mu_k \right)_{k \in \mathbb{N}} $ accumulating at infinity. The corresponding eigenfunctions $\left( \phi_k \right)_{k \in \mathbb{N}} \subset W^{1,2}(\Omega^{\text{ext}})$ form an orthogonal basis, as discussed in \cite[Section 4]{AuchmutyNeu}. Consequently, we can write $u' = \sum_{k=0}^\infty c_k \phi_k$. Notably, in contrast to Section \ref{subsc:charal}, we operate within the space $W^{1,2}(\Omega^{\text{ext}})$ instead of $E^1(\Omega^{\text{ext}})$.

To solve \eqref{eq:SteklovExt}, we set $x= r \cdot \xi$ for $r \in [R, \infty), \xi \in \mathbb{S}^{n-1}$ and use the ansatz $\phi(x) = a(r) b(\xi)$. Expressing the Laplace operator as
\begin{align*}
\Delta = \frac{\partial^2}{\partial r^2} + \frac{n-1}{r} \frac{\partial }{\partial r} + \frac{1}{r^2} \Delta_{\mathbb{S}^{n-1}}	,
\end{align*}
where $\Delta_{\mathbb{S}^{n-1}}$ denotes the Laplace-Beltrami operator, we obtain
\begin{align*}
\Delta \phi(x) + \lambda_1^\alpha(0) \phi(x) = b(\xi) \left[ \frac{\mathrm{d}^2 a(r) }{\mathrm{d} r^2} + \frac{n-1}{r}  \frac{\mathrm{d} a(r) }{\mathrm{d} r}  + \lambda_1^\alpha(0) a(r) \right] + \frac{a(r)}{r^2} \Delta_{\mathbb{S}^{n-1}}  b(\xi).
\end{align*}
We set $b(\xi) = Y_{k,i}(\xi)$, representing the $i-$th spherical harmonic of degree $k \in \mathbb{N}_0$. The dimension of the space of spherical harmonics of degree $k$ is given by
\begin{align*}
d_k := \binom{n+k-1}{n-1} - \binom{n+k-3}{n-1},
\end{align*}
and  $\{Y_{k,i} \}_{k \in \N_0, i=1,\ldots,d_k}$ is an orthonormal basis of $L^2(\partial B_1)$. Furthermore, it holds
\begin{align*}
\Delta_{\mathbb{S}^{n-1}}  Y_{k,i}(\xi) +k (n+k-2)Y_{k,i}(\xi) = 0.
\end{align*}
Hence,
\begin{align*}
\Delta \phi(x) + \lambda_1^\alpha(0) \phi(x) = b(\xi) \left[ \frac{\mathrm{d}^2 a(r) }{\mathrm{d} r^2} + \frac{n-1}{r}  \frac{\mathrm{d} a(r) }{\mathrm{d} r} - \frac{a(r)}{r^2} k(n+k-2)   + \lambda_1^\alpha(0) a(r) \right].
\end{align*}
Therefore, $a(r)$ must satisfy
\begin{align*}
\frac{\mathrm{d}^2 a(r)}{\mathrm{d} r^2} + \frac{\mathrm{d} a(r)}{\mathrm{d}r} \frac{n-1}{r} - \frac{k(k+n-2)}{r^2} a(r) + \lambda_1^\alpha(0) a(r) = 0,
\end{align*}
leading to
\begin{align*}
a_{k,i}(r) = p_{k,i}  r^{-\frac{n-2}{2}} K_{\frac{n+2k-2}{2}}\left( \sqrt{-\lambda_1^\alpha(0)}r\right) + q_{k,i}  r^{-\frac{n-2}{2}} I_{\frac{n+2k-2}{2}}\left( \sqrt{-\lambda_1^\alpha(0)}r\right),
\end{align*}
with $p_{k,i},q_{k,i} \in \R$. In view of the asymptotic behavior of the Bessel functions given in \eqref{eq:BesselIasympinf} and \eqref{eq:BesselKasympinf}, we have to choose $q_{k,i} =0$ to ensure that $\phi \in L^2(\Omega^{\text{ext}})$. Hence, it holds
\begin{align*}
\frac{-\frac{\mathrm{d}}{\mathrm{d}r}\phi_{k,i} + \alpha \phi_{k,i}}{\phi_{k,i}}=& \frac{  -\frac{\mathrm{d}}{\mathrm{d}r} \left(  r^{-\frac{n-2}{2}} K_{\frac{n+2k-2}{2}}( \sqrt{-\lambda_1^\alpha(0)} r) \right) }{r^{-\frac{n-2}{2}} K_{\frac{n+2k-2}{2}}( \sqrt{-\lambda_1^\alpha(0)} r)} + \alpha   \\
=& -\frac{k}{r} + \frac{    \sqrt{-\lambda_1^\alpha(0)}  K_{\frac{n+2k}{2}}(\sqrt{-\lambda_1^\alpha(0)}r)       }{  K_{\frac{n+2k-2}{2}}( \sqrt{-\lambda_1^\alpha(0)}r)} + \alpha.
\end{align*}
Thus, the boundary condition of \eqref{eq:SteklovExt} yields
\begin{align}\label{eq:Steklovextalpha}
\mu_{k,i}(z) &= \alpha + \frac{z}{R} \frac{K_{\frac{n+2k}{2}}(z)}{K_{\frac{n+2k-2}{2}}(z)} - \frac{k}{R} = -\frac{f_n(z)}{R} + \frac{f_{n+2k}(z)}{R} - \frac{k}{R}.
\end{align}
As this is independent of $i$, we write $\mu_k := \mu_{k,i}$. Additionally, the eigenvalues $\mu_k$ exhibit the same ordering as the spherical harmonics as demonstrated in the next lemma. In particular, Lemma \ref{lemma:Order} shows that $\mu_k(z)>0$ for $k \geq 1$ and $z \in (0, \infty)$.

%
\vspace{1ex}
\begin{lemma}\label{lemma:Order}
Let $n \geq 2$. The Steklov eigenvalues are ordered analogously to the spherical harmonics, i.e. $\mu_k(z) < \mu_{k+1}(z)$ holds for all $k \in \mathbb{N}_0$ and $z \in (0, \infty)$. 
\end{lemma}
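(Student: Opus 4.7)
The plan is to reduce the desired monotonicity to a direct application of Lemma \ref{bem:a_n}. From the explicit formula \eqref{eq:Steklovextalpha}, I compute
\[
R\bigl(\mu_{k+1}(z) - \mu_k(z)\bigr) = f_{n+2k+2}(z) - f_{n+2k}(z) - 1,
\]
so the claim reduces to proving $f_{m+2}(z) - f_m(z) > 1$ for every $z > 0$ and every integer $m = n + 2k$ with $k \in \mathbb{N}_0$. Since $n \geq 2$, all such $m$ satisfy $m \geq 2$, which is exactly the range covered by Lemma \ref{bem:a_n}.

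The key algebraic step is the recurrence \eqref{eq:nochmal}, which is a consequence of the Bessel recursion \eqref{eq:RecK} and reads
\[
f_{m+2}(z) = m + \frac{z^2}{f_m(z)}.
\]
Substituting this identity and using $f_m(z) > 0$, the inequality $f_{m+2}(z) - f_m(z) - 1 > 0$ is equivalent, after multiplication by $f_m(z)$, to
\[
f_m(z)^2 - (m-1)\,f_m(z) - z^2 < 0,
\]
which is precisely the statement $a_m(z) < 0$.

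Applying Lemma \ref{bem:a_n} with the dimension parameter replaced by $m = n + 2k \geq 2$ immediately yields $a_m(z) < 0$ for all $z > 0$, hence $\mu_{k+1}(z) > \mu_k(z)$ as required. There is no real obstacle at this stage: the substantive inequalities for ratios of modified Bessel functions were already established in Lemmas \ref{lemma:fmonfal}--\ref{lemma:an}, and the ordering of the Steklov eigenvalues now follows from a short algebraic manipulation together with the already-proved sign of $a_m$.
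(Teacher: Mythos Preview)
Your proof is correct and follows essentially the same route as the paper: both reduce $\mu_{k+1}(z)>\mu_k(z)$ via \eqref{eq:Steklovextalpha} and the recurrence $f_{m+2}(z)=m+\tfrac{z^2}{f_m(z)}$ to the inequality $a_{n+2k}(z)<0$, and then invoke Lemma~\ref{bem:a_n}. The only cosmetic difference is that the paper separates the case $k=0$ (showing $\mu_1>0=\mu_0$ directly), whereas you handle all $k\in\mathbb{N}_0$ uniformly.
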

\begin{proof}
For $k \in \N$, the inequality $\mu_{k+1}(z) > \mu_{k}(z)$ is equivalent to
\begin{align*}
 \, f_{n+2k+2}(z)-1 >& \, f_{n+2k}(z) \\
\Leftrightarrow \, \frac{z^2}{f_{n+2k}(z)}+n+2k-1 >& \, f_{n+2k}(z) \\
\Leftrightarrow \, z^2 + (n+2k-1)f_{n+2k}(z)-f_{n+2k}(z)^2 >& \, 0\\
 \Leftrightarrow \, -a_{n+2k}(z) >& \, 0.
\end{align*}
In  Lemma \ref{bem:a_n} it  has been established that $a_m(z) < 0$ for all $z \in (0, \infty), m\geq 2$.  Since $\mu_0 = 0$, it remains to show $\mu_1 > 0$. But analogously we obtain $\mu_1 > 0 \Leftrightarrow -a_n > 0$. Hence, the assertion is proven.
\end{proof}

As mentioned earlier, we express the shape derivative in terms of eigenfunctions of \eqref{eq:SteklovExt} to determine the sign of the second variation. If $u' = \sum_{k=0}^\infty \sum_{i=1}^{d_k}  c_{k,i} \phi_{k,i}$, and we define $b_{k,i} :=\frac{c_{k,i} \mu_k}{-K u(R)}$, the boundary condition of Lemma \ref{lemma:u'Kugel} implies
\begin{align*}
\langle v, \nu \rangle = \sum_{k=0}^\infty \sum_{i=1}^{d_k} \frac{  c_{k,i} \left( \partial_\nu \phi_{k,i} - \alpha \phi_{k,i} \right)}{K u(R)} = \sum_{k=0}^\infty \sum_{i=1}^{d_k} \frac{c_{k,i} \mu_k}{-K u(R)} \phi_{k,i}= \sum_{k=0}^\infty \sum_{i=1}^{d_k} b_{k,i} \phi_{k,i}.
\end{align*}
An illustration of such perturbations can be found in \cite[Section 3.3.1]{BundPhD}. Since the spherical harmonics of degree $0$ are constant functions, we obtain
\begin{align*}
\phi_{0,j}  \dot{V}(0)=&  \int_{\partial B_R}  \langle v, \nu \rangle \phi_{0,j}\,  \mathrm{d}S = \int_{\partial B_R}  \sum_{k=2}^\infty \sum_{i=1}^{d_k}  b_{k,i} \phi_{k,i} \phi_{0,j}\,  \mathrm{d}S \\
=& \sum_{k=2}^\infty \sum_{i=1}^{d_k}  b_{k,i} \int_{\partial B_R}   \phi_{k,i} \phi_{0,j} \,  \mathrm{d}S = \sum_{k=2}^\infty \sum_{i=1}^{d_k}  b_{k,i} \delta_{k,0} \delta_{j,i} = b_{0,j} .
\end{align*}
Thus, $\Phi$ is measure preserving if and only if $b_{0,i} = 0$ (and therefore also $c_{0,i}=0$) for all $1 \leq i \leq d_0$. Similarly, by incorporating spherical harmonics of degree $1$, we have
\begin{align*}
\Phi \text{ satisfies the barycenter condition} \Leftrightarrow b_{1,i} = 0 \, \text{  for all } \, 1 \leq i \leq d_1.
\end{align*}

In summary, assuming that the perturbation is measure preserving and satisfies the barycenter condition, we obtain the representations
\begin{align}\label{eq:u'reihe}
u' = \sum_{k=2}^\infty \sum_{i=1}^{d_k}  c_{k,i} \phi_{k,i} \, \, \text{ and }\, \, \langle v, \nu \rangle = \sum_{k=2}^\infty \sum_{i=1}^{d_k}  b_{k,i} \phi_{k,i}
\end{align}
with $b_{k,i} = \frac{c_{k,i} \mu_k}{K u(R)}$. We insert this representation into \eqref{eq:ddotlambda0}. Integration by parts yields
\begin{align*}
\mathcal{Q}(u') &= \int_{B_R^{\text{ext}}} | \nabla u' |^2 - \lambda_1^\alpha(0)  \left( u' \right)^2 \,  \mathrm{d}x + \alpha \int_{\partial B_R} \left( u' \right)^2\,  \mathrm{d}S \\
&= \int_{\partial B_R}  \left( -\partial_\nu u' + \alpha u' \right) u' \,  \mathrm{d}S \\
 &=\int_{\partial B_R} \left(  \sum_{k=2}^\infty \sum_{i=1}^{d_k}  c_{k,i} \left( -\partial_{\nu} \phi_{k,i} + \alpha \phi_{k,i} \right) \right) \left( \sum_{p=2}^\infty \sum_{q=1}^{d_p}  c_{p,q} \phi_{p,q} \right)\,  \mathrm{d}S \\
&= \sum_{k=2}^\infty \sum_{i=1}^{d_k} \sum_{p=2}^\infty \sum_{q=1}^{d_p}  c_{p,q}  c_{k,i}  \mu_{k}  \int_{\partial B_R}    \phi_{k,i}    \phi_{p,q} \,  \mathrm{d}S = \sum_{k=2}^\infty \sum_{i=1}^{d_k}   c_{k,i}^2  \mu_{k}.
\end{align*}
Additionally, it holds 
\begin{align*}
\int_{\partial B_R} \langle v, \nu \rangle^2\,  \mathrm{d}S = \sum_{k=2}^\infty \sum_{i=1}^{d_k}    \sum_{p=2}^\infty \sum_{q=1}^{d_p} b_{k,i}  b_{p,q} \int_{\partial B_R} \phi_{k,i}  \phi_{p,q}\,  \mathrm{d}S =\sum_{k=2}^\infty \sum_{i=1}^{d_k} b_{k,i}^2.
\end{align*}
The second variation of the surface, $\ddot{\mathcal{S}}(0)$ is given in \cite[(2.3.28)]{Wagner1} and \cite[Lemma 13.1]{Wagner1}. Assuming the perturbation is measure preserving of second order, it holds
\begin{align*}
\ddot{\mathcal{S}}(0) &= \frac{1}{R^2} \sum_{k=2}^\infty \sum_{i=1}^{d_k} {k(n+k-2)}  b_{k,i}^2 -\frac{n-1}{R^2} \sum_{k=2}^\infty \sum_{i=1}^{d_k}   b_{k,i}^2.
\end{align*}
Inserting this into \eqref{eq:ddotlambda0} results in
\begin{align}\label{eq:lambda..Kreis}
\ddot{\lambda}_1^\alpha(0) =& \, 2 \alpha u(R)^2 K \int_{\partial B_R} \langle v, \nu \rangle^2\,  \mathrm{d}S + \alpha u^2(R) \ddot{\mathcal{S}}(0) -2 \mathcal{Q}(u') \nonumber \\
=& \,  \alpha u(R)^2 \sum_{k=2}^\infty \sum_{i=1}^{d_k} \left[ 2 K + \frac{1}{R^2} \left[ k(n+k-2) -(n-1) \right] \right] b_{k,i}^2 -2 \sum_{k=2}^\infty \sum_{i=1}^{d_k}   c_{k,i}^2  \mu_{k} \nonumber \\
=& \,   \sum_{k=2}^\infty \sum_{i=1}^{d_k} \alpha u(R)^2 \left[ 2 K + \frac{1}{R^2} \left[ k(n+k-2) -(n-1) \right] \right] b_{k,i}^2 -2 \frac{(b_{k,i} K u(R))^2}{\mu_{k}}  \nonumber \\
=& \,  u(R)^2 \sum_{k=2}^\infty \sum_{i=1}^{d_k} \left[ 2 K  \alpha + \frac{ \alpha}{R^2} \left[ k(n+k-2) -(n-1) \right]  -2 \frac{K^2}{\mu_{k} }\right] b_{k,i}^2.
\end{align}
Supposing $L(\alpha, n,k,R) :=\left[ 2 K  \alpha + \frac{ \alpha}{R^2} \left[ k(n+k-2) -(n-1) \right]  -2 \frac{K^2}{\mu_{k} }\right] < 0$ holds for all $\alpha < \alpha^*, n\geq 2, k \geq 2$ and $R>0$, we can conclude that the second variation is negative. To show $L(\alpha, n,k,R) < 0$, we use  the inequalities from Section \ref{sec:ineqbess}.
\vspace{1ex}
\begin{theorem}\label{satz:lambda..ext}
For all $n \geq 2$, $R>0$ and  $\alpha< \alpha^*(B_R^\text{ext})$ it holds that $\ddot{\lambda}_1^\alpha(0)  < 0$.
\end{theorem}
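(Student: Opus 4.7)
The plan is to exploit the Fourier representation \eqref{eq:lambda..Kreis}, which writes $\ddot\lambda_1^\alpha(0)$ as a weighted sum over spherical harmonic indices $k \ge 2$, and to prove that the bracketed coefficient
\[
L(\alpha,n,k,R) := 2K\alpha + \frac{\alpha}{R^2}\bigl[k(n+k-2)-(n-1)\bigr] - \frac{2K^2}{\mu_k}
\]
is strictly negative for every $k \ge 2$, $n \ge 2$, $R > 0$, and $\alpha < \alpha^*(B_R^{\mathrm{ext}})$. Since the modes $k = 0,1$ are killed by the measure-preserving and barycenter conditions while the remaining $b_{k,i}$ are otherwise arbitrary, term-by-term negativity of $L$ immediately yields $\ddot\lambda_1^\alpha(0) < 0$.

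To analyze $L$, I would pass to the dimensionless variables of Definition \ref{def:fn}. The radial Robin boundary condition on $B_R$ identifies $y := -\alpha R = f_n(z)$ with $z := R\sqrt{-\lambda_1^\alpha(0)}$, so that $R^2 K = a_n(z)$. Writing $A := -a_n(z) > 0$ (by Lemma \ref{bem:a_n}), $\beta_k := R\mu_k = f_{n+2k}(z) - f_n(z) - k > 0$ (by Lemma \ref{lemma:Order}), and $p_k := k(n+k-2) - (n-1)$, the target $L < 0$ rewrites as
\[
y\,\beta_k\,(2A - p_k) < 2A^2.
\]

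First I would split on the sign of $2A - p_k$. When $2A \le p_k$, the left-hand side is non-positive while the right-hand side is strictly positive, so the inequality is automatic. Combining the upper bound $A \le n-2$ for $n \ge 3$ (respectively $A \le 1/2$ for $n = 2$) from Lemma \ref{lemma:an} with the explicit $p_k$ shows that this easy regime already covers every admissible pair except $k = 2$, $n \ge 6$. For this remaining subcase, applying the recurrence \eqref{eq:nochmal} twice yields $f_{n+2}(z) = z^2/y + n$ and $f_{n+4}(z) = z^2/f_{n+2}(z) + n + 2$, which simplify to the closed identity
\[
y\,\beta_2 = \frac{n\,y\,(A+y)}{A + y(y+1)}.
\]
Clearing denominators in $y\beta_2(2A - n - 1) < 2A^2$ and substituting $z^2 = A + y^2 - (n-1)y$ reduces the claim to the polynomial inequality
\[
2A^2 z^2 + n y A (n+1 - 2y) + n(n+1) y^2 > 0,
\]
which, in the sub-regime $y \le (n+1)/2$, is a sum of non-negative terms and hence trivial.

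The principal obstacle is precisely the sub-regime $k=2$, $n \ge 6$, $y > (n+1)/2$: the middle term is then negative, and the coarse bound $A \le n-2$ alone does not force positivity. Closing the estimate requires exploiting that $A$ and $y$ are coupled along the one-parameter curve $z \mapsto (f_n(z), -a_n(z))$, so that the full strength of the Bessel-ratio inequalities of Section \ref{sec:ineqbess} --- in particular $y \ge z + (n-1)/2$ from Lemma \ref{lemma:fmonfal}, used jointly with the upper bound on $A$ --- is what ultimately delivers the desired polynomial estimate.
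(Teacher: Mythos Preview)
Your reduction to the sign of $L(\alpha,n,k,R)$ and the treatment of all pairs $(n,k)$ except $k=2$, $n\ge 6$ are correct and essentially identical to the paper's argument.

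The gap is in the remaining case $k=2$, $n\ge 6$. Your split into $y\le(n+1)/2$ versus $y>(n+1)/2$ is illusory: since $y=f_n(z)>n-2$ and $n-2>(n+1)/2$ for every $n\ge 6$, the ``trivial'' sub-regime is empty, so your polynomial inequality $2A^2z^2+nyA(n+1-2y)+n(n+1)y^2>0$ must be established in full and you have not done so. Moreover, the tool you point to, Lemma~\ref{lemma:fmonfal}, only yields $A\le (n-1)^2/4$ after substituting $z^2=A+y^2-(n-1)y$, and this is \emph{weaker} than the bound $A\le n-2$ you already have from Lemma~\ref{lemma:an} whenever $n\ge 4$; so invoking it ``jointly'' does not by itself close the estimate.

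The paper handles this case by a different manoeuvre: instead of your direct polynomial attack, it completes the square in $a_n$ to rewrite $L<0$ as
\[
\Bigl(a_n(z)+\tfrac{y\beta_2}{2}\Bigr)^2+\tfrac{y\beta_2}{2}\Bigl(n+1-\tfrac{y\beta_2}{2}\Bigr)>0,
\]
and then proves the single scalar bound $y\beta_2<2(n+1)$. This is done by a zero-counting argument: the function $z\mapsto n+1-\tfrac{1}{2}y\beta_2$ is continuous with positive limits at $z\to 0^+$ and $z\to\infty$, and a hypothetical zero $(z_0,y_0)$ is ruled out by solving the resulting algebraic relation for $z_0$ in terms of $y_0$ and checking, via Segura's lower bound \eqref{eq:segura} (not Lemma~\ref{lemma:fmonfal}), that $f_n(z_0)>y_0$, contradicting $y_0=f_n(z_0)$. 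Completing the square is what makes a one-variable bound sufficient; your formulation still mixes $A$, $y$, and $z$ and therefore needs a genuinely two-parameter estimate that you have not supplied.
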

\begin{proof}
Considering Definition \ref{def:fn} and $R^2 K= a_n$, we obtain
\begin{align*}
 &R^4 \mu_k(z) L(\alpha, n,k,R) \\
=&-2 y a_n(z) R \mu_k(z)  -2 a_n(z)^2 -y R \mu_k(z)  [k^2+(n-2)k-(n-1)]     \\
=&-2 a_n(z)^2 - y  R \mu_k(z) \left[ 2 a_n(z) +k^2+(n-2)k-(n-1) \right].
\end{align*} 
For $n=2$, it holds that
\begin{align*}
R^4 \mu_k(z) L(\alpha, n,k,R) = -2 a_2(z)^2 - y  R \mu_k(z) \left[ 2 a_2(z) +k^2-1 \right].
\end{align*}
Using  Lemma \ref{lemma:an}, i.e. $a_2(z) > - \frac{1}{2}$, we obtain $2 a_2(z) +k^2-1 > k^2 -2 >0$. Hence, $L(\alpha, 2,k,R) < 0$ holds for all $k \geq 2$, $R>0$ and  $\alpha< \alpha^*(B_R^\text{ext})$.

For $n \geq 3$, we distinguish two cases. \\
\textbf{Case 1: $k \geq 3$}. Due to Lemma \ref{lemma:an}, i.e. $a_n(z) > - (n-2)$, it holds
\begin{align*}
&2 a_n(z) +k^2+(n-2)k-(n-1) >  (k-3)n+k^2-2k+5 > 0.
\end{align*}
Thus, we conclude
\begin{align*}
R^4 \mu_k(z) L(\alpha, n,k,R) = -2 a_n(z)^2 - y  R \mu_k(z) \left[ 2 a_n(z) +k^2+(n-2)k-(n-1) \right]  < 0.
\end{align*}
\textbf{Case 2: $k=2$}. For $n \leq 5$, the same reasoning as in Case 1 applies. However, for $n \geq 6$, the calculations become more intricate. It holds 
\begin{align*}
L(\alpha, n,2,R) < 0 \, \Leftrightarrow& \, a_n(z)^2 + R \mu_2(z) y a_n(z) +R \mu_2(z) y \frac{n+1}{2} >0 \\
\Leftrightarrow& \, \left( a_n(z) + \frac{R \mu_2(z) y}{2} \right)^2 - \frac{(R \mu_2(z) y)^2}{4} +R \mu_2(z) y \frac{n+1}{2} >0 \\
\Leftrightarrow& \, \left( a_n(z) + \frac{R \mu_2(z) y}{2} \right)^2 + \frac{R \mu_2(z) y}{2} \left( n+1-\frac{R \mu_2(z) y}{2} \right) >0.
\end{align*}
We aim to prove $\frac{R \mu_2(z) y}{2} < n+1$, which makes the last inequality clearly valid, thus proving $L(\alpha, n,2,R) < 0$. Because of 
\begin{align*}
{R \mu_2(z) y} =  \frac{z K_{\frac{n}{2}}(z)}{K_{\frac{n-2}{2}}(z)} \left( - \frac{K_{\frac{n}{2}}(z) z}{ K_{\frac{n-2}{2}}(z)} + \frac{K_{\frac{n+4}{2}}(z) z}{ K_{\frac{n+2}{2}}(z)} - 2 \right) =   y \left( - y + \frac{K_{\frac{n+4}{2}}(z) z}{ K_{\frac{n+2}{2}}(z)} - 2 \right),
\end{align*}
it holds
\begin{align*}
n+1 - \frac{R \mu_2(z) y}{2} = n+1 - \frac{y}{2} \left( -y+z \frac{K_{\frac{n+4}{2}}(z)}{K_{\frac{n+2}{2}}(z)} -2 \right).
\end{align*}
Applying \eqref{eq:RecK}, we obtain $z \frac{K_{\frac{n+4}{2}}(z)}{K_{\frac{n+2}{2}}(z)} = n+2 + z \frac{K_{\frac{n}{2}}(z)}{K_{\frac{n+2}{2}}(z)}$ and $z \frac{K_{\frac{n}{2}}(z)}{K_{\frac{n+2}{2}}(z)} = \frac{z^2y}{ny+z^2}$. Thus, 
\begin{align*}
n+1 >\frac{R \mu_2(z) y}{2} \, \Leftrightarrow \, n+1 > \frac{y}{2} \left( -y+n+\frac{z^2y}{ny+z^2} \right).
\end{align*}
Given $\lim_{z \to 0} y=\lim_{z \to 0} f_n(z) = n-2$, and considering \eqref{eq:BesselKasympinf}, we have
\begin{align*}
\lim_{z \to 0} n+1 - \frac{y}{2} \left( -y+n+\frac{z^2y}{ny+z^2} \right) > 0, \, \, \, \lim_{z \to \infty} n+1 - \frac{y}{2} \left( -y+n+\frac{z^2y}{ny+z^2} \right) > 0.
\end{align*}
Furthermore, the function $\frac{\mu_2(z) y}{2}$ is continuous.  Thus, if  $n+1 -  \frac{y}{2} \left( -y+n+\frac{z^2y}{ny+z^2} \right)$ has no zeros, we obtain $n+1 > \frac{R \mu_2(z) y}{2}$. Since $z>0$, the only possible zero occurs at
\begin{align*}
z_0= \frac{\sqrt{-(ny-2n-2)ny(ny-y^2-2n-2))}}{ny-2n-2}.
\end{align*}
However, according to \eqref{eq:segura},
\begin{align*}
& \,f_n(z_0)-y > \frac{n-2}{2} + \sqrt{\frac{(n-2)^2}{4}+z_0^2}-y \\
=& \,\frac{1}{2} \left( n-2-2y + \sqrt{\frac{-8+n^3(y-2)+n^2(-4y^2+4y+6)+n(4y^3+12y)}{-2+(y-2)n}} \right).
\end{align*}
This expression can only be negative if $n-2-2y<0$. Consequently,
\begin{align*}
&\frac{1}{2} \left( n-2-2y + \sqrt{\frac{-8+n^3(y-2)+n^2(-4y^2+4y+6)+n(4y^3+12y)}{-2+(y-2)n}} \right) > 0 \\
\Leftrightarrow \, &\sqrt{\frac{-8+n^3(y-2)+n^2(-4y^2+4y+6)+n(4y^3+12y)}{-2+(y-2)n}} > 2y-n+2 \\
\Leftrightarrow \, &\frac{8y(2n+y+2)}{-2+(y-2)n} > 0.
\end{align*}
The numerator is obviously positive, and so is the denominator by the assumption $y > n-2$ and $n \geq 3$. Thus, $n+1 -  \frac{y}{2} \left( -y+n+\frac{z^2y}{ny+z^2} \right)$ has no zeros. Consequently, $L(\alpha,n,2,R)<0$ is shown, concluding the proof.
\end{proof}
Combining Theorem \ref{satz:lambda..ext} and \eqref{eq:dotlambda1radial} yields that the exterior of the ball maximizes $\lambda_1^\alpha(\Omega^\text{ext})$ locally among all nearly spherical domains with prescribed measure. However, the neighborhood of $B_R$ in which the ball maximizes $\lambda_1^\alpha(\Omega^\text{ext})$ might depend on the particular perturbation. We conjecture that an appropriate choice of condition \eqref{enum:ot1} (page \pageref{enum:ot1}) addresses this issue.

 \vspace{1ex}
\begin{remark}
For $k=1$, the perturbation is volume preserving. Consequently, the first variation remains zero. Regarding the second variation, we obtain 
\begin{align*}
R^4 \mu_1(z)L(\alpha,n,1,R) = -2 a_n(z) \left( a_n(z) + y R \mu_1(z) \right).
\end{align*}
Due to
\begin{align*}
y R \mu_1(z) &= f_n(z) \left( -f_n(z) + f_{n+2}(z) -1 \right) = f_n(z) \left( -f_n(z) + \left[ \frac{z^2}{f_n(z)} +n \right] -1 \right) \\ 
&= -f_n(z)^2 + (n-1) f_n(z) +z^2 = -a_n(z),
\end{align*}
we obtain $L(\alpha,n,1,R)=0$. This is in line with the idea that spherical harmonics of degree $1$ merely shift the domain, implying that the first eigenvalue remains unchanged.
\end{remark}

\subsection{Quantitative Inequalities}\label{sec:quantext}
To derive a quantitative inequality for $\lambda_1^\alpha(t)$, we want to find an inequality of the form 
\begin{align}\label{eq:quantidee}
\lambda_1^\alpha(0)-\lambda_1^\alpha(t) \geq c \left[ \mathcal{S}(t)-\mathcal{S}(0) \right] \, \text{ for } \, |t| \text{ small},
\end{align}
where $\mathcal{S}(t) := \mathcal{S}(\Omega_t) = | \partial \Omega_t|$ and $c$ is independent of the perturbation. It is known that the unique minimizer of the surface area, among all domains with prescribed volume, except translations,  is the the ball. Consequently, we have $\mathcal{S}(t) -\mathcal{S}(0) > 0$ for $t \neq 0$. Furthermore, inequalities for $\mathcal{S}(t)-\mathcal{S}(0)$ are known which involve the geometry of $\Omega_t$, see e.g. \cite{fusco2008sharp}. To achieve \eqref{eq:quantidee}, we follow a similar approach as outlined in \cite[Section 10]{Wagner1}. In order to determine $c$, we need Lemma \ref{lemma:quant}. The proof has the same structure as the proof of Theorem \ref{satz:lambda..ext}. A detailed proof of Lemma \ref{lemma:quant} can be found in \cite[Lemma 3.17]{BundPhD}.

For the remainder of this section, we assume $n \geq 2$, and we assume that $\Phi$ is perturbation of $B_R^\text{ext}$ satisfying the conditions \eqref{enum:1} - \eqref{enum:f} (page \pageref{enum:ot1}, \pageref{enum:f}). Additionally, we assume that $\alpha < \alpha^*(\Omega_t^\text{ext})$ for all $|t| < t_0$.

\vspace{1ex}
\begin{lemma}\label{lemma:quant}
It holds
\begin{align*}
\frac{\mathcal{S}(0)}{\lambda_1^\alpha(0)} \frac{\ddot{\lambda}_1^\alpha(0)}{\ddot{\mathcal{S}}(0)} \geq \frac{\alpha u(R)^2}{n+1} \frac{\mathcal{S}(0)}{\lambda_1^\alpha(0)} > 0.
\end{align*}
\end{lemma}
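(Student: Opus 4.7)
The plan is to analyze the inequality mode-by-mode using the Steklov spectral decomposition from Section \ref{signofsec}, paralleling the proof of Theorem \ref{satz:lambda..ext}. The rightmost inequality $\tfrac{\alpha\,u(R)^2}{n+1}\,\tfrac{\mathcal{S}(0)}{\lambda_1^\alpha(0)} > 0$ is immediate, since both factors are negative ($\alpha < 0$ and $\lambda_1^\alpha(0) < 0$). For the main inequality I divide by $\mathcal{S}(0)/\lambda_1^\alpha(0) < 0$, which flips its sense, reducing the claim to
\begin{equation*}
\frac{\ddot{\lambda}_1^\alpha(0)}{\ddot{\mathcal{S}}(0)} \;\leq\; \frac{\alpha\,u(R)^2}{n+1},
\end{equation*}
an inequality between two negative quantities: the left by Theorem \ref{satz:lambda..ext} together with the isoperimetric fact $\ddot{\mathcal{S}}(0) > 0$, and the right by $\alpha < 0$.

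Substituting the series representations
\begin{equation*}
\ddot{\lambda}_1^\alpha(0) = u(R)^2 \sum_{k\geq 2,\,i} L(\alpha,n,k,R)\,b_{k,i}^2, \qquad \ddot{\mathcal{S}}(0) = \frac{1}{R^2}\sum_{k\geq 2,\,i} M_k\,b_{k,i}^2,
\end{equation*}
with $M_k := k(n+k-2)-(n-1) > 0$ for $k \geq 2$, turns the inequality into a weighted-average comparison of $R^2 L(\alpha,n,k,R)/M_k$ against $\alpha/(n+1)$, so it suffices to prove the mode-wise estimate $(n+1)\,R^2 L(\alpha,n,k,R) \leq \alpha\,M_k$ for every $k \geq 2$. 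Using the identity $R^4 \mu_k(z)\,L(\alpha,n,k,R) = -2\,a_n(z)^2 - y R \mu_k(z)\,[\,2\,a_n(z) + M_k\,]$ derived in the proof of Theorem \ref{satz:lambda..ext}, and $\alpha R = -y$, this estimate is equivalent, after clearing the positive factor $R^2 \mu_k(z)$, to
\begin{equation*}
2(n+1)\,a_n(z)^2 \;+\; \bigl[\,2(n+1)\,a_n(z) + n\,M_k\,\bigr]\, y R \mu_k(z) \;\geq\; 0.
\end{equation*}

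Setting $s := y R \mu_k(z) > 0$, the left-hand side is affine in $s$ with positive intercept $2(n+1)\,a_n(z)^2$; if its slope $2(n+1)\,a_n(z) + n M_k$ is non-negative, the estimate is automatic. The lower bound $a_n(z) \geq -(n-2)$ (Lemma \ref{lemma:an}) combined with $M_k \geq M_3 = 2n+4$ for $k\geq 3$ gives this slope bound in all modes $k \geq 3$, and the analogous estimate for $n = 2$ via $a_2(z) \geq -\tfrac12$ plus the computation for $k=2$ in low dimensions $n \leq 4$ disposes of those cases as well. The anticipated main obstacle, mirroring Case~2 in the proof of Theorem \ref{satz:lambda..ext}, is the mode $k = 2$ in dimensions $n \geq 5$: there $M_2 = n+1$ and the slope may change sign. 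I plan to close that case by combining the upper bound $y R \mu_2(z) < 2(n+1)$ established there via \eqref{eq:segura} with Lemma \ref{lemma:an} and a discriminant analysis of $2\,a_n(z)^2 + (2\,a_n(z) + n)\,s$, viewed as a quadratic in $a_n(z)$, to confine the expression to its non-negative region.
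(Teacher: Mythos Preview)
Your proposal follows the strategy the paper itself indicates (it says the proof ``has the same structure as the proof of Theorem~\ref{satz:lambda..ext}'' and defers details to \cite{BundPhD}): reduce mode-by-mode, dispose of $k\geq 3$ by a slope estimate, and treat $k=2$ via the Bessel bounds. Your algebra is correct; the target inequality is
\[
2(n+1)\,a_n(z)^2 + \bigl[2(n+1)\,a_n(z) + n M_k\bigr]\,s \;\geq\; 0,\qquad s:=yR\mu_k(z),
\]
and the slope bound from Lemma~\ref{lemma:an} handles $k\geq 3$ as well as $k=2$ for $n\leq 4$, just as you say.

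There is, however, a concrete gap in the $k=2$ plan for large $n$. After dividing by $n+1$, the required estimate is $2a_n^2+2sa_n+ns\geq 0$, whose discriminant in $a_n$ equals $4s(s-2n)$. The bound you import from Theorem~\ref{satz:lambda..ext} is $s<2(n+1)$, not $s\leq 2n$, so the discriminant may be positive. On the range $a_n\geq -(n-2)$ and $s>2n$ the quadratic is increasing in $a_n$, so its minimum is at $a_n=-(n-2)$, giving
\[
2(n-2)^2 + s(4-n),
\]
which at the allowed upper end $s\to 2(n+1)$ equals $16-2n$ and is \emph{negative} for $n\geq 9$. Hence the ingredients you list---$s<2(n+1)$, Lemma~\ref{lemma:an}, and a discriminant argument---do not by themselves confine the expression to the non-negative region once $n\geq 9$.

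To close this case you need a sharper input: either establish $s=yR\mu_2(z)<2n$ (which makes the discriminant non-positive and finishes immediately), or mimic the $z_0$-analysis in Case~2 of Theorem~\ref{satz:lambda..ext}, i.e.\ show via \eqref{eq:segura} that no $z$ with $y=f_n(z)$ satisfies $2a_n(z)^2+2s(z)a_n(z)+ns(z)=0$. This is precisely the refinement the thesis proof presumably carries out, and it fits the ``same structure'' description; but as written, your plan is incomplete for $n\geq 9$.
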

\vspace{1ex}

To prove \eqref{eq:quantidee}, we introduce the following notations.
\vspace{1ex}
\begin{definition}
For $|t| < t_0$ we define
\begin{align*}
\delta \mathcal{S}(t):= \frac{\mathcal{S}(t)}{\mathcal{S}(0)}-1, \, \, \delta \lambda_1^\alpha(t):= \frac{\lambda_1^\alpha(t)}{\lambda_1^\alpha(0)} -1, \, \, \delta(t) :=  \frac{\delta \lambda_1^\alpha(t)}{\delta \mathcal{S}(t)}.
\end{align*}
\end{definition}
\vspace{1ex}
Using the first two definitions, $\delta(t)$ equals $\delta(t) =  - \frac{\mathcal{S}(0)}{\lambda_1^\alpha(0)} \frac{\lambda_1^\alpha(0)-\lambda_1^\alpha(t)}{\mathcal{S}(t)-\mathcal{S}(0)}$. Thus, a lower bound for $\delta$ gives the desired inequality, as shown in Theorem \ref{satz:quantineq}.
\vspace{1ex}
\begin{theorem}\label{satz:quantineq}
We define
\begin{align*}
C_1:=& \sup_{| t | < t_0} \frac{\left| \lambda_1^\alpha(t) - \lambda_1^\alpha(0) - t \dot{\lambda}_1^\alpha(0) - \frac{t^2}{2} \ddot{\lambda}_1^\alpha(0) \right|}{t^3}, \\
C_2:=& \sup_{| t | < t_0} \frac{\left| \mathcal{S}(t) - \mathcal{S}(0) - t \dot{\mathcal{S}}(0) - \frac{t^2}{2} \ddot{\mathcal{S}}(0) \right|}{t^3}.
\end{align*}
Then, for every $\varepsilon > 0$ and $|t| < \min \left\{ \frac{-  \varepsilon \ddot{\lambda}_1^\alpha(0)  }{2C_1} , \frac{  \varepsilon \ddot{\mathcal{S}}(0)  }{2C_2}, t_0 \right\}$, it holds
\begin{align*}
\lambda_1^\alpha(0)-\lambda_1^\alpha(t) > -\alpha u(R)^2 \frac{\mathcal{S}(t)-\mathcal{S}(0)}{n+1}   \frac{1- \varepsilon}{1+ \varepsilon}.
\end{align*}
\end{theorem}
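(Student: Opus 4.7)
The plan is to combine the Taylor-remainder definitions of $C_1$ and $C_2$ with Lemma \ref{lemma:quant}. First I would observe that for a perturbation satisfying \eqref{enum:1}--\eqref{enum:f}, both first variations at $t=0$ vanish: equation \eqref{eq:dotlambda1radial} gives $\dot{\lambda}_1^\alpha(0) = u^2(R) K \dot{V}(0) = 0$ by measure preservation, and $\dot{\mathcal{S}}(0) = 0$ follows from the Hadamard formula for the first variation of the perimeter together with the constancy of the mean curvature on $\partial B_R$ and $\dot{V}(0)=0$. Writing $R_1(t) := \lambda_1^\alpha(t) - \lambda_1^\alpha(0) - \frac{t^2}{2}\ddot{\lambda}_1^\alpha(0)$ and $R_2(t) := \mathcal{S}(t) - \mathcal{S}(0) - \frac{t^2}{2}\ddot{\mathcal{S}}(0)$, the very definitions of $C_1$ and $C_2$ give $|R_i(t)| \leq C_i |t|^3$ for $|t| < t_0$.

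Next I would translate the smallness conditions on $|t|$ into pointwise bounds on the remainders. Since $\ddot{\lambda}_1^\alpha(0) < 0$ by Theorem \ref{satz:lambda..ext}, and $\ddot{\mathcal{S}}(0) > 0$ (as may be read off the explicit spherical-harmonic expansion of $\ddot{\mathcal{S}}(0)$ from Section \ref{sec:firandsecvar}, since $k(n+k-2)-(n-1) > 0$ for $k \geq 2$), the assumptions $|t| < -\varepsilon\ddot{\lambda}_1^\alpha(0)/(2 C_1)$ and $|t| < \varepsilon \ddot{\mathcal{S}}(0)/(2 C_2)$ yield $|2 R_1(t)/t^2| < \varepsilon|\ddot{\lambda}_1^\alpha(0)|$ and $|2 R_2(t)/t^2| < \varepsilon \ddot{\mathcal{S}}(0)$. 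For $\varepsilon \in (0,1)$ (for $\varepsilon \geq 1$ the statement is vacuous, since the right-hand side is then non-positive while the left-hand side is positive by Theorem \ref{satz:lambda..ext}), this gives
\begin{align*}
\lambda_1^\alpha(0) - \lambda_1^\alpha(t) > -\frac{t^2}{2}(1-\varepsilon)\ddot{\lambda}_1^\alpha(0) > 0, \qquad 0 < \mathcal{S}(t) - \mathcal{S}(0) < \frac{t^2}{2}(1+\varepsilon)\ddot{\mathcal{S}}(0).
\end{align*}

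Dividing these inequalities and invoking Lemma \ref{lemma:quant}, which after cancelling the common negative factor $\mathcal{S}(0)/\lambda_1^\alpha(0)$ reads $-\ddot{\lambda}_1^\alpha(0)/\ddot{\mathcal{S}}(0) \geq -\alpha u(R)^2/(n+1)$, I would conclude
\begin{align*}
\frac{\lambda_1^\alpha(0)-\lambda_1^\alpha(t)}{\mathcal{S}(t)-\mathcal{S}(0)} > -\frac{\ddot{\lambda}_1^\alpha(0)}{\ddot{\mathcal{S}}(0)} \cdot \frac{1-\varepsilon}{1+\varepsilon} \geq -\frac{\alpha u(R)^2}{n+1} \cdot \frac{1-\varepsilon}{1+\varepsilon},
\end{align*}
and multiplying through by the positive quantity $\mathcal{S}(t)-\mathcal{S}(0)$ then produces the claimed inequality.

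The main obstacle is careful sign bookkeeping, since $\ddot{\lambda}_1^\alpha(0)$, $\lambda_1^\alpha(0)$, and $\alpha$ are all negative while $\ddot{\mathcal{S}}(0)$, $\mathcal{S}(0)$, and $\mathcal{S}(t)-\mathcal{S}(0)$ are positive, so each remainder estimate has to be oriented correctly and Lemma \ref{lemma:quant} has to be rearranged with an inequality flip. Beyond this, the argument is an immediate consequence of the two second-order Taylor expansions combined with Lemma \ref{lemma:quant}; no further analysis of Bessel functions or of the Steklov spectrum is required at this stage.
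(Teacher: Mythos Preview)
Your proposal is correct and follows essentially the same approach as the paper: use the vanishing of the first variations, bound the Taylor remainders via $C_1$ and $C_2$, and then apply Lemma~\ref{lemma:quant} to control the ratio $-\ddot{\lambda}_1^\alpha(0)/\ddot{\mathcal{S}}(0)$. The paper organizes the computation through the auxiliary quantity $\delta(t)=\delta\lambda_1^\alpha(t)/\delta\mathcal{S}(t)$, whereas you bound numerator and denominator separately before dividing; the content is identical, and your version is in fact slightly more careful with the sign of $t$ (the paper writes $t^3 C_i$ where $|t|^3 C_i$ is meant). One minor point: your justification for $\varepsilon\geq 1$ appeals to Theorem~\ref{satz:lambda..ext} for positivity of the left-hand side, but that theorem only gives $\ddot{\lambda}_1^\alpha(0)<0$; the positivity for the relevant $t$-range follows instead from your own remainder estimate together with the isoperimetric fact $\mathcal{S}(t)>\mathcal{S}(0)$ noted just before the theorem.
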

\begin{proof}
Considering $\dot{\mathcal{S}}(0) = \dot{\lambda}_1^\alpha(0)=0$, we obtain
\begin{align*}
\delta \lambda_1^\alpha(t) &= \frac{\lambda_1^\alpha(t) - \lambda_1^\alpha(0)}{\lambda_1^\alpha(0)} \geq \frac{  t \dot{\lambda}_1^\alpha(0) + \frac{t^2}{2} \ddot{\lambda}_1^\alpha(0) + t^3 C_1}{\lambda_1^\alpha(0)} = \frac{   \frac{t^2}{2} \ddot{\lambda}_1^\alpha(0) + t^3 C_1}{\lambda_1^\alpha(0)} , \\
\delta \mathcal{S}(t) &=  \frac{\mathcal{S}(t)-\mathcal{S}(0)}{\mathcal{S}(0)} \leq \frac{  t \dot{\mathcal{S}}(0) + \frac{t^2}{2} \ddot{\mathcal{S}}(0) + t^3 C_2}{\lambda_1^\alpha(0)} = \frac{  \frac{t^2}{2} \ddot{\mathcal{S}}(0) + t^3 C_2}{\lambda_1^\alpha(0)} .
\end{align*}
This leads to
\begin{align*}
\delta(t) \geq \frac{\frac{t^2}{2} \frac{\ddot{\lambda}_1^\alpha(0)}{\lambda_1^\alpha(0)} + t^3 \frac{C_1}{\lambda_1^\alpha(0)}}{\frac{t^2}{2} \frac{\ddot{\mathcal{S}}(0)}{\mathcal{S}(0)} + t^3 \frac{C_2}{\mathcal{S}(0)}} = \frac{\mathcal{S}(0)}{-\lambda_1^\alpha(0)} \frac{- \ddot{\lambda}_1^\alpha(0) - 2t C_1}{ \ddot{\mathcal{S}}(0) + 2t C_2}.
\end{align*}
By choosing $|t| < \min \left\{ \frac{-  \varepsilon \ddot{\lambda}_1^\alpha(0)  }{2C_1} , \frac{  \varepsilon \ddot{\mathcal{S}}(0)  }{2C_2} \right\}$, and considering Lemma \ref{lemma:quant}, it holds
\begin{align*}
\delta(t) \geq \frac{\mathcal{S}(0)}{-\lambda_1^\alpha(0)} \frac{- \ddot{\lambda}_1^\alpha(0) +  \varepsilon \ddot{\lambda}_1^\alpha(0)  }{ \ddot{\mathcal{S}}(0) + \varepsilon \ddot{\mathcal{S}}(0)} = \frac{\mathcal{S}(0)\ddot{\lambda}_1^\alpha(0)}{\lambda_1^\alpha(0)\ddot{\mathcal{S}}(0)} \frac{ 1 -  \varepsilon   }{ 1 + \varepsilon } \geq \frac{\alpha u(R)^2}{n+1} \frac{\mathcal{S}(0)}{\lambda_1^\alpha(0)} \frac{ 1 -  \varepsilon   }{ 1 + \varepsilon }.
\end{align*}
Since $\delta(t) = - \frac{\mathcal{S}(0)}{\lambda_1^\alpha(0)} \frac{\lambda_1^\alpha(0)-\lambda_1^\alpha(t)}{\mathcal{S}(t)-\mathcal{S}(0)}$, we obtain
\begin{align*}
\frac{\lambda_1^\alpha(0)-\lambda_1^\alpha(t)}{\mathcal{S}(t)-\mathcal{S}(0)} > \frac{-\alpha u(R)^2}{n+1}   \frac{1- \varepsilon}{1+ \varepsilon}.
\end{align*}
\end{proof}

We cannot assure that the values of $C_1$ and $C_2$, and thus the interval for $t$, where the inequality holds true, can be chosen independently of the specific perturbation. Nonetheless, we expect that the condition \eqref{enum:ot1} can effectively address this problem.

\begin{appendices}

\section{Properties of Bessel Functions}\label{secA1}

The properties of the modified Bessel functions, presented in this section, are well known and can, for example, be found in \cite[Chapter 9]{abramowitz1968handbook}. For $m \in \R_{\geq 0}$, the modified Bessel functions $I_m(x)$ and $K_m(x)$ are defined as the canonical solutions of
\begin{align}\label{eq:PDEBessel}
x^2 \frac{\mathrm{d}^2f(x)}{\mathrm{d} x^2} + x \frac{\mathrm{d} f(x)}{\mathrm{d} x} - (x^2+m^2) f(x) = 0.
\end{align} 
They satisfy the recurrence relations
\begin{align}
K_{m+2}(x) &= \frac{(2m+2)K_{m+1}(x) }{x}+ K_m(x), \label{eq:RecK} \\
 I_{m+2}(x) &= \frac{-(2m+2)I_{m+1}(x) }{x}+ I_m(x).\label{eq:RecI}
\end{align}
Differentiating these functions with respect to $x$ yields
\begin{align}\label{eq:Bessediff}
\frac{\mathrm{d}}{\mathrm{d}x} K_m(x) = -K_{m+1}(x) + \frac{m K_m(x)}{x} \, \text{ and } \,
\frac{\mathrm{d}}{\mathrm{d}x} I_m(x) = I_{m+1}(x) + \frac{m I_m(x)}{x}. 
\end{align} 
The Bessel functions have the following asymptotic behavior. For $x \to 0$, it holds
\begin{align}
I_m(x) &\approx \frac{1}{\Gamma(m+1)} \left( \frac{x}{2} \right)^m, \label{eq:BesselIasymp0} \\
K_m(x) &\approx \begin{cases}
-\left( \ln\left(\frac{x}{2}\right) + \gamma \right)  & \text{ for } m=0, \\
\frac{\Gamma(m)}{2} \left( \frac{2}{x} \right)^m & \,  \text{ for } m>0,
\end{cases}\label{eq:BesselKasymp0}
\end{align}
where $\gamma$ denotes the Euler constant. For $x \to \infty$, we have the asymptotic behavior
\begin{align}
I_m(x) &= \frac{e^x}{\sqrt{2 \pi x}} \left( 1- \frac{4m^2-1}{8x} + \frac{(4m^2-1)(4m^2-9)}{2(8x)^2} + \mathcal{O}\left( \frac{1}{x^3} \right) \right),\label{eq:BesselIasympinf}\\
K_m(x) &= \sqrt{\frac{\pi}{2x}}e^{-x} \left( 1+ \frac{4m^2-1}{8x} + \frac{(4m^2-1)(4m^2-9)}{2(8x)^2} + \mathcal{O}\left( \frac{1}{x^3} \right) \right).\label{eq:BesselKasympinf}
\end{align}
A relation between $K_m$ and $I_m$ is given by
\begin{align}\label{eq:abel2}
-K_m(x) I_{m+1}(x) = -\frac{1}{x} + I_m(x) K_{m+1}(x).
\end{align}
A useful inequality for the ratio of Bessel functions is proven in \cite[Theorem 1, Theorem 5]{Segura} by J. Segura: For $x > 0, m \geq 0$ it holds that
\begin{align}\label{eq:segura}
\frac{m}{2} + \sqrt{\frac{m^2}{4}+x^2} \, \leq \, x \frac{K_{\frac{m}{2}+1}(x)}{K_{\frac{m}{2}}(x)} \, \leq \, \frac{m+1}{2} + \sqrt{\frac{(m+1)^2}{4}+x^2}.
\end{align}
For $m\neq0$, inequality \eqref{eq:segura} is strict.

\section{Variation of the Robin Eigenvalue}\label{secA2}
In this section, we partly omit, in favor of readability certain arguments of a function if it is obvious from the context.

\begin{proof}[\proofname\ of Theorem \ref{satz:dotlambdaext}]
The proof is similar to the calculations of the variation of the Robin eigenvalue on bounded domains given in \cite[Section 9.1.2 and Section 9.1.3]{Wagner1}. We use the normalization $|| u(t)||_{L^2(\Omega_t^\text{ext})}=1$, which implies
\begin{align}\label{eq:normierung}
0 = \frac{ \mathrm{d}}{\mathrm{d}t} \int_{\Omega^{\text{ext}}} \widetilde{u}^2(t) J(t) \, \mathrm{d}x = \int_{\Omega^{\text{ext}}} 2 \widetilde{u}(t) \dot{\widetilde{u}}(t) J(t) + \widetilde{u}^2(t) \dot{J}(t) \, \mathrm{d}x.
\end{align}
Therefore, it holds
\begin{align*}
\lambda_1^\alpha(t) &= \int_{\Omega^{\text{ext}}} \left( \nabla \widetilde{u}(t) \right)^T A(t) \nabla \widetilde{u}(t)  \, \mathrm{d}x + \alpha \int_{\partial \Omega} \widetilde{u}^2(t) m(t)\, \mathrm{d}S.
\end{align*}
If we differentiate this expression with respect to $t$, we obtain
\begin{align*}
\dot{\lambda}_1^\alpha(t) =& \int_{\Omega^{\text{ext}}} 2 \left( \nabla \dot{\widetilde{u}}(t) \right)^T A(t) \nabla \widetilde{u}(t) + \left( \nabla \widetilde{u}(t) \right)^T \dot{A}(t) \nabla \widetilde{u}(t)  \, \mathrm{d}x \\
&+\alpha \int_{\partial \Omega} 2 \dot{\widetilde{u}}(t) \widetilde{u}(t) m(t) + \widetilde{u}^2(t) \dot{m}(t)\, \mathrm{d}S.
\end{align*} 
In order to eliminate expressions containing $\dot{\widetilde{u}}$, we multiply \eqref{eq:problemtrafo} with $\dot{\widetilde{u}}$ and integrate over $\Omega^{\text{ext}}$, which yields
\begin{align*}
\lambda_1^\alpha(t) \int_{\Omega^{\text{ext}}} \dot{\widetilde{u}}(t) {\widetilde{u}}(t) J(t) \, \mathrm{d}x = - \int_{\Omega^{\text{ext}}} \dot{\widetilde{u}}(t) L_A {\widetilde{u}}(t) \, \mathrm{d}x.
\end{align*}
In view of $  L_A \widetilde{u} = \operatorname{div}(A \nabla \widetilde{u})$, we can use integration by parts and obtain
\begin{align*}
\lambda_1^\alpha(t) \int_{\Omega^{\text{ext}}} \dot{\widetilde{u}}(t) {\widetilde{u}}(t) J(t) \, \mathrm{d}x &= \int_{\partial \Omega} \dot{\widetilde{u}}(t) \langle A(t) \nabla {\widetilde{u}}(t), \nu \rangle\, \mathrm{d}S + \int_{\Omega^{\text{ext}}} \langle A(t) \nabla {\widetilde{u}}(t), \nabla \dot{\widetilde{u}}(t) \rangle \, \mathrm{d}x \\
&= \int_{\partial \Omega} \dot{\widetilde{u}}(t) \partial_{\nu_{A(t)}} {\widetilde{u}}(t) \, \mathrm{d}S + \int_{\Omega^{\text{ext}}} (\nabla \dot{\widetilde{u}}(t))^T A(t) \nabla {\widetilde{u}}(t) \, \mathrm{d}x \\
&= \alpha \int_{\partial \Omega} \dot{\widetilde{u}}(t)  m(t) \widetilde{u}(t)\, \mathrm{d}S + \int_{\Omega^{\text{ext}}} (\nabla \dot{\widetilde{u}}(t))^T A(t) \nabla {\widetilde{u}}(t) \, \mathrm{d}x,
\end{align*}
where we inserted the boundary condition in the last equality. Thus, we obtain
\begin{align*}
\dot{\lambda}_1^\alpha(t) =& \int_{\Omega^{\text{ext}}} \left( \nabla \widetilde{u}(t) \right)^T \dot{A}(t) \nabla \widetilde{u}(t) + 2 \lambda_1^\alpha(t)  \dot{\widetilde{u}}(t) {\widetilde{u}}(t) J(t)  \, \mathrm{d}x +\alpha \int_{\partial \Omega}  \widetilde{u}^2(t) \dot{m}(t)\, \mathrm{d}S .
\end{align*} 
Using the normalization \eqref{eq:normierung}, we obtain \eqref{eq:lambdadot1}.

For the second variation, we differentiate \eqref{eq:lambdadot1} with respect to $t$ and obtain
\begin{align*}
\ddot{\lambda}_1^\alpha(t) =& \int_{\Omega^{\text{ext}}} \left( \nabla \widetilde{u}(t) \right)^T \ddot{A}(t) \nabla \widetilde{u}(t) + 2 \left( \nabla \dot{\widetilde{u}}(t) \right)^T \dot{A}(t) \nabla \widetilde{u}(t)  \, \mathrm{d}x  \\
&+\alpha \int_{\partial \Omega}  \widetilde{u}^2(t) \ddot{m}(t) +2\widetilde{u}(t)\dot{\widetilde{u}}(t) \dot{m}(t)\, \mathrm{d}S \\
&- \dot{\lambda}_1^\alpha(t) \int_{\Omega^{\text{ext}}} {\widetilde{u}}^2(t)  \dot{J}(t) \, \mathrm{d}x - \lambda_1^\alpha(t) \int_{\Omega^{\text{ext}}} {\widetilde{u}}^2(t)  \ddot{J}(t) + 2 {\widetilde{u}}(t) \dot{\widetilde{u}}(t)  \dot{J}(t) \, \mathrm{d}x.
\end{align*} 
To simplify this expression, we differentiate \eqref{eq:problemtrafo} with respect to $t$, which leads to
\begin{align}
\begin{cases}
L_{\dot{A}(t)} \widetilde{u}(t) + L_{A(t)} \dot{\widetilde{u}}(t) + \dot{\lambda}_1^\alpha(t) \widetilde{u}(t) J(t) + \lambda_1\alpha(t) \left[ \dot{\widetilde{u}}(t) J(t) +  \widetilde{u}(t) \dot{J}(t) \right] = 0  & \, \text{ in } \Omega^{\text{ext}}, \\
-\partial_{\nu_{\dot{A}(t)}} \widetilde{u}(t) -\partial_{\nu_{A(t)}} \dot{\widetilde{u}}(t)  + \alpha \dot{m}(t) \widetilde{u}(t) + \alpha m(t) \dot{\widetilde{u}}(t) = 0 & \,  \text{ on } \partial \Omega.
\end{cases} \nonumber
\end{align}
Multiplying this equation with $\dot{\widetilde{u}}$ and integrating over $\Omega^{\text{ext}}$ yields
\begin{align*}
0=&\int_{\Omega^{\text{ext}}} \dot{\widetilde{u}}(t)  \operatorname{div}(\dot{A}(t) \nabla \widetilde{u}(t)) +\dot{\widetilde{u}}(t) \operatorname{div}(A(t) \nabla \dot{\widetilde{u}}(t) ) \, \mathrm{d}x \\
&+ \int_{\Omega^{\text{ext}}} \dot{\widetilde{u}}(t) \dot{\lambda}_1^\alpha(t) \widetilde{u}(t) J(t) + \dot{\widetilde{u}}(t) \lambda_1^\alpha(t) \left[ \dot{\widetilde{u}}(t) J(t) +  \widetilde{u}(t) \dot{J}(t) \right]  \, \mathrm{d}x.
\end{align*}
After integration by parts and using the boundary condition, we obtain
\begin{align*}
0=&  -\int_{\partial \Omega} \dot{\widetilde{u}}(t)  \left[ \alpha \dot{m}(t) \widetilde{u}(t) + \alpha m(t) \dot{\widetilde{u}}(t) \right]\, \mathrm{d}S \\
&- \int_{\Omega^{\text{ext}}} ( \nabla \dot{\widetilde{u}}(t))^T \dot{A}(t) \nabla {\widetilde{u}}(t) + (\nabla \dot{\widetilde{u}}(t))^T {A}(t) \nabla \dot{\widetilde{u}}(t)  \, \mathrm{d}x  \\
&+ \int_{\Omega^{\text{ext}}} \dot{\widetilde{u}}(t) \dot{\lambda}_1^\alpha(t) \widetilde{u}(t) J(t) + \dot{\widetilde{u}}(t) \lambda_1^\alpha(t) \left[ \dot{\widetilde{u}}(t) J(t) +  \widetilde{u}(t) \dot{J}(t) \right]  \, \mathrm{d}x.
\end{align*}
Finally, adding this two times to $\ddot{\lambda}_1^\alpha$ yields the claimed formula for the second variation.
\end{proof}

\begin{proof}[\proofname\ of Corollary \ref{koro:firstvarlambdat0}]
From Theorem \ref{satz:dotlambdaext}, we obtain
\begin{align*}
\dot{\lambda}_1^\alpha(0) =& \int_{\Omega^{\text{ext}}} \left( \nabla u \right)^T \dot{A}(0) \nabla u - \lambda_1^\alpha(0)  u^2  \dot{J}(0) \,  \mathrm{d}x +\alpha \int_{\partial \Omega}  u^2 \dot{m}(0)\,  \mathrm{d}S.
\end{align*}
It holds $\dot{J}(0) = \operatorname{div}(v)$ and from \cite[(2.3.19)]{Wagner1} we conclude for Hadamard perturbations $\dot{m}(0) = (n-1) H \langle v, -\nu \rangle$. Furthermore, in \cite[Lemma 4.1]{Wagner1} it is shown that $\dot{A}_{i,j}(0) = \operatorname{div}(v) \delta_{i,j} - \partial_j v_i - \partial_i v_j$, where $\partial_i$ is short for $\partial_{x_i}$. Hence, it holds
\begin{align*}
& \int_{\Omega^{\text{ext}}} \left( \nabla u \right)^T \dot{A}(0) \nabla u  \, \mathrm{d}x -2  \int_{\Omega^{\text{ext}}}   \Delta u \langle \nabla u, v \rangle \, \mathrm{d}x \\
=&  \int_{\Omega^{\text{ext}}} \operatorname{div}(v) | \nabla u|^2   \, \mathrm{d}x - 2 \int_{\Omega^{\text{ext}}}  \sum_{i,j=1}^n (\partial_i u)   (\partial_j u) (\partial_i v_j)  \, \mathrm{d}x -2  \int_{\Omega^{\text{ext}}}  \Delta u \langle \nabla u, v \rangle \, \mathrm{d}x \\
=&  \int_{\Omega^{\text{ext}}} \operatorname{div}(v) | \nabla u|^2 + \langle v, \nabla ( | \nabla u|^2) \rangle \, \mathrm{d}x  -2  \int_{\Omega^{\text{ext}}} \langle \nabla u, \nabla (\langle \nabla u, v \rangle) \rangle +\Delta u \langle \nabla u, v \rangle \, \mathrm{d}x\\
=& - \int_{\partial \Omega} \langle v, \nu \rangle | \nabla u|^2 - 2\partial_\nu u \langle \nabla u, v \rangle\,  \mathrm{d}S,
\end{align*}
where we used integration by parts in the last step. Therefore, using $\Delta u = - \lambda_1^\alpha(0) u$ in $\Omega^\text{ext}$, we obtain
\begin{align*}
\dot{\lambda}_1^\alpha(0)=& -\int_{\partial \Omega} \langle v, \nu \rangle | \nabla u|^2 - 2 (\partial_\nu u) \langle \nabla u, v \rangle +\alpha u^2 (n-1) H \langle v, \nu \rangle\,  \mathrm{d}S \\
&+ 2 \int_{\Omega^{\text{ext}}} \Delta u \langle \nabla u, v \rangle \, \mathrm{d}x - \lambda_1^\alpha(0) \int_{\Omega^{\text{ext}}}   u^2  \operatorname{div}(v) \, \mathrm{d}x \\
=& -\int_{\partial \Omega} \langle v, \nu \rangle | \nabla u|^2 - 2 (\partial_\nu u) \langle \nabla u, v \rangle +\alpha u^2 (n-1) H \langle v, \nu \rangle\,  \mathrm{d}S \\
&- 2 \lambda_1^\alpha(0) \int_{\Omega^{\text{ext}}}  u \langle \nabla u, v \rangle \, \mathrm{d}x - \lambda_1^\alpha(0) \int_{\Omega^{\text{ext}}}   u^2  \operatorname{div}(v) \, \mathrm{d}x.
\end{align*}
Using again integration by parts, the last integral can be transformed to
\begin{align*}
&-  \int_{\Omega^{\text{ext}}}   u^2  \operatorname{div}(v) \, \mathrm{d}x = \int_{\Omega^{\text{ext}}}   \langle v, \underbrace{\nabla u^2}_{=2 u \nabla u } \rangle \, \mathrm{d}x +  \int_{\partial \Omega} u^2 \langle v, \nu \rangle \,  \mathrm{d}S.
\end{align*}
Under the assumption $v = \langle v, \nu \rangle \nu$ and in view of the boundary condition, it holds that $(\partial_\nu u) \langle \nabla u, v \rangle  = \alpha^2 u^2\langle v,\nu \rangle$. Hence, \eqref{eq:firstvarlambdat0} follows.
\end{proof}

\begin{proof}[\proofname\ of \eqref{eq:ddotlambda0}]
Inserting $t=0$ into Theorem \ref{satz:dotlambdaext} and using $A(0) = I_{n \times n}$ as well as $m(0) = J(0) =1$ yields
\begin{align}
\ddot{\lambda}_1^\alpha(0) =&\int_{B_R^{\text{ext}}} \left( \nabla u \right)^T \ddot{A}(0) \nabla u    -2  |\nabla \dot{\widetilde{u}}(0))|^2  \, \mathrm{d}x \nonumber +\alpha \int_{\partial B_R}  u^2 \ddot{m}(0)   -2     \dot{\widetilde{u}}(0)^2 \, \mathrm{d}S \nonumber \\
&- \underbrace{\dot{\lambda}_1^\alpha(0)}_{=0} \int_{B_R^{\text{ext}}} u^2  \dot{J}(0) - \dot{\widetilde{u}}(0)  u  \, \mathrm{d}x - \lambda_1^\alpha(0) \int_{B_R^{\text{ext}}} u^2  \ddot{J}(0) - 2\dot{\widetilde{u}}(0)^2  \, \mathrm{d}x. \nonumber
\end{align}
Using \eqref{eq:defshape}, i.e. $\dot{\widetilde{u}}(0) = \langle v, \nabla u \rangle + u'$, we obtain
\begin{align*}
\ddot{\lambda}_1^\alpha(0) =&\int_{B_R^{\text{ext}}} \left( \nabla u \right)^T \ddot{A}(0) \nabla u    -2  |\nabla (\langle v, \nabla u \rangle + u')|^2  \, \mathrm{d}x \nonumber \\
&+\alpha \int_{\partial B_R}  u^2 \ddot{m}(0)   -2     (\langle v, \nabla u \rangle + u')^2 \, \mathrm{d}S \\
&- \lambda_1^\alpha(0) \int_{B_R^{\text{ext}}} u^2  \ddot{J}(0) - 2(\langle v, \nabla u \rangle + u')^2  \, \mathrm{d}x. \nonumber
\end{align*}
With $\mathcal{Q}(u')$ as in \eqref{eq:DefQ} and using $\ddot{\mathcal{S}}(0) = \int_{\partial \Omega} \ddot{m}(0)\, \mathrm{d}S$, the second variation becomes
\begin{align}\label{eq:lambdaddotnullkugel}
\ddot{\lambda}_1^\alpha(0) =&\int_{B_R^{\text{ext}}} \left( \nabla u \right)^T \ddot{A}(0) \nabla u    -2  |\nabla (\langle v, \nabla u \rangle)|^2 -4 \langle \nabla (\langle v, \nabla u \rangle), \nabla u' \rangle  \, \mathrm{d}x \nonumber \\
&+\alpha u^2(R) \ddot{S}(0) -2 \alpha \int_{\partial B_R}  \langle v, \nabla u \rangle^2 +2\langle v, \nabla u \rangle u'   \, \mathrm{d}S \nonumber \\
&- \lambda_1^\alpha(0) \int_{B_R^{\text{ext}}} u^2  \ddot{J}(0) - 2(\langle v, \nabla u \rangle^2 +2\langle v, \nabla u \rangle u' )  \, \mathrm{d}x  -2\mathcal{Q}(u').
\end{align}
We transform the remaining domain integrals into boundary integrals. To this end, we proceed similar to \cite[Section 6.5.1]{Wagner1}. By \cite[Lemma 4.1]{Wagner1}, it holds
\begin{align*}
\ddot{A}_{i,j}(0) =& \left[ (\operatorname{div}(v) )^2 - \operatorname{Tr}\left( D_v (D_v)^T \right) + \operatorname{div}(w) \right] \delta_{i,j} -2  \operatorname{div}(v)  \left[ (\partial_j v_i) +(\partial_i v_j) \right] \\
&+2 \sum_{k=1}^n (\partial_k v_i) (\partial_j v_k) + (\partial_k v_j) (\partial_i v_k) + (\partial_k v_i) (\partial_k v_j) - (\partial_i w_j) - (\partial_j w_i).
\end{align*}
Therefore, it is straight forward to verify that
\begin{align*}
\left( \nabla u \right)^T \ddot{A}(0) \nabla u  =&\left[ (\operatorname{div}(v) )^2 - \operatorname{Tr}\left( D_v (D_v)^T \right) + \operatorname{div}(w) \right] | \nabla u|^2 -4  \operatorname{div}(v)  \langle \nabla u, D_v \nabla u \rangle  \\
&+4  \langle  (D_v)^T \nabla u, D_v \nabla u \rangle+  2 \langle D_v \nabla u, D_v \nabla u \rangle -2 \langle \nabla u, D_w \nabla u \rangle.
\end{align*}
Additionally, it holds
\begin{align*}
|\nabla (\langle v, \nabla u \rangle)|^2 &= \langle D_v \nabla u, D_v \nabla u \rangle + 2 \langle D_v \nabla u, (D^2u) v \rangle + \langle (D^2u) v,(D^2u) v \rangle,
\end{align*}
where $D^2u$ is the matrix given by $(D^2u)_{i,j} = \partial_i \partial_j u$. Furthermore, it holds
\begin{align*}
&\langle \nabla (\langle v, \nabla u \rangle), \nabla u' \rangle = \operatorname{div}(\langle v, \nabla u \rangle \nabla u') - \langle v, \nabla u \rangle \Delta u'.
\end{align*}
Thus, we obtain
\begin{align*}
&\left( \nabla u \right)^T \ddot{A}(0) \nabla u    -2  |\nabla (\langle v, \nabla u \rangle)|^2 -4 \langle \nabla (\langle v, \nabla u \rangle), \nabla u' \rangle  \\
=&\left[ (\operatorname{div}(v) )^2 - \operatorname{Tr}\left( D_v (D_v)^T \right) + \operatorname{div}(w) \right] | \nabla u|^2 -4  \operatorname{div}(v)  \langle \nabla u, D_v \nabla u \rangle  \\
&+4  \langle  (D_v)^T \nabla u, D_v \nabla u \rangle+  2 \langle D_v \nabla u, D_v \nabla u \rangle -2 \langle \nabla u, D_w \nabla u \rangle \\
&- 2 \left[ \langle D_v \nabla u, D_v \nabla u \rangle + 2 \langle D_v \nabla u, (D^2u) v \rangle + \langle (D^2u) v,(D^2u) v \rangle  \right] \\
&- 4 \left[ \operatorname{div}(\langle v, \nabla u \rangle \nabla u') - \langle v, \nabla u \rangle \Delta u' \right] \\
=& \left[ (\operatorname{div}(v) )^2 - \operatorname{Tr}\left( D_v (D_v)^T \right)  \right] | \nabla u|^2 -2 \langle \nabla u, D_w \nabla u \rangle  + \operatorname{div}(w)| \nabla u|^2   \\
&-4  \operatorname{div}(v)  \langle \nabla u, D_v \nabla u \rangle +4  \langle  (D_v)^T \nabla u, D_v \nabla u \rangle \\
&- 4 \langle D_v \nabla u, (D^2u) v \rangle -2 \langle (D^2u) v,(D^2u) v \rangle  \\
&- 4  \operatorname{div}(\langle v, \nabla u \rangle \nabla u') +4 \langle v, \nabla u \rangle \Delta u' .
\end{align*}
We examine the terms separately. To this end, we define
\begin{align*}
E_1:=& \left[ (\operatorname{div}(v) )^2 - \operatorname{Tr}\left( D_v (D_v)^T \right)  \right] | \nabla u|^2,\\
E_2:=& -2 \langle \nabla u, D_w \nabla u \rangle  + \operatorname{div}(w)| \nabla u|^2,\\
E_3:=&-4  \operatorname{div}(v)  \langle \nabla u, D_v \nabla u \rangle +4  \langle  (D_v)^T \nabla u, D_v \nabla u \rangle \\
&- 4 \langle D_v \nabla u, (D^2u) v \rangle -2 \langle (D^2u) v,(D^2u) v \rangle,\\
E_4:=&- 4  \operatorname{div}(\langle v, \nabla u \rangle \nabla u') +4 \langle v, \nabla u \rangle \Delta u'.
\end{align*}
A straightforward calculation shows that
first, it holds that
\begin{align*}
E_1  =& \operatorname{div} \left( [v\operatorname{div}(v) - (D_v)^T v] |\nabla u|^2  \right) -2 \operatorname{div}(v) \langle v, D^2u \nabla u \rangle +2 \langle (D_v)^T v, D^2u \nabla u \rangle, \\
E_2 =&\operatorname{div} \left( w | \nabla u|^2 -2 \nabla u \langle w, \nabla u \rangle \right) +2\Delta u \langle w, \nabla u \rangle,
\end{align*}
and for $E_3$, we use the identity
\begin{align*}
&-\operatorname{div}(v)  \langle \nabla u, D_v \nabla u \rangle  -  \langle D_v \nabla u, (D^2u) v =\operatorname{div} \left[ \nabla u \langle v, D_v \nabla u \rangle - v \langle \nabla u, D_v \nabla u \rangle \right] \\
& +\langle (D^2u) v, (D_v)^T \nabla u \rangle -\langle (D_v)^T \nabla u, D_v \nabla u \rangle -  \Delta u \langle v, D_v \nabla u \rangle - \langle (D_v)^T v, (D^2u) \nabla u \rangle.
\end{align*}
Hence, $E_1 + E_3$ equals
\begin{align*}
&\operatorname{div} \left( [v\operatorname{div}(v) - (D_v)^T v] |\nabla u|^2  \right) +4  \operatorname{div} \left[ \nabla u \langle v, D_v \nabla u \rangle - v \langle \nabla u, D_v \nabla u \rangle \right] \\
&- 4 \Delta u \langle v, D_v \nabla u \rangle +2 \operatorname{div} \left[ \nabla u \langle (D^2u)v,v \rangle \right] -2 \Delta u \langle v, (D^2u)v \rangle  \\
&-2 \operatorname{div} \left[ v \langle (D^2 u) v, \nabla u \rangle \right].
\end{align*}
In total, we obtain that $E_1+E_2+E_3+E_4$ equals
\begin{align*}
&\operatorname{div} \left[ [v\operatorname{div}(v) - (D_v)^T v] |\nabla u|^2 +4   \left( \nabla u \langle v, D_v \nabla u \rangle - v \langle \nabla u, D_v \nabla u \rangle \right) \right]  \\
&+2 \operatorname{div} \left[ - v \langle (D^2 u) v, \nabla u \rangle + \nabla u \langle (D^2u)v,v \rangle -2\left[ \langle v, \nabla u \rangle \nabla u' \right]\right]  \\
&+\operatorname{div} \left[ w | \nabla u|^2 -2 \nabla u \langle w, \nabla u \rangle \right] \\
&  +2\Delta u \langle w, \nabla u \rangle +4 \langle v, \nabla u \rangle \Delta u' -2 \Delta u \langle v, (D^2u)v \rangle- 4 \Delta u \langle v, D_v \nabla u \rangle.
\end{align*}
Using integration by parts, we obtain that $\int_{B_R^{\text{ext}}} E_1+E_2+E_3+E_4 \, \mathrm{d}x$ equals
\begin{align*}
& - \int_{ \partial B_R} |\nabla u|^2 \langle v \operatorname{div}(v) - (D_v)^T v , \nu \rangle +4\langle v, D_v \nabla u \rangle \langle \nabla u , \nu \rangle - 4 \langle \nabla u, D_v \nabla u \rangle \langle v , \nu \rangle\, \mathrm{d}S \\
&-2\int_{ \partial B_R} \langle (D^2 u) v, \nabla u \rangle \langle - v  , \nu \rangle + \langle (D^2u)v,v \rangle  \langle \nabla u , \nu \rangle -2 \langle v, \nabla u \rangle \langle  \nabla u' , \nu \rangle\, \mathrm{d}S \\
&-\int_{ \partial B_R} | \nabla u|^2 \langle w , \nu \rangle  -2 \langle w, \nabla u \rangle \langle\nabla u , \nu \rangle\, \mathrm{d}S\\
&+2\int_{B_R^{\text{ext}}}\Delta u \langle w, \nabla u \rangle +2 \langle v, \nabla u \rangle \Delta u' - \Delta u \langle v, (D^2u)v \rangle- 2 \Delta u \langle v, D_v \nabla u \rangle  \, \mathrm{d} x .
\end{align*}
Since $u$ is radial, it holds $\nabla u = (\partial_\nu u) \nu$. Thus, $\langle v, D_v \nabla u \rangle \partial_\nu u -  \langle \nabla u, D_v \nabla u \rangle \langle v , \nu \rangle = 0$ and we obtain
\begin{align*}
&\int_{B_R^{\text{ext}}} E_1+E_2+E_3+E_4 \, \mathrm{d}x \\
=& - \int_{ \partial B_R} |\nabla u|^2 \langle v \operatorname{div}(v) - (D_v)^T v , \nu \rangle\, \mathrm{d}S \\
&-2\int_{ \partial B_R} \langle (D^2 u) v, \nabla u \rangle \langle - v  , \nu \rangle + \langle (D^2u)v,v \rangle  \langle \nabla u , \nu \rangle -2 \langle v, \nabla u \rangle \langle  \nabla u' , \nu \rangle\, \mathrm{d}S \\
&-\int_{ \partial B_R} | \nabla u|^2 \langle w , \nu \rangle  -2 \langle w, \nabla u \rangle \langle\nabla u , \nu \rangle\, \mathrm{d}S\\
&+2\int_{B_R^{\text{ext}}}\Delta u \langle w, \nabla u \rangle +2 \langle v, \nabla u \rangle \Delta u' - \Delta u \langle v, (D^2u)v \rangle- 2 \Delta u \langle v, D_v \nabla u \rangle \, \mathrm{d}x.
\end{align*}
For the last integral in \eqref{eq:lambdaddotnullkugel}, we use $\ddot{J}(0) = \operatorname{div} \left[ v \operatorname{div}(v) - (D_v)^T v + w \right]$ and 
\begin{align*}
\operatorname{div} \left[ v u \langle \nabla u, v \rangle \right] - \langle \nabla u,v \rangle^2 -u \langle (D^2u)v,v \rangle -u \langle v, D_v \nabla u \rangle= u \langle \nabla u, v \rangle \operatorname{div}(v).
\end{align*}
Therefore, we obtain
\begin{align*}
u^2 \ddot{J}(0) =& u^2 \operatorname{div} \left[ v \operatorname{div}(v) - (D_v)^T v + w \right] \\
=&\operatorname{div} \left[ u^2 (v \operatorname{div}(v) - (D_v)^T v  + w )  \right] -2u \left[ \langle \nabla u, v \operatorname{div}(v) -(D_v)^T v +w \rangle   \right] \\
=&\operatorname{div} \left[ u^2 (v \operatorname{div}(v) - (D_v)^T v  + w )  \right] -2u \left[ \langle \nabla u,  -(D_v)^T v +w \rangle   \right] \\
&-2 \left[ \operatorname{div} \left[ v u \langle \nabla u, v \rangle \right] - \langle \nabla u,v \rangle^2 -u \langle (D^2u)v,v \rangle -u \langle v, D_v \nabla u \rangle \right].
\end{align*}
Thus, it follows
\begin{align*}
&\int_{B_R^{\text{ext}}} u^2  \ddot{J}(0) - 2(\langle v, \nabla u \rangle^2 +2\langle v, \nabla u \rangle u' )  \, \mathrm{d}x \\
=& -\int_{ \partial B_R}  u^2 \langle   v \operatorname{div}(v) - (D_v)^T v  + w  , \nu \rangle\, \mathrm{d}S -2 \int_{B_R^{\text{ext}}}  u \langle \nabla u, v \rangle \operatorname{div}(v) \, \mathrm{d}x \\
&-2\int_{B_R^{\text{ext}}}u  \langle \nabla u,  -(D_v)^T v +w \rangle +  \langle v, \nabla u \rangle^2 +2\langle v, \nabla u \rangle u' \, \mathrm{d}x   \\
=& -\int_{ \partial B_R}  u^2 \langle   v \operatorname{div}(v) - (D_v)^T v  + w  , \nu \rangle\, \mathrm{d}S  +2 \int_{ \partial B_R}  u \langle \nabla u, v \rangle \langle v, \nu \rangle\, \mathrm{d}S \\
&+2 \int_{B_R^{\text{ext}}} \langle v, \nabla \left( u \langle \nabla u, v \rangle \right) \, \mathrm{d}x  \\
&-2\int_{B_R^{\text{ext}}}u  \langle \nabla u,  -(D_v)^T v +w \rangle +  \langle v, \nabla u \rangle^2 +2\langle v, \nabla u \rangle u' \, \mathrm{d}x.
\end{align*}
Together with
\begin{align*}
\langle v, \nabla \left( u \langle \nabla u, v \rangle \right) \rangle &=  \langle \nabla u, v \rangle^2 + u \langle (D^2 u) v, v \rangle + u \langle v, D_v \nabla u \rangle,
\end{align*}
we obtain
\begin{align*}
&\int_{B_R^{\text{ext}}} u^2  \ddot{J}(0) - 2(\langle v, \nabla u \rangle^2 +2\langle v, \nabla u \rangle u' )  \, \mathrm{d}x\\
=&-\int_{ \partial B_R}  u^2 \langle   v \operatorname{div}(v) - (D_v)^T v  + w  , \nu \rangle  -2  u \langle \nabla u, v \rangle \langle v, \nu \rangle\, \mathrm{d}S \\
&+2 \int_{B_R^{\text{ext}}}   u \langle (D^2 u) v, v \rangle + u \langle v, D_v \nabla u \rangle  - \left[ u  \langle \nabla u,  -(D_v)^T v +w \rangle   +2\langle v, \nabla u \rangle u' \right] \, \mathrm{d}x \\
=&-\int_{ \partial B_R}  u^2 \langle   v \operatorname{div}(v) - (D_v)^T v  + w  , \nu \rangle  -2  u \langle \nabla u, v \rangle \langle v, \nu \rangle\, \mathrm{d}S \\
&+2 \int_{B_R^{\text{ext}}}   u \langle (D^2 u) v, v \rangle + 2 u  \langle v, D_v \nabla u \rangle \, \mathrm{d}x  -2\int_{B_R^{\text{ext}}}u  \langle \nabla u,  w \rangle   +2\langle v, \nabla u \rangle u' \, \mathrm{d}x.
\end{align*}
Inserting this into \eqref{eq:lambdaddotnullkugel} and using $\Delta u = - \lambda_1^\alpha u$ yields
\begin{align*}
\ddot{\lambda}_1^\alpha(0)  =&- \int_{ \partial B_R} |\nabla u|^2 \left[ \operatorname{div}(v) \langle v , \nu \rangle -\langle   v , (D_v) \nu \rangle +  \langle w , \nu \rangle \right]\, \mathrm{d}S \\
&-2\int_{ \partial B_R} \langle (D^2 u) v, \nabla u \rangle \langle - v  , \nu \rangle + \langle (D^2u)v,v \rangle  \partial_\nu u -2 \langle v, \nabla u \rangle \langle  \nabla u' , \nu \rangle\, \mathrm{d}S \\
&+2\int_{ \partial B_R}    \langle w, \nabla u \rangle \partial_\nu u\, \mathrm{d}S  +\alpha u^2(R) \ddot{S}(0) -2 \alpha \int_{\partial B_R}  \langle v, \nabla u \rangle^2 +2\langle v, \nabla u \rangle u'   \, \mathrm{d}S  \\
&+ \lambda_1^\alpha(0)  \int_{ \partial B_R}  u^2 \langle   v \operatorname{div}(v) - (D_v)^T v  + w  , \nu \rangle  -2  u \langle \nabla u, v \rangle \langle v, \nu \rangle\, \mathrm{d}S    \\
&+2 \lambda_1^\alpha(0)   \int_{B_R^{\text{ext}}} 2\langle v, \nabla u \rangle u' \, \mathrm{d}x   +2\int_{B_R^{\text{ext}}} 2 \langle v, \nabla u \rangle \Delta u'  \, \mathrm{d}x -2\mathcal{Q}(u')\\
=&- \int_{ \partial B_R} \left[ |\nabla u|^2  - \lambda_1^\alpha(0) u^2 \right] \left[ \operatorname{div}(v) \langle v , \nu \rangle -\langle   v , (D_v) \nu \rangle +  \langle w , \nu \rangle \right]\, \mathrm{d}S \\
&-2\int_{ \partial B_R} - \langle (D^2 u) v, \nabla u \rangle \langle  v  , \nu \rangle + \langle (D^2u)v,v \rangle  \partial_\nu u -2 \langle v, \nabla u \rangle \langle  \nabla u' , \nu \rangle\, \mathrm{d}S \\
&+2\int_{ \partial B_R}    \langle w, \nabla u \rangle \partial_\nu u\, \mathrm{d}S  +\alpha u^2(R) \ddot{S}(0) -2 \alpha \int_{\partial B_R}  \langle v, \nabla u \rangle^2 +2\langle v, \nabla u \rangle u'   \, \mathrm{d}S  \\
&-\lambda_1^\alpha(0)  \int_{ \partial B_R} 2  u \langle \nabla u, v \rangle \langle v, \nu \rangle\, \mathrm{d}S    \\
&+2 \lambda_1^\alpha(0)   \int_{B_R^{\text{ext}}} 2\langle v, \nabla u \rangle u' \, \mathrm{d}x   +2\int_{B_R^{\text{ext}}} 2 \langle v, \nabla u \rangle \Delta u'  \, \mathrm{d}x -2\mathcal{Q}(u') .
\end{align*}
Using
\begin{align*}
 &\langle (D^2u)v,v \rangle  \partial_\nu u  - \langle (D^2 u) v, \nabla u \rangle \langle  v  , \nu \rangle \\
 =&  \langle (D^2u)v, \langle v, \nu \rangle \nu \rangle  \partial_\nu u  - \langle (D^2 u) v, (\partial_\nu u) \nu \rangle \langle  v  , \nu \rangle = 0,
\end{align*}
the second variation becomes
\begin{align*}
\ddot{\lambda}_1^\alpha(0)  =&- \int_{ \partial B_R} \left[ |\nabla u|^2  - \lambda_1^\alpha(0) u^2 \right] \left[ \operatorname{div}(v) \langle v , \nu \rangle -\langle   v , (D_v) \nu \rangle +  \langle w , \nu \rangle \right]\, \mathrm{d}S \\
&- 2 \lambda_1^\alpha(0)  \int_{ \partial B_R}   u \langle \nabla u, v \rangle \langle v, \nu \rangle\, \mathrm{d}S +4 \int_{\partial B_R}  \langle v, \nabla u \rangle \left[ \partial_\nu u' - \alpha u' \right]\, \mathrm{d}S  \\
&-2 \alpha \int_{\partial B_R}  \langle v, \nabla u \rangle^2    \, \mathrm{d}S  +2\int_{ \partial B_R}    \langle w, \nabla u \rangle \partial_\nu u\, \mathrm{d}S +\alpha u^2(R) \ddot{S}(0)  \\
&+4  \int_{B_R^{\text{ext}}} \langle v, \nabla u \rangle \left[ \lambda_1^\alpha(0) u' +  \Delta u' \right] \, \mathrm{d}x   -2\mathcal{Q}(u').
\end{align*}
In view of Lemma \ref{lemma:u'Kugel}, $u'$ satisfies
\begin{align*}
\begin{cases}
\Delta u' + \lambda_1^\alpha u' = 0   &\text{ in } B_R^{\text{ext}} ,\\
-\partial_\nu u' + \alpha u' = -K u(R) \langle v, \nu \rangle &\text{ on } \partial B_R.
\end{cases}
\end{align*}
Hence, we obtain
\begin{align*}
\ddot{\lambda}_1^\alpha(0)  =&- \int_{ \partial B_R} \left[ |\nabla u|^2  - \lambda_1^\alpha(0) u^2 \right] \left[ \operatorname{div}(v) \langle v , \nu \rangle -\langle   v , (D_v) \nu \rangle +  \langle w , \nu \rangle \right]\, \mathrm{d}S \\
&- 2 \lambda_1^\alpha(0)  \int_{ \partial B_R}   u \langle \nabla u, v \rangle \langle v, \nu \rangle\, \mathrm{d}S +4 K \int_{\partial B_R}  \langle v, \nabla u \rangle   \langle v, \nu \rangle   u \, \mathrm{d}S  \\
&-2 \alpha \int_{\partial B_R}  \langle v, \nabla u \rangle^2    \, \mathrm{d}S  +2\int_{ \partial B_R}    \langle w, \nabla u \rangle \partial_\nu u\, \mathrm{d}S +\alpha u^2(R) \ddot{S}(0)  -2\mathcal{Q}(u').
\end{align*}
Using that $u$ and $ \partial_\nu u$ are constant on $\partial B_R$ as well as $\partial_\nu u = \alpha u$, we have
\begin{align*}
\ddot{\lambda}_1^\alpha(0)  =&- \left[ \alpha^2 u^2(R)  - \lambda_1^\alpha(0) u^2 \right] { \int_{ \partial B_R}   \operatorname{div}(v) \langle v , \nu \rangle -\langle   v , (D_v) \nu \rangle +  \langle w , \nu \rangle \, \mathrm{d}S} \\
&- 2 \lambda_1^\alpha(0) \alpha u^2(R) \int_{ \partial B_R}    \langle v, \nu \rangle^2  \, \mathrm{d}S +4 K \alpha u^2(R)  \int_{\partial B_R}   \langle v, \nu \rangle^2 \, \mathrm{d}S  \\
&-2 \alpha^3 u^2(R)  \int_{\partial B_R}  \langle v, \nu \rangle^2     \, \mathrm{d}S  +2 \alpha^2 u^2(R) \int_{ \partial B_R}    \langle w,  \nu \rangle \, \mathrm{d}S +\alpha u^2(R) \ddot{S}(0)  -2\mathcal{Q}(u').
\end{align*}
The first integral vanishes because of
\begin{align*}
\int_{ \partial B_R}   \operatorname{div}(v) \langle v , \nu \rangle -\langle   v , (D_v) \nu \rangle +  \langle w , \nu \rangle \, \mathrm{d}S =&- \int_{  B_R^{ext}} \ddot{J}(0) \, \mathrm{d}x  = \ddot{V}(0) =  0.
\end{align*}
Thus, we obtain
 \begin{align*}
\ddot{\lambda}_1^\alpha(0)  =&2 u^2(R) \alpha \left[-  \lambda_1^\alpha(0) +2K-\alpha^2 \right] \int_{ \partial B_R}    \langle v, \nu \rangle^2  \, \mathrm{d}S \\
&+ 2 \alpha^2 u^2(R) \int_{ \partial B_R}    \langle w,  \nu \rangle \, \mathrm{d}S +\alpha u^2(R) \ddot{S}(0)  -2\mathcal{Q}(u').
\end{align*}
In \cite[(5.56)]{Henrot}, A. Henrot and M. Pierre show that for $f \in \mathcal{C}^1(\partial \Omega)$, it holds 
\begin{align*}
\int_{\partial B_R} f \operatorname{div}_{\partial \Omega} (v)\, \mathrm{d}S = \int_{\partial B_R} - \langle v, \nabla^\tau f \rangle +(n-1) f \langle v, \nu \rangle H \, \mathrm{d}S,
\end{align*}
where $\operatorname{div}_{\partial \Omega} (v) := \left[ \operatorname{div}(v) - \langle (D_v) \nu, \nu \rangle \right]$ and $\nabla^\tau f := \nabla f - \langle \nabla f, \nu \rangle \nu $. Since we consider a Hadamard perturbation, it holds
\begin{align*}
\langle v, \nabla^\tau f \rangle = \langle v, \nabla f \rangle - \langle v, \langle \nabla f, \nu \rangle \nu \rangle = \langle \langle v, \nu \rangle \nu, \nabla f \rangle - \langle v, \langle \nabla f, \nu \rangle \nu \rangle = 0.
\end{align*}
Since $\nu$ is the outer normal of $B_R$, we have $H= \frac{1}{R}$. Thus, for $f = \langle v, \nu \rangle$, we obtain,
\begin{align}\label{eq:partinttang}
\int_{\partial B_R} \langle v, \nu \rangle \operatorname{div}_{\partial \Omega} (v)\, \mathrm{d}S = \int_{\partial B_R} (n-1) \langle v, \nu \rangle^2 \frac{1}{R}\, \mathrm{d}S.
\end{align}
Because of $\ddot{V}(0) = 0$, it holds that $0 = \int_{\partial B_R}  \operatorname{div}(v) \langle v , \nu \rangle -\langle   v , (D_v) \nu \rangle +  \langle w , \nu \rangle \, \mathrm{d}S$. Hence, we obtain
\begin{align*}
\int_{ \partial B_R}    \langle w,  \nu \rangle \, \mathrm{d}S &= - \int_{ \partial B_R}     \operatorname{div}(v) \langle v , \nu \rangle -\langle   v , (D_v) \nu \rangle \, \mathrm{d}S \\
&= - \int_{ \partial B_R}    \left[ \operatorname{div}_{\partial B_R}(v) + \langle  D_v \nu, \nu \rangle \right] \langle v , \nu \rangle -\langle   \langle v, \nu \rangle \nu , (D_v) \nu \rangle \, \mathrm{d}S \\
&=-\int_{\partial B_R}  \frac{(n-1) \langle v, \nu \rangle^2}{R}\, \mathrm{d}S - \int_{ \partial B_R}    \langle  D_v \nu, \nu \rangle  \langle v , \nu \rangle - \langle v, \nu \rangle \langle    \nu , (D_v) \nu \rangle \, \mathrm{d}S\\
&=-\int_{\partial B_R}  \frac{(n-1) \langle v, \nu \rangle^2}{R}\, \mathrm{d}S.
\end{align*}
Thus, it finally holds
\begin{align*}
\ddot{\lambda}_1^\alpha(0)  =&2 u^2(R) \alpha \left[-  \lambda_1^\alpha(0) +2K-\alpha^2 \right] \int_{ \partial B_R}    \langle v, \nu \rangle^2  \, \mathrm{d}S  \\
&- 2 \alpha^2 u^2(R) \frac{n-1}{R} \int_{\partial B_R} \langle v, \nu \rangle^2 \, \mathrm{d}S +\alpha u^2(R) \ddot{S}(0)  -2\mathcal{Q}(u')   \\
=&2 u^2(R) \alpha K \int_{ \partial B_R}    \langle v, \nu \rangle^2   \mathrm{d}S +\alpha u^2(R) \ddot{S}(0)  -2\mathcal{Q}(u').
\end{align*}
\end{proof}




\end{appendices}


\bibliography{sn-bibliography}

\end{document}